\begin{document}

\setlength{\parindent}{0pt} \setlength{\parskip}{1ex}

\newtheorem{Le}{Lemma}[section]
\newtheorem{Th}[Le]{Theorem}
\newtheorem{Thm}{Theorem}

\theoremstyle{definition}
\newtheorem{Def}[Le]{Definition}
\newtheorem{DaL}[Le]{Definition and Lemma}
\newtheorem{Conv}[Le]{Convention}
\newtheorem{Assumpt}[Le]{Assumption}

\theoremstyle{remark}
\newtheorem{Rem}[Le]{Remark}

\title[Exceptional holonomy and Aloff-Wallach spaces]{Exceptional
holonomy and Einstein metrics constructed from Aloff-Wallach
spaces}
\author{Frank Reidegeld}
\address{Fachbereich Mathematik, Bereich Analysis und Differentialgeometrie, Universit\"at Hamburg, Bundesstra\ss e 55, 20146
Hamburg, Germany} \email{frank.reidegeld\char"40
math.uni-hamburg.de} \subjclass[2000]{Primary 53C25, Secondary
53C29, 53C44. \\
\parbox{9pt}{$ $} This work was supported by the SFB 676 of the Deutsche
Forschungsgemeinschaft.\\}

\begin{abstract}
We investigate cohomogeneity-one metrics whose principal orbit is
an Aloff-Wallach space $SU(3)/U(1)$. In particular, we are
interested in metrics whose holonomy is contained in Spin($7$).
Complete metrics of this kind which are not product metrics have
exactly one singular orbit. We prove classification results for
metrics on tubular neighborhoods of various singular orbits. Since
the equation for the holonomy reduction has only few explicit
solutions, we make use of power series techniques. In order to
prove the convergence and the smoothness near the singular orbit,
we apply methods developed by Eschenburg and Wang. As a by-product
of these methods, we find many new examples of Einstein metrics of
cohomogeneity one.
\end{abstract}

\maketitle

\section{Introduction}

Metrics with holonomy Spin($7$) are an active area of research in
differential geometry and in superstring theory (see Acharya,
Gukov \cite{Ach}). Most of the explicitly known examples (see
Bazaikin \cite{Baz}, \cite{Baz1}; Bazaikin, Malkovich \cite{Baz2};
Bryant, Salamon \cite{BrS}; Cveti\v{c} et al. \cite{Cve02},
\cite{Cve}; Gukov, Sparks \cite{Guk}; Kanno, Yasui \cite{KanI},
\cite{KanII}) are of cohomogeneity one. The advantage of
cohomogeneity one metrics is that the equation for the holonomy
reduction is equivalent to a system of first-order ordinary
differential equations. Among those metrics, the metrics with an
Aloff-Wallach space as the principal orbit are an interesting
subclass.

An Aloff-Wallach space is a coset space
$N^{k,l}:=SU(3)/U(1)_{k,l}$ where $U(1)_{k,l}$ $:=
\{\text{diag}(e^{kit},e^{lit},e^{-i(k+l)t})|t\in \mathbb{R}\}$.
The spaces $N^{1,0}$ and $N^{1,1}$ have a different geometry as
the other ones and are called exceptional Aloff-Wallach spaces.
Any principal orbit of a manifold carrying a parallel
cohomogeneity-one Spin($7$)-structure, is equipped with a
cocalibrated homogeneous $G_2$-structure. We therefore will
describe how a connected component of the space of all
cocalibrated $SU(3)$-invariant $G_2$-structures on $N^{k,l}$ looks
like. This problem can be solved by means of representation
theory. After that we are able to deduce a system of ordinary
differential equations which is equivalent to the holonomy
reduction. For reasons of simplicity, we carry out this program
only for metrics which are diagonal with respect to a certain
basis.

If a cohomogeneity-one metric with holonomy Spin($7$) or a smaller
group is complete and not a product, it has exactly one singular
orbit. We therefore fix the initial values of our differential
equations at the singular orbit. Unfortunately, these initial
value problems have an explicit solution only in some special
cases \cite{Baz}, \cite{Baz2}, \cite{Cve}, \cite{Guk}.
Furthermore, the equations for the holonomy reduction degenerate
near the singular orbit. More precisely, some of the summands
behave like $\tfrac{0}{0}$. We therefore cannot apply the theorem
of Picard-Lindel\"of. In the literature \cite{Cve}, \cite{KanI},
\cite{KanII} there are indeed examples, where the solutions do not
only depend on the metric on the singular orbit but also on an
initial condition of second or third order which can be chosen
freely.

In order to solve these problems, we make a power series ansatz
for the metric. We have to check if the power series converges and
how many free parameters of higher order there are. Another
problem is that not any solution of the differential equations
corresponds to a metric which can be smoothly extended to the
singular orbit. There are certain smoothness conditions which have
to be satisfied and are in some cases a serious obstacle. If for
example the principal orbit is $N^{1,1}$ and the singular orbit is
$SU(3)/U(1)^2$, we have to replace the principal orbit by a
quotient $N^{1,1}/\mathbb{Z}_2$ in order to satisfy the smoothness
conditions.

The above problems were addressed by Eschenburg and Wang
\cite{Esch} in the context of cohomogeneity-one Einstein metrics.
Although metrics with exceptional holonomy are Ricci-flat and thus
Einstein metrics, we have to adapt the methods of \cite{Esch} to
our situation. After that we are finally able to prove the
existence of metrics whose holonomy is a subgroup of Spin($7$) on
a tubular neighborhood of the singular orbit. At this point we are
able to apply the main theorem of Eschenburg and Wang \cite{Esch}
and can also show the existence of Einstein metrics of
cohomogeneity one. With the exception of an $SU(3)$-invariant
Einstein metric on $\mathbb{HP}^2$ which can be found in
P\"uttmann, Rigas \cite{Puett}, all of the Einstein metrics are to
the best knowledge of the author new.

The main results of the article can be summarized as follows. We
find a two-parameter family of smooth non-homothetic
cohomogeneity-one metrics with holonomy a subgroup of Spin($7$).
All of them have $N^{1,1}/\mathbb{Z}_2$ as principal orbit and
$SU(3)/U(1)^2$ as singular orbit. Among them, there is a
one-parameter family of metrics with holonomy $SU(4)$. At the
border of the moduli space the metric converges to the Calabi
metric (see \cite{Calabi})on $T^{\ast} \mathbb{CP}^2$, which has
holonomy $Sp(2)$. First evidence for the above metrics can be
found in Kanno, Yasui \cite{KanII}. The two-parameter family and
the metrics with holonomy $SU(4)$ were investigated
independently of the author by Bazaikin and Malkovich
\cite{Baz}, \cite{Baz2}.

In \cite{Cve}, \cite{KanI} and \cite{KanII}, metrics with holonomy
Spin($7$) are constructed by numerical methods. One family of
metrics has principal orbit $N^{1,0}$ and singular orbit $S^5$.
Another family has an arbitrary $N^{k,l}$ with $(k,l)\neq(1,-1)$
as principal orbit and $\mathbb{CP}^2$ as singular orbit. These
are the metrics from \cite{Cve} and \cite{KanI} which depend on a
free parameter of third order and which we have already mentioned
above. The space on which the metrics are defined is an
$\mathbb{R}^4/\mathbb{Z}_{|k + l|}$-bundle over $\mathbb{CP}^2$
and thus an orbifold. If the principal orbit is the exceptional
Aloff-Wallach space $N^{1,1}$, there are further metrics with
singular orbit $\mathbb{CP}^2$ which are not of the above kind.
Examples of these metrics can be found in \cite{KanII}. There
exists a further two-parameter family of non-homothetic metrics
with that orbit structure which was found by Bazaikin \cite{Baz1}.
We also construct this metrics and show that the family can be
parameterized by two free parameters of third order.

Moreover, we prove for all of our cases with the help of the
methods of Eschenburg and Wang \cite{Esch} that there are no other
free parameters except the known ones, that the smoothness
conditions are satisfied and that the power series solutions
converge. We also prove that under certain conditions, for example
that the metric is diagonal, there are no further cohomogeneity
one metrics whose holonomy is contained in Spin($7$). Finally, we
prove that the holonomy of the metrics whose principal orbit is
not $N^{1,1}$ is all of Spin($7$).

The article is organized as follows. In the second section, we
collect some basic facts on $G_2$- and Spin($7$)-structures. In
Section \ref{CohomOneSection}, cohomogeneity-one manifolds and the
methods of Eschenburg and Wang \cite{Esch} are introduced. The
fourth section deals with the geometry of the Aloff-Wallach
spaces. Our metrics with cohomogeneity one are constructed in the
remaining three sections. In each section, we deal with metrics
which have one particular singular orbit. The fifth section is
about metrics with singular orbit $SU(3)/U(1)^2$, the sixth about
$S^5$, and the final section is about metrics with $\mathbb{CP}^2$
as singular orbit.

\textbf{Acknowledgements:} The article is based on the doctoral
thesis of the author \cite{ReiDiss}. I want to thank Lorenz
Schwachh\"ofer for providing me the subject of my thesis and his
many helpful advices as well as Yaroslav Bazaikin and Eugene
Malkovich for numerous interesting discussions.

\section{$G_2$- and $\text{Spin}(7)$-structures}

In this section, we define some terms which we will often use
later on. Let $(dx^1,\ldots,dx^7)$ be the standard basis of
one-forms on $\mathbb{R}^7$. Furthermore, let

\begin{equation}
\label{omega}
\omega := dx^{123}+dx^{145}-dx^{167}+dx^{246}+dx^{257}+dx^{347}-dx^{356}\:,
\end{equation}

where $dx^{i_1i_2\ldots i_k}$ denotes $dx^{i_1}\wedge\ldots\wedge dx^{i_k}$. The Hodge dual of $\omega$ is

\begin{equation}
\label{astomega} \ast\omega =
-dx^{1247}+dx^{1256}+dx^{1346}+dx^{1357}-dx^{2345}+dx^{2367}
+dx^{4567}\:.
\end{equation}

We supplement $(dx^1,\ldots,dx^7)$ with $dx^0$ to a basis of
one-forms on $\mathbb{R}^8$ and define

\begin{equation}
\label{Omega}
\begin{split}
\Omega & := \ast\omega+dx^0\wedge\omega\\
& = dx^{0123}+dx^{0145}-dx^{0167}+dx^{0246}+dx^{0257}+dx^{0347}-dx^{0356}\\
&\quad
-dx^{1247}+dx^{1256}+dx^{1346}+dx^{1357}-dx^{2345}+dx^{2367}
+dx^{4567}\:.\\
\end{split}
\end{equation}

A \emph{$G_2$-structure} on a seven-dimensional manifold is a
three-form which can be identified via local frames with $\omega$.
Analogously, a \emph{Spin($7$)-structure} on an eight-dimensional
manifold is a four-form which can be identified at each point with
$\Omega$. To any $G_2$-structure or Spin($7$)-structure we can
associate a canonical metric $g$ and an orientation. We call a
$G_2$-structure or Spin($7$)-structure $\omega$ or $\Omega$
\emph{parallel} if $\nabla^g\omega=0$ or $\nabla^g \Omega=0$. A
pair $(N,\omega)$ or $(M,\Omega)$ of a seven-dimensional or
eight-dimensional manifold and a parallel $G_2$-structure or
Spin($7$)-structure is called a \emph{$G_2$-manifold or
Spin($7$)-manifold}. Those manifolds have the following
interesting properties.

\begin{Th} (See Bonan \cite{Bon}; Fern\'andez, Gray \cite{Fer}; Fern\'andez \cite{Fer2})
\begin{enumerate}
    \item Let $N$ be a seven-dimensional manifold with a $G_2$-structure $\omega$. Then, the following statements on $\omega$
    are equivalent.
    \begin{enumerate}
        \item $\omega$ is parallel.
        \item $d\omega=d\ast\omega=0$.
        \item The holonomy of the associated metric $g$ is contained in $G_2$.
    \end{enumerate}
    Conversely, if $(N,g)$ is a Riemannian manifold with holonomy a subgroup of $G_2$, then there exists a parallel
    $G_2$-structure on $N$ such that its associated metric is $g$. If any of the above conditions is satisfied, $g$ is
    Ricci-flat.
    \item Let $M$ be an eight-dimensional manifold with a Spin($7$)-structure $\Omega$. Then, the following statements on
    $\Omega$ are equivalent.
    \begin{enumerate}
        \item $\Omega$ is parallel.
        \item $d\Omega=0$.
        \item The holonomy of the associated metric $g$ is contained in Spin($7$).
    \end{enumerate}
    Conversely, if $(M,g)$ is a Riemannian manifold with holonomy a subgroup of Spin($7$), then there exists a parallel
    Spin($7$)-structure on $N$ such that its associated metric is $g$. If any of the above conditions is satisfied, $g$ is
    Ricci-flat.
\end{enumerate}
\end{Th}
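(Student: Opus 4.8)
The plan is to prove the equivalences in each of the two parts by a chain of implications, and then to handle the two "converse" statements and the Ricci-flatness as separate concluding steps. Since part (1) and part (2) are structurally identical, I will focus on the $G_2$ case; the Spin($7$) case is entirely parallel, with the single form $\Omega$ playing the role of the pair $(\omega,\ast\omega)$ and with the dimension count adjusted from $7$ to $8$.

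First I would establish the algebraic backbone: the stabilizer of $\omega\in\Lambda^3(\mathbb{R}^7)^{\ast}$ under $GL(7,\mathbb{R})$ is exactly $G_2$, and $G_2\subset SO(7)$ preserves the canonical metric $g$, the canonical orientation, and hence $\ast\omega$. This is what makes "$\omega$ can be identified with the model" a genuine reduction of the frame bundle to a $G_2$-subbundle, and it gives (a)$\Leftrightarrow$(c) almost for free: by the holonomy principle (parallel tensors correspond to holonomy-invariant tensors in a fixed frame), $\nabla^g\omega=0$ iff $\mathrm{Hol}(g)$ fixes $\omega$ iff $\mathrm{Hol}(g)\subseteq G_2$. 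For (a)$\Rightarrow$(b) I would simply note that $\nabla^g$ commutes with $\ast$ (Levi-Civita is metric and orientation-preserving), so $\nabla^g\omega=0$ forces $\nabla^g\!\ast\!\omega=0$, and then $d\omega$ and $d\!\ast\!\omega$ are obtained by antisymmetrizing $\nabla^g\omega$ and $\nabla^g\!\ast\!\omega$, hence vanish. The substantive implication is (b)$\Rightarrow$(a): $d\omega=0=d\!\ast\!\omega$ means the intrinsic torsion of the $G_2$-structure, which a priori lives in a $G_2$-representation $W\cong W_1\oplus W_7\oplus W_{14}\oplus W_{27}$, is constrained so that all components vanish. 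Concretely I would express $\nabla^g\omega$ in terms of the torsion forms $\tau_0,\tau_1,\tau_2,\tau_3$ appearing in $d\omega = \tau_0\,\ast\!\omega + 3\,\tau_1\wedge\omega + \ast\tau_3$ and $d\!\ast\!\omega = 4\,\tau_1\wedge\ast\omega + \tau_2\wedge\omega$, observe that $d\omega=0=d\!\ast\!\omega$ kills all four torsion forms, and conclude $\nabla^g\omega=0$. This Gray–Fernández classification of $G_2$-structures is the heart of the matter and the step I expect to be the main obstacle to write cleanly; in the paper it would be cited to Fernández–Gray \cite{Fer} rather than reproved.

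For the converse, suppose $\mathrm{Hol}(g)\subseteq G_2$. Fix a point $p$ and a frame; the $G_2$-invariant model form $\omega_0\in\Lambda^3 T_p^{\ast}N$ is fixed by $\mathrm{Hol}_p(g)$, so parallel transport is well-defined and yields a global parallel $3$-form $\omega$ which at every point is $GL$-equivalent to the model, i.e.\ a parallel $G_2$-structure; by construction its associated metric is $g$ (because $G_2\subset SO(7)$ and the $G_2$-invariant metric is unique up to scale, which is pinned down by the normalization of $\omega$). Finally, Ricci-flatness: once $\mathrm{Hol}(g)\subseteq G_2$, the curvature operator takes values in the Lie algebra $\mathfrak{g}_2\subset\mathfrak{so}(7)$; combining the first Bianchi identity with the representation-theoretic fact that $\mathfrak{g}_2$ acting on $\mathbb{R}^7$ leaves no room for a nonzero Ricci contraction — equivalently, that the space of curvature-type tensors valued in $\mathfrak{g}_2$ is the irreducible $\mathcal{R}(\mathfrak{g}_2)\cong W_{77}$ with vanishing Ricci trace — forces $\mathrm{Ric}(g)=0$. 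The Spin($7$) case proceeds identically: $d\Omega=0$ alone kills the intrinsic torsion (which lives in the $\mathrm{Spin}(7)$-module $\mathbb{R}^8\otimes\mathfrak{spin}(7)^{\perp}$, isomorphic to a single copy of the $8$-dimensional representation, so a single first-order equation suffices), the holonomy principle gives (a)$\Leftrightarrow$(c), the converse is by parallel transport of the model $4$-form, and Ricci-flatness follows because $\mathcal{R}(\mathfrak{spin}(7))$ again has no Ricci part.
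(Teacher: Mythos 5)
The paper does not prove this theorem; it states it as a known result and cites Bonan, Fern\'andez--Gray, and Fern\'andez for the proof. So there is no ``paper's own proof'' to compare against, and your proposal must be judged against the standard argument in those references --- which it essentially reproduces. Your overall skeleton is sound: the holonomy principle gives (a)$\Leftrightarrow$(c), the parallelism of $\ast$ gives (a)$\Rightarrow$(b), the Fern\'andez--Gray torsion classification gives the nontrivial implication (b)$\Rightarrow$(a), parallel transport of the model form gives the converse, and the curvature-space argument gives Ricci-flatness. You also correctly identify that the heart of the matter is the injectivity of the map from intrinsic torsion to $(d\omega,d\!\ast\!\omega)$ (resp.\ $d\Omega$), which is exactly the content of the cited Fern\'andez--Gray classification.

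One concrete inaccuracy: in the Spin($7$) case you assert that the intrinsic torsion module $\mathbb{R}^8\otimes\mathfrak{spin}(7)^{\perp}$ is ``isomorphic to a single copy of the $8$-dimensional representation.'' That is false. Since $\mathfrak{spin}(7)^{\perp}\subset\mathfrak{so}(8)$ has dimension $28-21=7$ (it is the module $\Lambda^2_7$), the tensor product is $56$-dimensional and decomposes under $\mathrm{Spin}(7)$ as $\mathcal{W}_1\oplus\mathcal{W}_2$ with $\dim\mathcal{W}_1=8$ and $\dim\mathcal{W}_2=48$; these are precisely the two Fern\'andez classes. The correct reason that $d\Omega=0$ alone kills the torsion is not that the torsion space is small, but that $\Lambda^5(\mathbb{R}^8)^{\ast}$ contains $\mathcal{W}_1\oplus\mathcal{W}_2$ as a submodule and the natural map from the torsion space into $\Lambda^5$ (given by $\tau\mapsto d\Omega$) is an injection onto it, so that no torsion component can hide from $d\Omega$ the way the $\mathcal{W}_{14}$-component of $G_2$-torsion hides from $d\omega$. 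With that correction the outline is a faithful rendition of the standard proof.
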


Finally, we introduce the following types of non-parallel $G_2$-structures which we will need later on.

\begin{Def}
A $G_2$-structure $\omega$ is called
\begin{enumerate}
    \item \emph{nearly parallel} if $d\omega=\lambda\ast\omega$ for a $\lambda\in\mathbb{R}\setminus\{0\}$.
    \item \emph{cocalibrated} if $d\ast\omega=0$.
\end{enumerate}
\end{Def}

\section{Cohomogeneity-one manifolds}
\label{CohomOneSection}

The set of all Spin($7$)-structures on a manifold $M$ does not define a vector subbundle of $\bigwedge^4 T^\ast M$.
Therefore, the condition $d\Omega=0$ should be considered as a non-linear partial differential equation. Explicit solutions of
this equation are hard to find. Among them are the first examples of complete metrics with holonomy Spin($7$) by Bryant
and Salamon \cite{BrS}. The existence of the non-explicit metrics of Bryant \cite{Br} and of Joyce \cite{Joy} can only be proven by
sophisticated analytic arguments. Our problem becomes a lot of simpler if we assume that $\Omega$ is of cohomogeneity one.

\begin{DaL} (Cf. Mostert \cite{Mostert} and references therein)
\begin{enumerate}
    \item Let $M$ be an $n$-dimensional connected manifold with a smooth action by a Lie group $G$. The action of $G$ is
    called a \emph{cohomogeneity-one action} if there exists an orbit with dimension $n-1$.

    \item An orbit $\mathcal{O}$ of a cohomogeneity-one action is called a \emph{principal orbit} if there is an open subset
    $U$ of $M$ with the following properties: $\mathcal{O}\subseteq U$ and $U$ is $G$-equivariantly diffeomorphic to
    $\mathcal{O}\times (-\epsilon,\epsilon)$, where $\epsilon>0$. It can be proven that this condition is equivalent to
    $\dim{\mathcal{O}} = n-1$. All principal orbits are $G$-equivariantly diffeomorphic to each other and the union of all
    principal orbits is an open dense subset of $M$.

     \item A Spin($7$)-manifold $(M,\Omega)$ is called \emph{of cohomogeneity one} if there exists a cohomogeneity-one
     action on $M$ which preserves $\Omega$ (and thus the associated metric).
\end{enumerate}
\end{DaL}

Since any Ricci-flat homogeneous metric is flat (see Alekseevskii,
Kimelfeld \cite{Alki}), spaces of cohomogeneity one are the most
symmetric manifolds which may admit metrics with exceptional
holonomy. For the following considerations, we fix some notation.

\begin{Conv} \label{CohomOneConv}
Let $G$ be a compact Lie group which acts with cohomogeneity one
on a Spin($7$)-manifold $(M,\Omega)$. The associated metric on $M$
we denote by $g$. We identify any orbit of $G$ with the quotient
of $G$ by the isotropy group. The principal orbit shall be $G/H$
and $G/K$ shall be a non-principal orbit. The union of all
principal orbits will be denoted by $M^0$. After conjugation, $H$
has to be a subgroup of $K$. The Lie algebras of $G$, $H$, and
$K$, we denote by $\mathfrak{g}$, $\mathfrak{h}$, and
$\mathfrak{k}$. Let $q$ be an auxiliary $\text{Ad}_K$-invariant
metric on $\mathfrak{g}$. We identify the tangent space of $G/H$
with the $q$-orthogonal complement $\mathfrak{m}$ of
$\mathfrak{h}$ in $\mathfrak{g}$. The tangent space of $G/K$ can
be identified with the complement $\mathfrak{p}$ of
$\mathfrak{k}$. We denote the normal space of the orbit $G/K$ by
$\mathfrak{p}^\perp$. On any cohomogeneity-one manifold, there
exists a geodesic which intersects all orbits perpendicularly. We
fix such a geodesic $\gamma$ and parameterize it by arclength. The
parameter of $\gamma$ we denote by $t$.
\end{Conv}

The following theorem of Mostert \cite{Mostert} gives us some information on the shape of $M$.

\begin{Th}
\begin{enumerate}
    \item If $G$ acts isometrically on a Riemannian cohomoge\-neity-one manifold $(M,g)$, $M/G$ is homeomorphic to the circle
    $S^1$, $[0,1]$, $[0,\infty)$, or $\mathbb{R}$. The inner points of $M/G$ correspond to principal orbits and the endpoints
    of the intervals to non-principal orbits.
    \item Let $G/H$ be a principal and $G/K$ be a non-principal orbit. The quotient $K/H$ is a sphere.
    \item Any sufficiently small tubular parameterize of the non-principal orbit $G/K$ is a disc bundle over
    $G/K$. The projection map maps a point $gH$  of the principal orbit to $gK$.
\end{enumerate}
\end{Th}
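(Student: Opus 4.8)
The plan is to reduce the three statements to the slice theorem together with the observation that a slice at a non-principal orbit carries a linear action of the isotropy group whose orbit space is a half-line. First I would fix a point $p$ on the geodesic $\gamma$ lying on a non-principal orbit $G/K$, and let $S$ be a slice at $p$, i.e.\ a $K$-invariant disc in the normal space $\mathfrak{p}^\perp$ on which $K$ acts linearly and orthogonally (via the isotropy representation). Since $\dim G/K \le n-1$ and the principal orbits have dimension $n-1$, the slice $S$ has dimension $n - \dim G/K \ge 1$; the $K$-action on $S$ is a linear action on a Euclidean ball, and because $G$ has cohomogeneity one the $K$-action on $S$ has cohomogeneity one as well, so its orbit space $S/K$ is an interval $[0,b)$ (radial coordinate), with $0$ the fixed point $p$ and the other radii giving $K$-orbits that are spheres $K/H$. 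This already proves statement (2): $K/H \cong S^{\dim S - 1}$ is a sphere (possibly a point, when the slice is one-dimensional). It also proves the local part of statement (3): by the slice theorem a $G$-invariant tubular neighborhood of $G/K$ is $G$-equivariantly diffeomorphic to the associated bundle $G \times_K S$, which is a disc bundle over $G/K$ with fiber $S$, and the bundle projection sends $[g, s] \mapsto gK$; in particular it sends $gH = [g, s_0]$ (for $s_0$ a unit vector) to $gK$, as claimed.

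Next I would prove statement (1) by analyzing the quotient map $\pi \colon M \to M/G$. The interior of $M/G$ (images of principal orbits) is a $1$-manifold: near a principal point the slice theorem gives a neighborhood $G \times_H (-\epsilon,\epsilon)$ mapping to $(-\epsilon,\epsilon)$, so the interior is a connected $1$-manifold, hence diffeomorphic to $S^1$ or to an open interval. The non-principal orbits correspond to the "boundary'' points, and by the slice analysis above each such point has a half-open-interval neighborhood $[0,b)$ in $M/G$. Thus $M/G$ is a connected $1$-manifold with boundary, possibly empty; the classification of such spaces gives exactly $S^1$, $[0,1]$, $[0,\infty)$, or $\mathbb{R}$ (the case of a closed bounded interval needing two boundary points, i.e.\ two non-principal orbits; one boundary point gives $[0,\infty)$; none gives $S^1$ or $\mathbb{R}$). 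Since $M$ is connected, $M/G$ is connected, and the classification is complete. I would also remark that $M$ is separable and locally compact, so no long-line pathologies arise, and that properness of the action (automatic since $G$ is compact) is what makes the slice theorem and the quotient topology well behaved.

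For statement (3) I would finish by noting that the global claim follows from the local one: any sufficiently small tubular neighborhood is $G$-equivariantly the associated disc bundle $G\times_K S \to G/K$ just described, and the formula for the projection on principal points, $gH \mapsto gK$, is exactly the restriction of $[g,s]\mapsto gK$ to the sphere subbundle. The main obstacle is the verification that the $K$-action on the slice $S$ really has cohomogeneity one and that its orbit space is the half-interval $[0,b)$ rather than something more complicated; this rests on the facts that the isotropy representation of $K$ on $\mathfrak{p}^\perp$ is orthogonal, that its orbits are the intersections of $S$ with round spheres centered at the origin, and that cohomogeneity is additive along the fibration $G\times_K S \to G/K$ so that $\dim S - 1 = \dim S/K \cdot 0$ forces $S/K$ to be one-dimensional — i.e.\ the cohomogeneity-one hypothesis on $G \curvearrowright M$ descends to a cohomogeneity-one hypothesis on $K \curvearrowright S$. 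Once that is in place, everything else is standard slice-theorem bookkeeping, and I would simply cite Mostert \cite{Mostert} and Bredon's treatment of compact transformation groups for the details.
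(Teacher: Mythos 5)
The paper states this theorem without proof, simply citing Mostert's paper, so your argument must be judged on its own merits rather than against a proof in the text. Your slice-theorem approach is the standard one (the proof found, e.g., in Bredon's compact transformation groups and in Mostert's original paper) and it is essentially correct: a tubular neighborhood of $G/K$ is $G$-equivariantly the disc bundle $G\times_K S$; the induced $K$-action on $S$ is linear orthogonal, and since $(G\times_K S)/G\cong S/K$ the cohomogeneity-one hypothesis on $M$ forces $S/K$ to be one-dimensional; hence the non-zero $K$-orbits, each contained in a round sphere and of the same dimension as that sphere, fill it out, giving $S/K\cong[0,b)$ and $K/H\cong S^{\dim S-1}$. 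Locally near a non-principal point $M/G$ therefore looks like $[0,b)$, near a principal point it looks like an open interval, and the classification of connected one-manifolds with boundary gives (1). The identification of the bundle projection as $[g,s]\mapsto gK$, restricting on the sphere subbundle to $gH\mapsto gK$, gives (3).

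Two small flaws, neither substantive. First, in the one-dimensional-slice (exceptional orbit) case you allow $K/H$ to be "possibly a point''; it cannot be, since $K=H$ would make $G/K$ a principal orbit. Rather, the orthogonal $K$-action on the one-dimensional slice must be nontrivial, hence by $\pm 1$, so $K/H\cong S^0$ — still a sphere under the convention the paper itself uses when it writes $K/H=S^0=\mathbb{Z}_2$ for exceptional orbits. Second, the sentence near the end about cohomogeneity being "additive along the fibration'' with "$\dim S - 1 = \dim S/K\cdot 0$'' is garbled; the clean statement you want is simply $(G\times_K S)/G\cong S/K$, which is what you use earlier anyway. With those corrections the argument is complete.
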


\begin{Rem}
$\text{Spin}(7)$-manifolds with certain kinds of singularities are
in issue in M-theory (see Acharya, Gukov \cite{Ach}). Therefore,
we also consider the cases where $M$ is not a manifold but an
orbifold. If $K/H$ is a quotient of a sphere by a discrete group
$\Gamma$, the tubular parameterize of $G/K$ is an
$\mathbb{R}^{\dim{K/H}+1}/\Gamma$-bundle over $G/K$ and $M$ thus
is an orbifold. In this section, we state our theorems for
manifolds only. Nevertheless, it is easily possible to adapt them
to the orbifold-case.
\end{Rem}

Since the volume of the metric on $K/H$ shrinks to zero as we approach the singular orbit, we will refer to $K/H$ as the
\emph{collapsing sphere} or if $\dim{K/H}=1$, as the \emph{collapsing circle}. We can restrict the topology of $M$ even
further. If $M/G=\mathbb{R}$, $(M,g)$ contains a complete geodesic which minimizes the length between any of its points.
It follows from the Cheeger-Gromoll splitting theorem that $M$ is a Riemannian product of $\mathbb{R}$ and a
seven-dimensional manifold. Since then the holonomy would be a subgroup of $G_2$, we will not consider this case. If
$M/G$ was $S^1$, the universal cover $\widetilde{M}$ would satisfy $\widetilde{M}/G=\mathbb{R}$. Therefore, we
exclude that case, too. If $M$ had two non-principal orbits, it would be compact.
Since it is Ricci-flat, all Killing vector fields are parallel and commute with each other. $G/H$ thus is a flat torus. It is
easy to see that $M$ has to be flat, too.

There are two kinds of non-principal orbits. If $K/H=S^0=\mathbb{Z}_2$, the orbit $G/K$ is called an \emph{exceptional orbit}.
Otherwise, it is a \emph{singular orbit}. If there is exactly one exceptional orbit, $M$ would be twofold covered by a space
$\widetilde{M}$ with $\widetilde{M}/G=\mathbb{R}$. Motivated by the above considerations, we assume from now on that
there is exactly one singular orbit and all other orbits are principal.

On any principal orbit, there exists a canonical $G_2$-structure $\omega$ which is related to $\Omega$ by
$\Omega:= \ast\omega + dt\wedge\omega$. The equation $d\Omega=0$ can be written in terms of $\omega$.

\begin{Th}
\label{HitchinThm} Let $G/H$ be a seven-dimensional homogeneous
space and $\omega$ be a $G$-invariant cocalibrated
$G_2$-structure on $G/H$. Then there exists an $\epsilon>0$ and a
one-parameter family $(\omega_t)_{t\in(-\epsilon,\epsilon)}$ of
$G$-invariant $G_2$-structures on $G/H$ such that the initial
value problem

\begin{eqnarray}
\label{Evol} \frac{\partial}{\partial t}\ast_{\scriptscriptstyle G/H} \omega_t & = &
d_{\scriptscriptstyle G/H}\omega_t\\ \label{InitialEvol} \omega_0
& = & \omega
\end{eqnarray}

has a unique solution on $G/H\times(-\epsilon,\epsilon)$. In the above formula, $\tfrac{\partial}{\partial t}$ denotes the Lie
derivative in $t$-direction. The index $G/H$ of $d$ and $\ast$ emphasizes that we consider the exterior derivative on $G/H$ instead of
$G/H\times (-\epsilon,\epsilon)$. If $\epsilon$ is sufficiently small, $\omega_t$ is for all $t\in (-\epsilon,\epsilon)$
a $G_2$-structure and we have $d_{\scriptscriptstyle G/H}\ast_{\scriptscriptstyle G/H} \omega_t=0$. The
four-form $\Omega:=\ast_{\scriptscriptstyle G/H}\omega+dt \wedge\omega$ is a $G$-invariant parallel
$\text{Spin}(7)$-structure on $G/H \times(-\epsilon,\epsilon)$.

Conversely, let $\Omega$ be a parallel Spin($7$)-structure
preserved by a cohomogeneity-one action of a Lie group $G$. We
identify the union of all principal orbits $G$-equivariantly with
$G/H\times I$, where the metric on $I$ is $dt^2$. In this
situation, the $G_2$-structures on the principal orbits are
cocalibrated and satisfy equation (\ref{Evol}).
\end{Th}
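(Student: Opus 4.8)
The statement is Hitchin's flow theorem for cocalibrated $G_2$-structures, and in the homogeneous setting it simplifies drastically because only finitely many invariant forms exist, so the evolution equation (\ref{Evol}) becomes an honest ordinary differential equation on a finite-dimensional vector space. First I would fix a reductive decomposition $\mathfrak{g}=\mathfrak{h}\oplus\mathfrak{m}$ (which exists because the invariant metric associated with $\omega$ makes $\mathrm{Ad}_H$ act orthogonally on $\mathfrak{m}$) and identify, as usual, the $G$-invariant $k$-forms on $G/H$ with the finite-dimensional space $V_k:=(\Lambda^k\mathfrak{m}^\ast)^{H}$ of $\mathrm{Ad}_H$-invariant elements. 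A $G$-invariant $3$-form lies in the open subset $U\subseteq V_3$ of those $\rho$ which are pointwise stable, i.e.\ a $G_2$-structure, and $\omega\in U$. For $\rho\in U$ let $g(\rho)$ be the induced metric; since this construction and the Hodge star are $G$-equivariant, $\ast_{\scriptscriptstyle G/H}\rho$ is again $G$-invariant, so $P\colon U\to V_4$, $P(\rho):=\ast_{\scriptscriptstyle G/H}\rho$, is a smooth map of finite-dimensional manifolds. Writing $\omega_t$ for the sought path in $U$ and applying the chain rule, equation (\ref{Evol}) is equivalent to $D_{\omega_t}P(\dot\omega_t)=d_{\scriptscriptstyle G/H}\omega_t$, where $D_\rho P\colon V_3\to V_4$ is the differential of $P$ and $d_{\scriptscriptstyle G/H}\colon V_3\to V_4$ is the (linear) exterior derivative on invariant forms.

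The crucial step, and the main obstacle, is to show that $D_\rho P$ is a linear isomorphism for every $\rho\in U$. First, $\dim V_3=\dim V_4$: the linear Hodge star of any fixed $\mathrm{Ad}_H$-invariant inner product on $\mathfrak{m}$ is an $H$-equivariant isomorphism $\Lambda^3\mathfrak{m}^\ast\to\Lambda^4\mathfrak{m}^\ast$ and hence restricts to an isomorphism of the invariant subspaces. Second, the pointwise linearisation of the non-linear operator $\rho\mapsto\ast_{g(\rho)}\rho$ at a $G_2$-structure is a linear isomorphism $\Lambda^3\to\Lambda^4$ of the full exterior powers — this is the standard algebraic fact underlying the theory of stable forms, and in particular this linearisation is injective. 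Restricting to the invariant subspace $V_3$ shows that $D_\rho P$ is injective, hence, by the dimension count, an isomorphism. Therefore $\rho\mapsto(D_\rho P)^{-1}$ is smooth on $U$, and (\ref{Evol}) is equivalent to the autonomous ordinary differential equation $\dot\omega_t=(D_{\omega_t}P)^{-1}\bigl(d_{\scriptscriptstyle G/H}\omega_t\bigr)=:F(\omega_t)$ on $U$ with smooth right-hand side $F$. By the theorem of Picard--Lindel\"of there is an $\epsilon>0$ and a unique solution $(\omega_t)_{t\in(-\epsilon,\epsilon)}$ with $\omega_0=\omega$; since $U$ is open and $t\mapsto\omega_t$ is continuous, after shrinking $\epsilon$ every $\omega_t$ lies in $U$, i.e.\ is a $G_2$-structure.

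Next I would check that cocalibratedness is preserved along the flow: differentiating and using (\ref{Evol}) gives $\partial_t\bigl(d_{\scriptscriptstyle G/H}\ast_{\scriptscriptstyle G/H}\omega_t\bigr)=d_{\scriptscriptstyle G/H}\bigl(\partial_t\ast_{\scriptscriptstyle G/H}\omega_t\bigr)=d_{\scriptscriptstyle G/H}d_{\scriptscriptstyle G/H}\omega_t=0$, so $d_{\scriptscriptstyle G/H}\ast_{\scriptscriptstyle G/H}\omega_t$ is constant in $t$ and equals $d_{\scriptscriptstyle G/H}\ast_{\scriptscriptstyle G/H}\omega=0$ by hypothesis. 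For the four-form $\Omega=\ast_{\scriptscriptstyle G/H}\omega_t+dt\wedge\omega_t$ on $G/H\times(-\epsilon,\epsilon)$, writing $d=d_{\scriptscriptstyle G/H}+dt\wedge\partial_t$ and using $dt\wedge dt=0$ one computes $d\Omega=d_{\scriptscriptstyle G/H}\ast_{\scriptscriptstyle G/H}\omega_t+dt\wedge\bigl(\partial_t\ast_{\scriptscriptstyle G/H}\omega_t-d_{\scriptscriptstyle G/H}\omega_t\bigr)$, which vanishes by the previous identity and by (\ref{Evol}). That $\Omega$ is pointwise of the algebraic type (\ref{Omega}) — hence a Spin($7$)-structure — is precisely the construction of Section 2 that builds a Spin($7$)-structure on $N\times\mathbb{R}$ from a $G_2$-structure on $N$, applied fibrewise with $N=G/H$; and $\Omega$ is $G$-invariant because each $\omega_t$ and $\ast_{\scriptscriptstyle G/H}\omega_t$ is. By the characterisation of parallel Spin($7$)-structures recalled in Section 2, $d\Omega=0$ implies that $\Omega$ is parallel.

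For the converse, identify the union $M^0$ of principal orbits $G$-equivariantly with $G/H\times I$ so that the unit normal field of the orbits is $\partial_t$; the metric is then $g_t+dt^2$ for a path of $G$-invariant metrics $g_t$ on $G/H$. Decomposing the ambient four-form along this product, $\Omega=\alpha_t+dt\wedge\beta_t$ with $\beta_t:=\iota_{\partial_t}\Omega$ and $\alpha_t$ the tangential part, both $G$-invariant forms on $G/H$ (invariance of $\beta_t$ and $\alpha_t$ follows because $G$ acts by isometries preserving the orbits and hence commuting with the flow of $\partial_t$). Since $\partial_t$ is a unit vector orthogonal to the slices and $\Omega$ is pointwise of type (\ref{Omega}), the same fibrewise model identification shows that $\beta_t$ is a $G_2$-structure on $G/H$ and that $\alpha_t=\ast_{\scriptscriptstyle G/H}\beta_t$. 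Finally $0=d\Omega=d_{\scriptscriptstyle G/H}\ast_{\scriptscriptstyle G/H}\beta_t+dt\wedge\bigl(\partial_t\ast_{\scriptscriptstyle G/H}\beta_t-d_{\scriptscriptstyle G/H}\beta_t\bigr)$, and comparing the part without $dt$ with the part containing $dt$ yields $d_{\scriptscriptstyle G/H}\ast_{\scriptscriptstyle G/H}\beta_t=0$ (so $\beta_t$ is cocalibrated) and $\partial_t\ast_{\scriptscriptstyle G/H}\beta_t=d_{\scriptscriptstyle G/H}\beta_t$, which is equation (\ref{Evol}).
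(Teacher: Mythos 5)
Your proposal is correct, and it is worth noting that the paper itself offers no proof of this theorem at all: it is stated and then, in the remark immediately following, attributed to Hitchin \cite{Hitch}, who treats the harder case of a general (not necessarily homogeneous) cocalibrated $G_2$-structure on a compact manifold via the volume functional and its gradient/Hamiltonian flow on a cohomology class. What you have written is therefore a genuinely different and, for the purposes of this paper, arguably more appropriate argument: you exploit the homogeneity to reduce (\ref{Evol}) to an autonomous ODE on the finite-dimensional space $V_3=(\Lambda^3\mathfrak{m}^\ast)^H$ and settle existence and uniqueness by Picard--Lindel\"of, which sidesteps the analytic subtleties of the general compact case (the equation is not parabolic, and the general existence statement is delicate). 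The paper's Remark following the theorem in fact alludes to exactly this reduction ("the equation (\ref{Evol}) is equivalent to a system of ordinary differential equations") without carrying it out. Your key step --- that the linearisation of $\rho\mapsto\ast_{g(\rho)}\rho$ at a $G_2$-form is an isomorphism of $\Lambda^3$ onto $\Lambda^4$ --- is the one place where you lean on an external fact rather than a computation; it is standard (on the irreducible summands $\Lambda^3_1\oplus\Lambda^3_7\oplus\Lambda^3_{27}$ the linearisation acts as $\ast$ composed with multiplication by $\tfrac43$, $1$, and $-1$ respectively), but a reader would appreciate either this formula or a precise reference. The remaining steps --- preservation of cocalibratedness by differentiating $d\ast\omega_t$ under the flow, the computation $d\Omega=d_{\scriptscriptstyle G/H}\ast\omega_t+dt\wedge(\partial_t\ast\omega_t-d_{\scriptscriptstyle G/H}\omega_t)$, and the converse via the pointwise $\text{Spin}(7)/G_2$ model adapted to the unit normal --- are all sound and match the standard treatment.
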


\begin{Rem}
\begin{enumerate}
    \item The above theorem was proven by Hitchin \cite{Hitch} for the more general case, where $\omega$ is a
    (not necessarily homogeneous) cocalibrated $G_2$-structure on a compact manifold.
    \item If $\omega$ is nearly parallel, the maximal solution of (\ref{Evol}) describes a cone over $G/H$.
    \item Since $(M,\Omega)$ is of cohomogeneity one, the equation (\ref{Evol}) is equivalent to a system of ordinary
    differential equations. In order to ensure that $M$ has the desired topology, we fix the
    initial conditions at the singular orbit $G/K$.
\end{enumerate}
\end{Rem}

Before we investigate the equation (\ref{Evol}), we have to choose the principal orbit. The following lemma answers the
question if $G/H$ admits a $G$-invariant $G_2$-structure.

\begin{Le} \label{G2HomLemma} (See \cite{Rei1}.)
Let $G/H$ be a homogeneous space such that $G$ acts effectively on $G/H$. Furthermore, let $p\in G/H$ be arbitrary. We
identify $H$ with its isotropy representation on $T_p G/H$ and $G_2$ with its seven-dimensional irreducible representation.
$G/H$ admits a $G$-invariant $G_2$-struc\-ture if and only if there exists a vector space isomorphism $\varphi: T_p G/H
\rightarrow \mathbb{R}^7$ such that $\varphi H \varphi^{-1} \subseteq G_2$.
\end{Le}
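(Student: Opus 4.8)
The plan is to translate the statement "$G/H$ admits a $G$-invariant $G_2$-structure" into a purely linear-algebraic statement about the isotropy representation, using the fact that the stabilizer of the model three-form $\omega\in\bigwedge^3(\mathbb{R}^7)^*$ under $GL(7,\mathbb{R})$ is exactly $G_2$. Recall that $G$ acts effectively, so $H$ embeds into $GL(T_pG/H)$ via the isotropy representation; fixing a basis identifies $GL(T_pG/H)$ with $GL(7,\mathbb{R})$. The key structural observation is that a $G$-invariant three-form on $G/H$ is the same datum as an $H$-invariant element of $\bigwedge^3 T_p^*G/H$ (by homogeneity, such a form extends uniquely and $G$-invariantly to all of $G/H$), and that it is a $G_2$-structure precisely when this three-form lies in the $GL$-orbit of $\omega$ at the point $p$, hence at every point by $G$-invariance.

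First I would prove the "if" direction. Suppose $\varphi\colon T_pG/H\to\mathbb{R}^7$ is a linear isomorphism with $\varphi H\varphi^{-1}\subseteq G_2$. Define $\omega_p:=\varphi^*\omega\in\bigwedge^3 T_p^*G/H$. For $h\in H$ acting on $T_pG/H$ by the isotropy representation, $h^*\omega_p=\varphi^*((\varphi h\varphi^{-1})^*\omega)=\varphi^*\omega=\omega_p$ since $\varphi h\varphi^{-1}\in G_2=\mathrm{Stab}_{GL(7)}(\omega)$. Thus $\omega_p$ is $H$-invariant and extends to a unique $G$-invariant three-form $\omega$ on $G/H$; pulling back $\varphi$ to a frame at each point shows $\omega$ is identified with the model form everywhere, i.e. is a $G$-invariant $G_2$-structure.

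Conversely, suppose $\omega$ is a $G$-invariant $G_2$-structure on $G/H$. At $p$ the three-form $\omega_p$ can be identified with the model form: there is a linear isomorphism $\varphi\colon T_pG/H\to\mathbb{R}^7$ with $\varphi^*\omega=\omega_p$, which is the definition of a $G_2$-structure. For $h\in H$, $G$-invariance of $\omega$ gives $h^*\omega_p=\omega_p$, whence $(\varphi h\varphi^{-1})^*\omega=\omega$, so $\varphi h\varphi^{-1}\in\mathrm{Stab}_{GL(7,\mathbb{R})}(\omega)=G_2$. Therefore $\varphi H\varphi^{-1}\subseteq G_2$. The main point that does real work here — and the only step that is not a formal bookkeeping exercise — is the identification $\mathrm{Stab}_{GL(7,\mathbb{R})}(\omega)=G_2$, which is the classical fact that $G_2$ is precisely the subgroup of $GL(7,\mathbb{R})$ fixing a generic (stable) three-form; I would either cite this (e.g. Bryant, or the reference \cite{Rei1} given) or note that it follows from $\omega$ being a generic three-form whose stabilizer is known to be the compact real form $G_2$. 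Everything else is the standard dictionary between $G$-invariant tensors on $G/H$ and $H$-invariant tensors on $T_pG/H$.
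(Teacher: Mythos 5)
Your proposal is correct, and it is the standard argument one would expect: use the bijection between $G$-invariant tensor fields on $G/H$ and $H$-invariant tensors on $T_pG/H$, together with the classical identification $\mathrm{Stab}_{GL(7,\mathbb{R})}(\omega)=G_2$ for the compact-form model three-form (\ref{omega}), and check both directions by pushing the invariance condition through the conjugation by $\varphi$. The paper itself does not reproduce a proof of this lemma but defers it to the reference \cite{Rei1}, so there is no in-paper proof to compare against; your argument is precisely the one that reference gives, and the computation $h^*\omega_p=\varphi^*\bigl((\varphi h\varphi^{-1})^*\omega\bigr)$ and its converse are handled correctly. One minor remark: the hypothesis that $G$ acts effectively is what makes the phrase \emph{identify $H$ with its isotropy representation} legitimate in the paper's setting (compact $G$), but your proof in fact works with the image of $H$ under the isotropy representation and so does not secretly depend on that identification being faithful.
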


Our next step is to describe the space of all $G$-invariant
$G_2$-structures on $G/H$ explicitly. This can be done in two
steps. Any $G$-invariant metric on $G/H$ can be identified via $q$
with an $H$-equivariant endomorphism of $\mathfrak{m}$. These
endomorphisms can be classified with the help of Schur's lemma. Since
any manifold which admits a $G_2$-structure is orientable and
an $SO(7)$-structure is the same as a metric and an orientation,
we have  classified all $G$-invariant $SO(7)$-structures on $G/H$. The
classification of all $G$-invariant $G_2$-structures whose
extension to an $SO(7)$-structure is fixed, can be done with the help
of the following lemma.

\begin{Le} \label{G2StrucSpace} (See \cite{Rei}.) Let $G/H$ be a seven-dimensional homogeneous space. We assume that $G$ acts effectively and
that $G/H$ admits a $G$-invariant $G_2$-structure. Let $\mathcal{G}$ be an arbitrary $G$-invariant $SO(7)$-structure on $G/H$.
The space of all $G$-invariant $G_2$-structures on $G/H$ whose extension to an $SO(7)$-structure is $\mathcal{G}$ is
diffeomorphic to

\begin{equation}
\text{Norm}_{SO(7)}H/\text{Norm}_{G_2}H\:.
\end{equation}

In the above formula, $H$ is identified with its isotropy representation and $G_2$ and $SO(7)$
with their seven-dimensional irreducible representation. The \emph{normalizer}
$\text{Norm}_L L'$ of a subgroup $L'\subseteq L$ is defined as $\{g\in L| gL'g^{-1}=L'\}$.
\end{Le}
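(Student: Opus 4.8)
The plan is to reduce the classification of $G$-invariant $G_2$-structures extending a fixed $SO(7)$-structure to an orbit space of compact groups, and then to identify that orbit space with the stated quotient of normalizers.

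First I would fix a point $p\in G/H$ and, using the given $SO(7)$-structure $\mathcal{G}$, identify $T_pG/H$ with $\mathbb{R}^7$ equipped with its standard inner product and orientation, in such a way that the isotropy representation of $H$ lands in $SO(7)$; that $H$ can be conjugated into $SO(7)$ is exactly the orientability/metric content of $\mathcal{G}$, and the hypothesis that a $G$-invariant $G_2$-structure exists guarantees (via Lemma \ref{G2HomLemma}) that after a further adjustment $H$ can even be conjugated into $G_2\subseteq SO(7)$. The key observation is that a $G$-invariant $G_2$-structure on $G/H$ whose associated $SO(7)$-structure is $\mathcal{G}$ is, at the point $p$, nothing but a $3$-form $\varphi$ on $\mathbb{R}^7$ that (i) is in the $GL(7,\mathbb{R})$-orbit of the model form $\omega$, (ii) induces the standard metric and orientation, i.e.\ lies in the $SO(7)$-orbit of $\omega$, and (iii) is fixed by the isotropy action of $H$, i.e.\ is $\mathrm{Ad}_H$-invariant. $G$-invariance of the whole structure is automatic once the value at $p$ is $H$-invariant, so the space we must describe is $\{S\cdot\omega : S\in SO(7),\ (\mathrm{Ad}h)(S\cdot\omega)=S\cdot\omega\ \forall h\in H\}$.

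Next I would use the fact that $SO(7)$ acts transitively on the set of $G_2$-structures compatible with the fixed metric and orientation, with stabilizer exactly $G_2$ (this is the standard description of the $SO(7)/G_2 = \mathbb{RP}^7$ of compatible $G_2$-structures). So the set of all compatible $G_2$-structures is $SO(7)/G_2$, and an element $S\cdot G_2$ is $H$-invariant precisely when $h S G_2 = S G_2$, i.e.\ $S^{-1}hS\in G_2$, for all $h\in H$. Writing $H\subseteq G_2$ from the start (using Lemma \ref{G2HomLemma}), the set of admissible $S$ is $\{S\in SO(7): S^{-1}HS\subseteq G_2\}$. Since $H$ is compact, $S^{-1}HS$ and $H$ are both maximal compact in the group they generate inside $G_2$ and are conjugate by an element of $G_2$ when contained in $G_2$ with the same dimension — more precisely, I would invoke that if $S^{-1}HS\subseteq G_2$ then $S^{-1}HS$ is $G_2$-conjugate to $H$ (conjugacy of isomorphic compact subgroups realized by the same abstract inclusion, via a standard averaging/rigidity argument), so there is $g\in G_2$ with $gS^{-1}HSg^{-1}=H$, hence $Sg^{-1}\in\mathrm{Norm}_{SO(7)}H$. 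This shows the admissible set is exactly $(\mathrm{Norm}_{SO(7)}H)\cdot G_2$, and the $G_2$-structures it parametrizes are $(\mathrm{Norm}_{SO(7)}H)\cdot G_2/G_2 \cong \mathrm{Norm}_{SO(7)}H/(\mathrm{Norm}_{SO(7)}H\cap G_2)$. Finally I would check $\mathrm{Norm}_{SO(7)}H\cap G_2=\mathrm{Norm}_{G_2}H$, which is immediate from the definitions, giving the claimed diffeomorphism $\mathrm{Norm}_{SO(7)}H/\mathrm{Norm}_{G_2}H$; smoothness of the identification follows because all maps involved are restrictions of the smooth orbit maps of the compact Lie group $SO(7)$.

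The main obstacle I anticipate is the conjugacy step: showing that whenever a conjugate $S^{-1}HS$ of the isotropy group lands inside $G_2$, it is already $G_2$-conjugate back to $H$ itself (not merely abstractly isomorphic to it). This requires knowing that the inclusion $H\hookrightarrow G_2$ is rigid up to $G_2$-conjugacy among all embeddings with the same image-up-to-conjugacy in $SO(7)$; for connected $H$ this is a consequence of the classical rigidity of homomorphisms of compact connected groups, but if $H$ is disconnected one must argue component-by-component or appeal to the precise structure of $H$ coming from the Aloff-Wallach setup. I would treat this carefully, possibly restricting to the cases of $H$ that actually occur (e.g.\ $U(1)_{k,l}$ and its relatives), where the embeddings into $G_2$ can be listed explicitly. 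Everything else — the identification of a $G_2$-structure with an $H$-fixed point on $SO(7)/G_2$, and the normalizer bookkeeping — is routine once this rigidity is in hand.
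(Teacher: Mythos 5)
The paper gives no proof of this lemma in the text; it is a black-box citation to \cite{Rei}, so there is no in-paper argument to compare against, and I can only judge your proposal on its own terms. Your reduction is the natural one: a $G$-invariant $G_2$-structure extending $\mathcal{G}$ is determined by its value at the base point, the compatible $3$-forms there form the orbit $SO(7)/G_2$, and invariance picks out the $\mathrm{Ad}_H$-fixed points. Your bookkeeping at the end is correct: the map $n\,\mathrm{Norm}_{G_2}H\mapsto nG_2$ is well defined, its image lies in the fixed-point set, its fibers are single cosets, and $\mathrm{Norm}_{SO(7)}H\cap G_2=\mathrm{Norm}_{G_2}H$.

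The genuine gap is exactly the one you flag as the main obstacle, and your proposed patch does not close it. Surjectivity of the normalizer map onto $(SO(7)/G_2)^H$ is equivalent to the statement that
$\{S\in SO(7):S^{-1}HS\subseteq G_2\}=\mathrm{Norm}_{SO(7)}H\cdot G_2$,
i.e.\ every $SO(7)$-conjugate of $H$ that lies inside $G_2$ is already $G_2$-conjugate to $H$. Your appeal to ``classical rigidity of homomorphisms of compact connected groups'' does not settle this: those rigidity theorems (Montgomery--Zippin and descendants) control continuous deformations of homomorphisms, not whether $SO(7)$-conjugacy between subgroups of $G_2$ is implemented by an element of $G_2$. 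Without this input you have shown only that $\mathrm{Norm}_{SO(7)}H/\mathrm{Norm}_{G_2}H$ injects into the set of invariant $G_2$-structures extending $\mathcal{G}$, not that it exhausts it. Your fall-back, treating the $H$ that actually occur case by case, is the right move and does work for $H=U(1)_{k,l}$: a circle in $G_2$ acts on $\mathbb{R}^7$ with weights $0,\pm a,\pm b,\pm c$, and membership in $G_2$ forces a sign choice with $a+b+c=0$; the Weyl group of $G_2$ (order $12$: $S_3$ on $(a,b,c)$ times simultaneous sign flip) then determines the $G_2$-conjugacy class from the unsigned weight multiset, so $SO(7)$-conjugate circles inside $G_2$ are $G_2$-conjugate. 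But that establishes the lemma only in those cases, not in the generality in which it is stated; for an arbitrary compact $H$ the conjugacy claim needs a separate argument, and until you supply one the proof is incomplete. A smaller omission is the step where you place $H$ inside $G_2$ compatibly with the arbitrarily chosen $\mathcal{G}$: the existence of some $G$-invariant $G_2$-structure gives $H\subseteq G_2$ for its own metric, and passing to the metric of $\mathcal{G}$ requires an $H$-equivariant rescaling (e.g.\ via the square root of the comparison endomorphism between the two $H$-invariant inner products), which should be said explicitly.
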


After having determined the space of all $G$-invariant
$G_2$-structures $\omega$ on $G/H$, we calculate $d\ast\omega$ and
have a description of the space of all cocalibrated invariant
$G_2$-structures. In some cases, we will not be able to describe
that space explicitly. In order to find examples of parallel
cohomogeneity-one $\text{Spin}(7)$-structures, it suffices to
construct a space of cocalibrated $G_2$-structures which is
invariant under equation (\ref{Evol}).

Not any solution of (\ref{Evol}) corresponds to a metric with
holonomy $\text{Spin}(7)$. The reason for this is that $\Omega$
has not automatically a smooth extension to the singular orbit.
This is the case only if certain smoothness conditions are satisfied,
which we will describe in detail.

We split the tangent space of $M$ at a point $p\in G/K$
into the $K$-modules $\mathfrak{p}$ and $\mathfrak{p}^\perp$.
The orbits of the $K$-action on $\mathfrak{p}^\perp$ except $\{0\}$
are spheres of type $K/H$. Let $\mathcal{B}$ be a vector bundle
over the union of all principal orbits which admits a $K$-action
on the fibers. For reasons of simplicity we assume that there exist
non-negative numbers $s_1$ and $s_2$ such that the fibers of
$\mathcal{B}$ are contained in $\bigotimes^{s_1} TM \otimes
\bigotimes^{s_2} T^\ast M$. Moreover, let $\rho$ be a
$G$-invariant section of $\mathcal{B}$. Since $G/K$ is
homogeneous, $\rho$ is determined by its values at $p$.

Let $\gamma$  be a geodesic which intersects all orbits
perpendicularly. We assume that $\gamma(0)\in G/K$. Since the
action of $G$ on $\gamma$ generates all of $M$, it suffices to
consider $\rho$ along $\gamma$ only. The metric $g$ is Ricci-flat.
It is well-known (see DeTurck, Kazdan \cite{DeKa}) that any
Einstein metric is analytic. We therefore assume that $\rho$ is a
power series with respect to $t$. The $m^{th}$ derivative of
$\rho$ in the vertical direction can be considered as a map, which
assigns to a tuple $(v_1,\ldots,v_m)\in \mathfrak{p}^\perp$ an
element of the fiber $\mathcal{B}_p$. This map can be extended to
a map $S^m(\mathfrak{p}^\perp) \rightarrow \mathcal{B}_p$, where
$S^m(\mathfrak{p}^\perp)$ denotes the $m^{th}$ symmetric power of
$\mathfrak{p}^\perp$. Since $\rho$ is analytic, the sequence of
those maps determines $\rho$. If $\rho$ has a smooth extension to
the singular orbit, the above maps are $K$-equivariant.
Conversely, we have

\begin{Th} \label{SmoothExtension} (See Eschenburg, Wang \cite{Esch}.)
Let $(M,g)$ be Riemannian manifold with an isometric action of
cohomogeneity one by a Lie group $G$. We assume that there is a
singular orbit $G/K$. Let $\mathcal{B}\subseteq \bigotimes^{s_1}
TM\otimes \bigotimes^{s_2} T^\ast M$ be a vector bundle over $M$
whose fibers at the singular orbit are $K$-equivariantly
isomorphic to a $K$-module $B$. Let $r:(0,\varepsilon)\rightarrow
B$, where $\varepsilon>0$, be a real analytic map with Taylor
expansion $\sum_{m=1}^\infty r_m t^m$. We can identify $r$ with a
tensor field $\rho$ along a geodesic $\gamma$ which intersects all
orbits perpendicularly. By the action of $G$, we can extend $\rho$
to the union of all principal orbits. $\rho$ is well-defined and
has a smooth extension to the singular orbit if and only if

\begin{equation}
r_m\in \imath_m(W_m)\quad\forall m\in\mathbb{N}_0\:.
\end{equation}

In the above formula, $W_m$ denotes the space of all $K$-equivariant maps

\begin{equation}
W_m:=\{P:S^m(\mathfrak{p}^\perp)\rightarrow B |P\:\text{is linear and $K$-equivariant}\}
\end{equation}

and $\imath_m$ is the evaluation map

\begin{equation}
\begin{split}
\imath_m & :W_m\rightarrow B\\ \imath_m(P) & :=
P(\gamma'(0))\:.\\
\end{split}
\end{equation}
\end{Th}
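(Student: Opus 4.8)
The plan is to reduce the statement to a local computation in a slice and then to Taylor-expand equivariantly. First I would fix the geodesic $\gamma$ with $\gamma(0)\in G/K$ and introduce geodesic normal coordinates on the normal bundle of the singular orbit: the exponential map of $G/K$ identifies a tubular neighbourhood with the total space of the normal bundle $\mathfrak{p}^\perp \to G/K$, on which $G$ acts and $K$ acts linearly and orthogonally on the fibre $\mathfrak{p}^\perp$. Because $\rho$ is $G$-invariant, everything is determined by its restriction to a single fibre $\mathfrak{p}^\perp_p$ over $p=\gamma(0)$, and along $\gamma$ this restriction is exactly the analytic curve $r(t)=\sum_{m\ge 1} r_m t^m$ with values in $B=\mathcal{B}_p$. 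The point is that smoothness of $\rho$ at the singular orbit is equivalent to smoothness, at the origin, of the $K$-equivariant map $\mathfrak{p}^\perp_p \to B$ obtained by using the $K$-action to spread $r$ over the whole fibre; i.e. $v\mapsto$ (value of $\rho$ at $\exp_p(v)$, transported back to $\mathcal{B}_p$).

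The second step is the classical fact, essentially a version of Whitney's extension / the theory of smooth functions on representations, that a function on a Euclidean space $V$ which is smooth away from $0$ and extends continuously, extends smoothly at $0$ if and only if it has an asymptotic Taylor expansion $\sum_m h_m$ with each $h_m$ a homogeneous polynomial of degree $m$ on $V$ with values in $B$. In our situation the function is $K$-equivariant (it is built from a $G$-invariant section via the $K$-action on the slice), so by averaging each homogeneous term over $K$ we may take each $h_m$ to be a $K$-equivariant homogeneous polynomial $S^m(\mathfrak{p}^\perp)\to B$, that is, an element of $W_m$. Restricting $h_m$ to the ray $\mathbb{R}\gamma'(0)$ recovers $r_m t^m$, i.e. $r_m=h_m(\gamma'(0))=\imath_m(h_m)\in \imath_m(W_m)$. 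This gives the necessity of the condition. For sufficiency, given $r_m\in\imath_m(W_m)$ choose $P_m\in W_m$ with $\imath_m(P_m)=r_m$; the formal series $\sum_m P_m$ is then a $K$-equivariant formal power series on the slice restricting to $r$ along $\gamma$, and by the equivariant Borel/Whitney lemma there is a genuine smooth $K$-equivariant function on the slice with this expansion, which — since a $K$-equivariant function on the slice is the same datum as a $G$-invariant section near $G/K$, and since two analytic functions on $(0,\varepsilon)$ with the same Taylor series agree — must be the smooth extension of $\rho$.

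A few technical points need care. One must check that a $G$-invariant section of $\mathcal{B}$ defined only over $M^0$ is genuinely recovered from its restriction to one slice: this uses that $G\cdot\gamma = M$ together with the isotropy $H$ of the principal orbit fixing $\rho(\gamma(t))$ for $t\neq 0$, which forces the slice data to be $K$-equivariant once extended across $0$. One must also identify the fibre $\mathcal{B}_{\exp_p(v)}$ with $B=\mathcal{B}_p$ compatibly with the $K$-action; here the hypothesis $\mathcal{B}\subseteq\bigotimes^{s_1}TM\otimes\bigotimes^{s_2}T^\ast M$ is used, since parallel transport (or the canonical trivialization along rays in the slice) provides such an identification and the tensor nature makes it $K$-equivariant. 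Finally, analyticity of $g$, hence of $\rho$, is what guarantees $r$ is analytic and that its Taylor series determines it — this is the input from DeTurck–Kazdan \cite{DeKa} already quoted.

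The main obstacle I expect is the passage from ``smooth away from $0$ plus continuous at $0$'' to ``has an asymptotic Taylor expansion by homogeneous polynomials,'' made $K$-equivariant: one has to be careful that smoothness at the singular orbit of the section $\rho$ on $M$ — not merely of the one-variable curve $r$ — is correctly encoded by the slice representation, and that the homogeneous Taylor terms can be taken $K$-equivariant and can be realized by honest elements of $W_m$ without losing information encoded in $r_m$. Once the correspondence ``$G$-invariant sections near $G/K$'' $\leftrightarrow$ ``$K$-equivariant germs on the slice'' is set up cleanly, the rest is the standard equivariant version of Whitney's and Borel's theorems, which I would cite rather than reprove.
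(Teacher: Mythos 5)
Note first that the paper itself gives no proof of this statement: it is quoted with the attribution ``(See Eschenburg, Wang \cite{Esch}.)''\ and then used, so there is no proof in the paper against which to compare your argument, and it must be judged on its own. Your setup is the right one and coincides with Eschenburg--Wang's: pass to the slice at $p=\gamma(0)$, identify a $G$-invariant section of $\mathcal{B}$ near $G/K$ with a $K$-equivariant $B$-valued map on a ball in $\mathfrak{p}^\perp$, and observe that the degree-$m$ Taylor term at $0$ of a smooth $K$-equivariant map is a $K$-equivariant homogeneous polynomial $S^m(\mathfrak{p}^\perp)\to B$, whose restriction to the ray $\mathbb{R}\gamma'(0)$ is $r_m t^m$. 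This gives the \emph{only if} direction cleanly. It also gives, implicitly, that $\imath_m$ is \emph{injective} (since $K$ is transitive on the unit sphere of $\mathfrak{p}^\perp$, the vectors $v^{\otimes m}$ with $|v|=1$ span $S^m(\mathfrak{p}^\perp)$, so a $K$-equivariant $P\in W_m$ is determined by $P(\gamma'(0)^{\otimes m})$); you should say this explicitly, because it is what makes the $P_m$ in the converse canonically determined by $r_m$ rather than merely chosen.

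The \emph{if} direction, as you have written it, has a genuine gap. The criterion you invoke --- that a function smooth on $V\setminus\{0\}$, continuous at $0$, admitting an asymptotic expansion by homogeneous polynomials, extends smoothly across $0$ --- is false as stated: an asymptotic expansion of $f$ alone controls $f$, not its partial derivatives, and there are standard counterexamples. Your fallback through the equivariant Borel lemma does not close this: Borel produces \emph{some} smooth $K$-equivariant $\tilde\rho$ with the prescribed formal Taylor series, but $\tilde\rho$ need not be real analytic, so ``two analytic one-variable functions with the same Taylor series agree'' does not apply, and you cannot conclude $\tilde\rho|_\gamma=r$ nor that $\tilde\rho$ extends the given $\rho$. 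The analyticity hypothesis on $r$ must enter the sufficiency proof more substantively. One clean route: by injectivity of $\imath_m$, the elements $P_m\in W_m$ with $\imath_m(P_m)=r_m$ are unique; $K$-transitivity on the unit sphere gives $|P_m(v^{\otimes m})|=|r_m|$ for all $|v|=1$, and the polarization identity then yields an operator bound $\|P_m\|\le C^m|r_m|$ with $C$ independent of $m$; analyticity of $r$ gives $|r_m|\le C' M^m$; hence $\sum_m P_m$ converges absolutely on a smaller ball to a real-analytic $K$-equivariant function, which restricts to $r$ along $\gamma$ and is therefore the desired smooth (indeed analytic) extension of $\rho$. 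With that replacement the argument is sound.
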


In the following, we restrict ourselves to metrics with no "mixed coefficients", i.e.

\begin{equation}
\label{WEschAssumption1}
g\in S^2(\mathfrak{p}) \oplus S^2(\mathfrak{p}^\perp)\:.
\end{equation}

Instead of $W_m$, it suffices to study the spaces

\begin{equation}
\begin{split}
W^h_m & :=\{P:S^m(\mathfrak{p}^\perp)\rightarrow S^2(\mathfrak{p}) |P\:\text{is linear and $K$-equivariant}\}\quad\text{and} \\
W^v_m & :=\{P:S^m(\mathfrak{p}^\perp)\rightarrow S^2(\mathfrak{p}^\perp) |P\:\text{is linear and $K$-equivariant}\} \\
\end{split}
\end{equation}

in order to prove the smoothness. The reason for assumption (\ref{WEschAssumption1}) is that later on we need a result which
is proven only if (\ref{WEschAssumption1}) is satisfied.

We often write our metric as $g_t + dt^2$ where $g_t\in S^2(\mathfrak{m})$ is the restriction of $g$
to a principal orbit. Since $t$ can be considered as a distance function on $\mathfrak{p}^{\perp}$ and
$g_t|_{\mathfrak{p}^\perp\times\mathfrak{p}^\perp}$ describes the metric on the collapsing sphere,
$\mathfrak{p}^\perp$ is equipped with "polar" rather than "Euclidean" coordinates. We therefore have
to modify Theorem \ref{SmoothExtension} in order to fit our needs.

\begin{Rem}
\label{SmoothnessRemark}
\begin{enumerate}
    \item By the choice of our coordinates we have fixed $\|\tfrac{\partial}{\partial t}\|=1$ and $g(\tfrac{\partial}{
    \partial t},v)=0$ for all $v\in\mathfrak{m}$. The degrees of freedom for the higher derivatives of the vertical part of
    $g$ will therefore seem to be fewer as Theorem \ref{SmoothExtension} predicts.
    \item Let $v$ be a tangent vector of the collapsing sphere $K/H$. The metric on $K/H$ has to approach the round metric of
    a sphere of radius $t$. This condition fixes the value of $\tfrac{\partial}{\partial t}|_{t=0} g_t(v,v)^{\tfrac{1}{2}}$.
    If $K/H$ is a sphere, we can compute this value with the help of the fact that the length of any great circle on $K/H$ has to be
    $2\pi t + O(t^2)$ for $t\rightarrow 0$. If $K/H$ is a quotient of a sphere by a discrete group, we can use the estimate
    $\tfrac{1}{t} + O(1)$ for the sectional curvature. The above statements are in fact equivalent to the smoothness condition
    of $0^{th}$ order for the vertical part.
    \item Since the length of $v$ shrinks to zero, any statement on the $m^{th}$ derivative of $g$ in the vertical direction
    translates into a statement on $\tfrac{\partial^m}{\partial t^m}|_{t=0} \tfrac{1}{t} g_t(v,v)^{\tfrac{1}{2}}$.  Because of
    l'H\^{o}pital's rule this is essentially a statement on the $(m+1)^{st}$ derivative of $g_t$.
\end{enumerate}
\end{Rem}

We assume that $(M^0,\Omega)$ is of holonomy Spin($7$) and that the metric $g$ has a smooth extension to the singular orbit.
In this situation, the holonomy of $(M,g)$ equals Spin($7$), too. Therefore, there exists a unique smooth Spin($7$)-structure
$\widetilde{\Omega}$ on $M$. Without loss of generality, we can assume that $\Omega$
and $\widetilde{\Omega}$ coincide on $M^0$. This observation proves that $\widetilde{\Omega}$ is a smooth extension of
$\Omega$ to the singular orbit and we do not have to prove the smoothness conditions for $\Omega$.

If the holonomy $\text{Hol}$ is a smaller group, for example $Sp(2)$ or $SU(4)$, we can prove by
similar arguments that there exists a smooth $\text{Hol}$-structure on $M$. Since $G$ acts by isometries, it leaves the
holonomy bundle invariant and the $\text{Hol}$-structure thus is $G$-invariant.

Since $\dim{G/K} < \dim{G/H}$, the equation (\ref{Evol}) sometimes degenerates at the singular orbit. More precisely, it
is equivalent to a system which contains equations of type $c'(t)= \ldots + \tfrac{a(t)}{b(t)} + \ldots$ with
$\lim_{t\rightarrow 0} a(t) = \lim_{t\rightarrow 0} b(t) = 0$. In that situation, we cannot apply the theorem of
Picard-Lindel\"of, since the right-hand side $f(a,b,c,\ldots)$ is not defined on an open set. There are indeed cases,
where the solution of our initial value problem depends on initial conditions of higher order which can be chosen freely.
In order to classify the solutions of (\ref{Evol}), we make the power series ansatz

\begin{equation}
\omega = \sum_{m=0}^\infty \omega_m t^m\quad\text{with}\quad \omega_m\in{\bigwedge}^3 \mathfrak{m}\quad
\text{$\text{Ad}_H$-invariant}\:.
\end{equation}

In the cases which we will consider, we fix for any choice of the
metric $g_t$ on $G/H$ a single cocalibrated $G_2$-structure $\omega_t$
whose associated metric is $g_t$. The cocalibrated $G_2$-structures with that
property are in many cases a discrete set. The set of $G_2$-structures
which is obtained from a sufficiently large set of $g_t$ is preserved by
(\ref{Evol}) and our restriction to those $G_2$-structures thus is justified.
$\omega_t$ will always depend analytically on $g_t$ and
the cohomogeneity-one metric $g$ is Ricci-flat. Since any Einstein metric is analytic
(see DeTurck, Kazdan \cite{DeKa}), we are allowed to make the
above power series ansatz. Equation (\ref{Evol})
yields the following system of recursive equations for the $g_m$

\begin{equation}
\label{RekSys}
\mathcal{L}_m(g_m) = P_m(g_0,\ldots,g_{m-1})\:.
\end{equation}

$\mathcal{L}_m$ is a linear operator acting on the space of all $\text{Ad}_H$-invariant
symmetric bilinear forms on $\mathfrak{m}$ and $P_m$ is a polynomial. For a fixed
choice of the principal and the singular orbit, $\mathcal{L}_m$ can be
calculated for all $m$. We will see that in each of our cases there exists an $m_0\in
\mathbb{N}$ such that for all $m\geq m_0$ $\mathcal{L}_m$ is always
invertible. It follows from Theorem \ref{EinsteinCohom1Solution}, which we will state
below, that there is a deeper reason behind this. By solving (\ref{RekSys}) for all
$m<m_0$, we can classify all formal power series which solve equation
(\ref{Evol}). By certain arguments which we will make explicit when we need them
we can check the smoothness conditions. All which is left to be done is to check if
the power series converges. This follows by a theorem of Eschenburg and Wang
\cite{Esch} on cohomogeneity-one Einstein metrics. Before we state that theorem,
we make the following assumption.

\begin{Assumpt}
\label{WEschAssumpt}
The tangent space $\mathfrak{p}$ and the normal space $\mathfrak{p}^\perp$ of the singular orbit shall have no $H$-submodule
of positive dimension in common.
\end{Assumpt}

\begin{Th}\label{EinsteinCohom1Solution} (See Eschenburg, Wang \cite{Esch}.) Let $M$ be a manifold equipped with a
cohomogeneity-one action by a compact Lie group $G$. We assume that the principal orbits of this action are $G$-equivariantly
diffeomorphic to $G/H$ and that there is a singular orbit $G/K$. Moreover, we assume that \ref{WEschAssumpt} is satisfied.

Let $g_0$ be an arbitrary $G$-invariant metric on the singular orbit. Furthermore, let $g'_0:\mathfrak{p}^\perp
\rightarrow S^2(\mathfrak{p})$ be a linear, $K$-equivariant map. Finally, let $\lambda\in\mathbb{R}$ be arbitrary. In this
situation, there exists a $G$-invariant Einstein metric $g$ on a sufficiently small tubular neighborhood of the singular orbit
which has the following properties.

\begin{enumerate}
    \item $g$ has $\lambda$ as Einstein constant.
    \item The restriction of $g$ to the singular orbit is $g_0$.
    \item The first derivation of $g$ at the singular orbit in the normal directions is $g'_0$.
\end{enumerate}

The set of all Einstein metrics with the above properties depends on additional initial conditions of higher order, which we can prescribe
arbitrarily. The freedom for the $m^{th}$ derivative of the metric in the horizontal or vertical direction can be described by

\begin{equation}
\begin{array}{ll}
W^h_m / W^h_{m-2} &
\quad\text{in the horizontal case if $m\geq 2$}\\ &\\ W_2^v / W_0^v
& \quad\text{in the vertical case if $m=2$}\\
\end{array}
\end{equation}

and there are no further free parameters in the vertical direction.
\end{Th}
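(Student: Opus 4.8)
The plan is to reduce the Einstein condition for a cohomogeneity-one metric of the form $g = g_t + dt^2$ to a second-order system of ODEs for the components of $g_t$, and then to study that system near the singular orbit where it degenerates. First I would write, in the auxiliary basis adapted to $q$, the Einstein equations $\mathrm{Ric}(g) = \lambda g$ as a system for the one-parameter family $g_t \in S^2(\mathfrak{m})$; decomposing $\mathfrak{m} = \mathfrak{p} \oplus \mathfrak{p}^\perp$ and using Assumption \ref{WEschAssumpt} (so that no $H$-irreducible summand appears in both $\mathfrak{p}$ and $\mathfrak{p}^\perp$ and hence the horizontal and vertical blocks do not mix at the linearized level), the system splits into a horizontal block governing $g_t|_{S^2(\mathfrak{p})}$ and a vertical block governing $g_t|_{S^2(\mathfrak{p}^\perp)}$, coupled only through lower-order (nonlinear) terms. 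The vertical block carries the essential degeneration: because $\mathfrak{p}^\perp$ carries polar coordinates (Remark \ref{SmoothnessRemark}), the natural variable is $\tfrac{1}{t} g_t(v,v)^{1/2}$ for $v$ tangent to the collapsing sphere, and the mean-curvature terms produce coefficients that blow up like $1/t$. I would introduce exactly this rescaled vertical variable, so that the singular ODE acquires a regular-singular (Fuchsian) structure at $t=0$.

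Next I would set up the power-series ansatz $g_t = \sum_{m\ge 0} g_m t^m$ with $g_m$ an $\mathrm{Ad}_H$-invariant symmetric bilinear form, and plug it into the rescaled system to obtain the recursion $\mathcal{L}_m(g_m) = P_m(g_0,\dots,g_{m-1})$ of \eqref{RekSys}, where $\mathcal{L}_m$ is the indicial operator of the Fuchsian system. The crucial algebraic computation is to identify $\mathrm{coker}\,\mathcal{L}_m$ and $\ker\,\mathcal{L}_m$ for each $m$ on the horizontal and vertical pieces separately. I expect that $\mathcal{L}_m$ is surjective for all $m$ in the vertical direction once $m \ge 1$ (after the $0^{\mathrm{th}}$- and $1^{\mathrm{st}}$-order data $g_0$ and $g'_0$ are fixed by the smoothness/round-sphere normalization and by the prescribed map $g_0':\mathfrak{p}^\perp\to S^2(\mathfrak{p})$), with the only genuine kernel occurring at $m=2$, accounting for the $W_2^v/W_0^v$ freedom; and that in the horizontal direction the kernel at level $m\ge 2$ is a copy of $W^h_m/W^h_{m-2}$, reflecting the two indicial roots of a scalar Fuchsian equation of the type $c''(t) + \tfrac{c'(t)}{t}\,(\cdots) + \cdots$. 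The shift by $W_{m-2}$ (resolvent ambiguity absorbed into lower-order terms) and the identification of the relevant cohomology with the $K$-equivariant-map spaces $W^h_m, W^v_m$ of Theorem \ref{SmoothExtension} is the bookkeeping heart of the argument; here Assumption \ref{WEschAssumpt} and assumption \eqref{WEschAssumption1} are used to guarantee the horizontal/vertical splitting is clean and that the relevant modules are the ones Theorem \ref{SmoothExtension} controls.

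Having solved \eqref{RekSys} recursively — choosing freely at each step an element of $\ker\,\mathcal{L}_m$, which by the previous paragraph is precisely $W^h_m/W^h_{m-2}$ resp. $W^v_2/W^v_0$ — I would obtain a formal power-series solution with the asserted parameters. To upgrade this to a genuine smooth metric on a tubular neighborhood, I would invoke the standard majorant/Cauchy–Kovalevskaya-type estimate for Fuchsian systems whose indicial operator is eventually invertible: there is an $m_0$ with $\mathcal{L}_m$ invertible for $m \ge m_0$, and the operator norm of $\mathcal{L}_m^{-1}$ is bounded (in fact $O(1/m)$ or better), so the polynomials $P_m$ — of bounded degree with coefficients growing geometrically — yield a majorant series with positive radius of convergence. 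Convergence of the horizontal and vertical parts together then gives a real-analytic metric $g$ on $G/H \times (0,\varepsilon)$; the $0^{\mathrm{th}}$- and $1^{\mathrm{st}}$-order normalization plus the fact that all Taylor coefficients land in the images $\imath_m(W_m)$ let Theorem \ref{SmoothExtension} conclude that $g$ extends smoothly across the singular orbit $G/K$, and by construction its Ricci tensor is $\lambda g$. The main obstacle is the second paragraph: computing the indicial operator $\mathcal{L}_m$ in closed form, proving it is invertible for all large $m$ (equivalently, that no non-integer-spaced resonances occur beyond the listed ones), and matching its kernel and cokernel exactly with the quotients $W^h_m/W^h_{m-2}$ and $W^v_2/W^v_0$ — this is where the representation theory of $H$ acting on $\mathfrak{p}$ and $\mathfrak{p}^\perp$ and the precise form of the Ricci curvature of a homogeneous metric must be combined carefully.
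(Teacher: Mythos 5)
This theorem is cited from Eschenburg and Wang; the paper itself does not give a proof, so the only basis for comparison is the gloss the author provides around the statement (especially Remark~\ref{WEschThmRem}.\ref{ConvRem}). Against that gloss your reconstruction captures the overall structure faithfully: the power-series ansatz, the level-$m$ recursion $\mathcal{L}_m(g_m)=P_m(g_0,\dots,g_{m-1})$ of (\ref{RekSys}), the identification of formal free parameters with the quotients $W^h_m/W^h_{m-2}$ and $W_2^v/W_0^v$, and the appeal to Theorem~\ref{SmoothExtension} to recognize which formal solutions extend smoothly across $G/K$. You also correctly locate Assumption~\ref{WEschAssumpt} as the device that keeps the horizontal and vertical blocks from mixing.

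Two places deserve a flag. First, the convergence mechanism: you invoke a majorant/Cauchy--Kovalevskaya estimate from eventual invertibility of $\mathcal{L}_m$, while Remark~\ref{WEschThmRem}.\ref{ConvRem} says explicitly that Eschenburg--Wang prove convergence by a Picard iteration on a space of analytic curves, and the paper uses the precise structure of that iteration (that it preserves $S^2(\mathfrak{p})\oplus S^2(\mathfrak{p}^\perp)$) to push convergence through in cases where Assumption~\ref{WEschAssumpt} fails but the metric is diagonal. A bare majorant argument would not obviously yield that extension, so the two routes are not interchangeable in the applications that follow. Second, you assert but do not argue that the vertical kernel is trivial for $m\geq 3$; this is precisely the content of ``no further free parameters in the vertical direction,'' and in the actual source it falls out of an explicit computation of indicial roots for a first-order reformulation of the Einstein equations in the pair $(g_t,L_t)$ with $L_t$ the shape operator, not from the Fuchsian form of the second-order system alone. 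You rightly name this the ``main obstacle,'' but as written the vertical half of the parameter count is unproved, and it is the nontrivial half.
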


\begin{Rem}
\label{WEschThmRem}
\begin{enumerate}
    \item Up to constant multiples, there is exactly one $K$-invariant scalar product $h$ on $\mathfrak{p}^\perp$. The reason
    for this is that the orbits of the $K$-action on $\mathfrak{p}^\perp$ are spheres. In particular, we have $\dim{W_0^v}=1$.
    \item $S^m(\mathfrak{p}^{\perp})$ is embedded into $S^{m+2}(\mathfrak{p}^{\perp})$
    by the map $\imath$ with $\imath(P):= h \vee P$, where $\vee$ is the symmetrized tensor product. $\imath$ induces
    canonical embeddings of $W^h_{m-2}$ and $W^v_{m-2}$ into $W^h_m$ and $W^v_m$, which we have implicitly used in
    the formulation of the above theorem.
    \item Since we assume that \ref{WEschAssumpt} is satisfied, the metric is automatically
    contained in $S^2(\mathfrak{p}) \oplus S^2(\mathfrak{p}^\perp)$. Analogously, we have $W_m = W_m^h \oplus W_m^v$.
    \item For sufficiently large $m$ the chains $W_0 \subseteq W_2 \subseteq W_4 \subseteq \ldots$ and $W_1 \subseteq W_3
    \subseteq W_5 \subseteq \ldots$ stabilize. In particular, there are only finitely many initial conditions which we can
    prescribe.
    \item \label{ConvRem} The cohomogeneity-one Einstein condition is system of second order differential equations. That
    system yields a recursive equation which is similar to (\ref{RekSys}). The convergence of the power series solutions can
    be shown with the help of a Picard iteration. More precisely, any power series which satisfies the Einstein condition
    converges if $\mathfrak{p}$ and $\mathfrak{p}^\perp$ have no $H$-submodule in common. In particular, any formal power
    series solution of (\ref{RekSys}) converges if we assume \ref{WEschAssumpt}. In some of the cases which we will consider,
    this assumption is not satisfied. Nevertheless, the convergence can be proven in those cases, too. In the article, we
    restrict ourselves to metrics which are diagonal with respect to a fixed basis of $\mathfrak{m}$. The metric is therefore
    an element of $S^2(\mathfrak{p}) \oplus S^2(\mathfrak{p}^\perp)$. Moreover, the equations for the Einstein condition do
    not change a diagonal metric into a non-diagonal one. The space $S^2(\mathfrak{p}) \oplus
    S^2(\mathfrak{p}^\perp)$ thus is invariant under the Picard iteration. This allows us to repeat the arguments of
    \cite{Esch} and to prove the convergence.
    \item In order to deduce the smoothness conditions, we have to describe the spaces $W_m^h$ and $W_2^v$. We can therefore
    easily apply Theorem \ref{EinsteinCohom1Solution} and obtain new examples of cohomogeneity-one Einstein metrics.
    \item The above theorem predicts that certain second derivatives in the vertical direction can be chosen freely. For
    similar reasons as in Remark \ref{SmoothnessRemark}, these derivatives are third derivatives with respect to $t$.
    As we have already remarked in \ref{SmoothnessRemark}, we have $g(\tfrac{\partial}{\partial t},v)=0$ for all
    $v\in\mathfrak{m}$ by the choice of our coordinates. We therefore have to ignore the free parameters in the vertical
    direction which describe the change of $g(\tfrac{\partial}{\partial t},v)$.
    \item In general, the power series converges for small values of $t$ only. The metrics which we construct with the help
    of the above theorem are thus incomplete. They can be extended to complete metrics if and only if the equation
    $\frac{\partial}{\partial t}\ast\omega = d\omega$ or $\text{Ric} = \lambda g$ has a solution for all $t\in[0,\infty)$ or
    there are two singular orbits.
\end{enumerate}
\end{Rem}

After we have proven the convergence, we have constructed metrics
whose holonomy group is a subgroup of Spin($7$) on a tubular
parameterize of the singular orbit. In order to decide if the
holonomy is all of Spin($7$), we need the following lemma.

\begin{Le} \label{SU4Lemma} (See \cite{Rei}.)
\begin{enumerate}
    \item Let $M$ be an eight-dimensional manifold which carries a parallel $SU(4)$-structure $\mathfrak{G}$. We denote the
    space of all parallel $\text{Spin}(7)$-structures on $M$ which are an extension of $\mathfrak{G}$ and have the same
    extension to an $SO(8)$-structure as $\mathfrak{G}$ by $\mathcal{S}$. Any connected component of $\mathcal{S}$ is
    diffeomorphic to a circle.
    \item Let $M$ be an eight-dimensional manifold which carries a one-parame\-ter family $\mathcal{S}$ of parallel
    $\text{Spin}(7)$-structures. Moreover, let the extension of the $\text{Spin}(7)$-structures to an $SO(8)$-structure always
    be the same and let $\mathcal{S}$ be diffeomorphic to a circle. Then, there also exists a parallel $SU(4)$-structure on $M$.
\end{enumerate}
\end{Le}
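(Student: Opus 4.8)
The plan is to prove the two statements separately, using in both directions the holonomy principle and the decomposition of $\bigwedge^4(\mathbb{R}^8)^{\ast}$ under $SU(4)$ and $\text{Spin}(7)$. For (1), fix a point $p\in M$ and write $g,J,\omega,\Psi=\Psi_++i\Psi_-$ for the metric, complex structure, Kähler form and unit $(4,0)$-form of $\mathfrak{G}$. Then $\Omega_0:=\tfrac12\,\omega\wedge\omega+\Psi_+$ is a $\text{Spin}(7)$-form whose underlying $SO(8)$-structure is that of $\mathfrak{G}$ --- this is the standard compatible inclusion $SU(4)=\text{Spin}(6)\subset\text{Spin}(7)$ --- and it is fixed by the structure group $SU(4)$ of $\mathfrak{G}$. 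The first point to observe is that a $\text{Spin}(7)$-structure $\Omega$ is an extension of $\mathfrak{G}$ with the same $SO(8)$-extension precisely when, at each point, $\Omega$ lies in the $SO(8)$-orbit of $\Omega_0$ and is $SU(4)$-invariant; and any such $\Omega$ is automatically parallel, since an $SU(4)$-invariant algebraic tensor built from a parallel $SU(4)$-structure is parallel, while conversely a parallel section is determined by a single value. Hence evaluation at $p$ identifies $\mathcal{S}$ with the fixed-point set of $SU(4)$ on the orbit $SO(8)\cdot\Omega_0\cong SO(8)/\text{Spin}(7)$. That set contains $\Omega_0$, and --- just as in Lemma~\ref{G2StrucSpace} --- each of its connected components is a homogeneous space
\[
\text{Norm}_{SO(8)}(SU(4))\big/\text{Norm}_{\text{Spin}(7)}(SU(4))\cong U(4)/SU(4)\cong S^1,
\]
using that $\text{Norm}_{SO(8)}(SU(4))^0=U(4)$ while the centralizer of $\text{Spin}(6)$ in $\text{Spin}(7)$ is finite, so $\text{Norm}_{\text{Spin}(7)}(SU(4))^0=SU(4)$. (Concretely, the $SU(4)$-invariant four-forms on $\mathbb{R}^8$ form the three-dimensional space $\langle\omega\wedge\omega,\Psi_+,\Psi_-\rangle$, and the circle above is the family $\tfrac12\omega\wedge\omega+\cos\theta\,\Psi_++\sin\theta\,\Psi_-$.)

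For (2), let $\mathcal{S}\cong S^1$ be a family of parallel $\text{Spin}(7)$-structures with one common $SO(8)$-extension, so that they all share a metric $g$ and an orientation. At $p$ their values form a circle $\mathcal{S}_p$ inside the orbit $SO(8)\cdot\Omega_0\cong SO(8)/\text{Spin}(7)$, and $\mathrm{Hol}:=\mathrm{Hol}_p(g)$ fixes every element of $\mathcal{S}_p$. Fixing one $\Omega_0\in\mathcal{S}_p$ gives $\mathrm{Hol}\subseteq\text{Spin}(7)_0:=\mathrm{Stab}_{SO(8)}(\Omega_0)$, and since $\mathcal{S}_p$ is a circle it contains an element $\Omega_1$ that is not a scalar multiple of $\Omega_0$. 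As the space of $\text{Spin}(7)_0$-invariants in $\bigwedge^4(\mathbb{R}^8)^{\ast}$ is just $\mathbb{R}\,\Omega_0$, the group $\mathrm{Hol}$ --- fixing also $\Omega_1\notin\mathbb{R}\,\Omega_0$ --- is a \emph{proper} subgroup of $\text{Spin}(7)_0$. I would then invoke Berger's classification together with the Ricci-flatness of $g$ (which forces a Kähler or quaternion-Kähler holonomy to be special): a proper connected Riemannian holonomy group contained in $\text{Spin}(7)\subseteq SO(8)$ is, up to conjugacy in $SO(8)$, either contained in $SU(4)$ --- this covers $SU(4)$, $Sp(2)$, and the holonomy groups of locally reducible Ricci-flat metrics, with a single exception --- or it equals the copy of $G_2$ acting on $\mathbb{R}^8=\mathbb{R}^7\oplus\mathbb{R}$. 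The exceptional case is excluded because the space of $G_2$-invariant four-forms on $\mathbb{R}^7\oplus\mathbb{R}$ is only two-dimensional and contains a single $\text{Spin}(7)$-form with the prescribed metric and orientation, so it cannot contain the circle $\mathcal{S}_p$. Hence $\mathrm{Hol}$ lies in a conjugate of $SU(4)$, and the holonomy principle yields a parallel $SU(4)$-structure on $M$.

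The main obstacle is this case analysis in (2), and above all the exclusion of the reducible $G_2$, whose space of invariant four-forms is only barely too small to accommodate a circle of $\text{Spin}(7)$-forms. An alternative and in some ways cleaner route is spinorial: parallel $\text{Spin}(7)$-structures of a fixed orientation correspond bijectively to parallel unit spinors of the corresponding chirality, so a circle of them produces a two-dimensional space of parallel spinors of that chirality, and the classification of Riemannian holonomy groups carrying parallel spinors (Wang) again gives $\mathrm{Hol}\subseteq SU(4)$ up to conjugacy, the reducible $G_2$ causing no trouble since it admits only one parallel spinor of each chirality. In either approach one should also record the elementary facts --- that a circle contains points not proportional to a prescribed one, and that parallel tensor fields are determined by a single value --- which legitimate the passage between $\mathcal{S}$ and $\mathcal{S}_p$.
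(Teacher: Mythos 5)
The paper does not prove this lemma itself; it cites \cite{Rei}, so there is no internal proof to compare your argument against. Judged on its own merits, your proposal is correct. For part~(1), identifying a parallel Spin($7$)-extension with an $SU(4)$-fixed point on the orbit $SO(8)\cdot\Omega_0\cong SO(8)/\mathrm{Spin}(7)$ via the holonomy principle, and then computing the fixed-point components as $\mathrm{Norm}_{SO(8)}(SU(4))/\mathrm{Norm}_{\mathrm{Spin}(7)}(SU(4))\cong U(4)/SU(4)\cong S^1$, is exactly the normalizer-quotient method the paper itself employs in Lemma~\ref{G2StrucSpace} for $G_2$-structures, so you are in the spirit of the text; the explicit circle $\tfrac12\omega\wedge\omega+\cos\theta\,\Psi_++\sin\theta\,\Psi_-$ realizes the three-dimensional space of $SU(4)$-invariant four-forms $\langle\omega\wedge\omega,\Psi_+,\Psi_-\rangle$ correctly, with the $U(1)$-center of $U(4)$ rotating $(\Psi_+,\Psi_-)$. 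For part~(2), your use of $\bigwedge^4_1=\mathbb{R}\,\Omega_0$ to conclude $\mathrm{Hol}\subsetneq\mathrm{Spin}(7)_0$, followed by Berger's list restricted to Ricci-flat metrics, is sound, and the case analysis is complete once one observes (as you do) that the only proper Ricci-flat holonomy in $\mathrm{Spin}(7)$ not conjugate into $SU(4)$ is $G_2$ acting on $\mathbb{R}^7\oplus\mathbb{R}$, whose invariant four-forms $\alpha\ast_7\omega+\beta\,dx^0\wedge\omega$ contain only finitely many Spin($7$)-forms with prescribed metric and orientation, hence cannot accommodate a circle. Your spinorial alternative via Wang's classification of parallel spinors is cleaner precisely because it avoids this exclusion; it is worth recording either way. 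One minor point of care: you appeal to the fact that a circle in the $SO(8)$-orbit of $\Omega_0$ must contain a point outside $\mathbb{R}\,\Omega_0$; this holds because the orbit meets the line $\mathbb{R}\,\Omega_0$ in at most $\{\pm\Omega_0\}$, a finite set that cannot exhaust a circle, so the step is legitimate but deserves a word.
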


With the help of the facts which we have collected in this section we are now able to construct examples of Spin($7$)-manifolds.

\section{The geometry of the Aloff-Wallach spaces}

Before we construct cohomogeneity-one metrics with an Aloff-Wallach space as the principal orbit, we have to study those spaces in
detail. The Aloff-Wallach spaces are certain homogeneous spaces which were introduced by Aloff and Wallach \cite{Al}
in order to study metrics with positive sectional curvature. Let

\begin{equation}
\begin{aligned}
i_{k,l}:U(1) & \rightarrow SU(3)\\
i_{k,l}\left(e^{i\varphi}\right) & := \left(\,\begin{array}{ccc}
e^{ik\varphi} & 0 & 0\\ 0 & e^{il\varphi} & 0\\ 0 & 0 &
e^{-i(k+l)\varphi}\\
\end{array}\,\right)\quad \text{with}\:k,l\in\mathbb{Z}\:.\\
\end{aligned}
\end{equation}

We denote the image of $U(1)$ with respect to $i_{k,l}$ by
$U(1)_{k,l}$. Any one-dimensional subgroup of $SU(3)$ is conjugate
to a $U(1)_{k,l}$. The Aloff-Wallach space $N^{k,l}$ is defined as
the quotient $SU(3)/U(1)_{k,l}$. Without loss of generality, we
assume that $k$ and $l$ are coprime. Let $\sigma$ be a permutation
of the triple $(k,l,-k-l)$. It is easy to see that the spaces
$N^{k,l}$ and $N^{\sigma(k),\sigma(l)}$ are $SU(3)$-equivariantly
diffeomorphic. Let $2\mathfrak{u}(1)$ be the Cartan subalgebra of all
diagonal matrices in $\mathfrak{su}(3)$ and let
$\mathfrak{u}(1)_{k,l}$ be the Lie algebra of $U(1)_{k,l}$.
The Weyl group of $\mathfrak{su}(3)$ is isomorphic to the
permutation group $S_3$ and acts on $2\mathfrak{u}(1)$.
The action of a $\sigma\in S_3$ on $2\mathfrak{u}(1)$
changes $\mathfrak{u}(1)_{k,l}$ into
$\mathfrak{u}(1)_{\sigma(k),\sigma(l)}$. Therefore, the
$S_3$-action on $(k,l,-k-l)$ can be identified with the action
of the Weyl group. Since $N^{k,l}$ and $N^{-k,-l}$ are the same
manifold, we introduce the following convention.

\begin{Conv} \label{AWConv}
When we consider an Aloff-Wallach space $N^{k,l}$, we assume that $k\geq l\geq 0$. Later on, we will turn to another
convention which will be introduced at that point.
\end{Conv}

The Aloff-Wallach spaces $N^{1,0}$ or $N^{1,1}$ are called \textit{exceptional} and the other ones are called \textit{generic}.
Since there are many differences between the exceptional and the generic Aloff-Wallach spaces, we often
have to tread $N^{1,0}$, $N^{1,1}$, and the generic $N^{k,l}$ as separate cases. There are infinitely many homotopy types of
Aloff-Wallach spaces. This follows from the fact that

\begin{equation}
H^4(N^{k,l},\mathbb{Z})=\mathbb{Z}_{k^2+lk+l^2}\:.
\end{equation}

Some of the $N^{k,l}$ are homeomorphic but not diffeomorphic to each other. Examples of this fact can be found in Kreck, Stolz
\cite{Kreck}. On the Aloff-Wallach spaces $N^{k,l}$ there exist two nearly parallel $G_2$-structures, which depend on $k$ and
$l$ \cite{Cve}. The holonomy of the cones over those $G_2$-structures therefore is contained in
Spin($7$). We will see that the Aloff-Wallach spaces are not covered by a sphere. Therefore, the cones have a singularity at
the tip, which is not an orbifold singularity.

Our next step is to describe all $SU(3)$-invariant metrics on the Aloff-Wallach spaces. In order to do this, we fix the following basis
$(e_i)_{1\leq i\leq 8}$ of $\mathfrak{su}(3)$.

\begin{equation}
\label{qbasisAW}
\begin{array}{l}
\begin{array}{lll}
e_1:= E_1^2 - E_2^1 & e_2:=iE_1^2 + iE_2^1 & e_3:=E_1^3 - E_3^1 \\
&& \\
e_4:=iE_1^3 + iE_3^1 & e_5:= E_2^3 - E_3^2 & e_6:=iE_2^3 + iE_3^2 \\
\end{array} \\ \\
\:\: e_7:= (2l+k)iE_1^1 + (-2k-l)iE_2^2 + (k-l)iE_3^3 \\ \\
\:\: e_8:= kiE_1^1 + liE_2^2 - (k+l)iE_3^3\\
\end{array}
\end{equation}

$E_i^j$ denotes the $3\times 3$-matrix with a $1$ in the $i^{th}$
row and $j^{th}$ column and zeroes elsewhere. The Lie algebra
$\mathfrak{u}(1)_{k,l}$ is generated by $e_8$.
$q(X,Y):=-\text{tr}(XY)$ defines a biinvariant metric on
$\mathfrak{su}(3)$ and $(e_1,\ldots,e_7)$ is a basis of the
$q$-orthogonal complement $\mathfrak{m}$ of
$\mathfrak{u}(1)_{k,l}$. The isotropy action of
$\mathfrak{u}(1)_{k,l}$ splits $\mathfrak{m}$ into the following
irreducible submodules.

\begin{equation}
\label{V1V2V3}
\begin{array}{ll}
V_1 :=\text{span}(e_1,e_2) & V_2 :=\text{span}(e_3,e_4) \\
V_3 :=\text{span}(e_5,e_6) & V_4 :=\text{span}(e_7) \\
\end{array}
\end{equation}

The weights of the first three submodules are

\begin{equation}
\label{WeightsU1kl}
k-l,\: 2k+l,\: k+2l\:.
\end{equation}

If $N^{k,l}$ is generic, $V_1$, $V_2$, and $V_3$ are pairwise
inequivalent. If $(k,l)=(1,0)$, $V_1$ and $V_3$ are equivalent
and $V_2$ is not equivalent to the two other modules. In the case
where $k=l=1$, $V_1$ is trivial and $V_2$ and $V_3$ are equivalent
to each other. Any $SU(3)$-invariant metric $g$ on $N^{k,l}$ can
be identified via $q$ with a $\mathfrak{u}(1)_{k,l}$-equivariant
endomorphism of $\mathfrak{m}$. We can therefore classify the
invariant metrics with the help of Schur's lemma. If $N^{k,l}$ is
generic, the matrix representation $g_{ij}:=g(e_i,e_j)$ of $g$ is

\begin{equation}
\label{MetricsNkl} \left(\,\begin{array}{ccccccc} \hhline{--~~~~~}
\multicolumn{1}{|c}{a^2} & \multicolumn{1}{c|}{0} &&&&&\\
\multicolumn{1}{|c}{0} & \multicolumn{1}{c|}{a^2} &&&&&\\
\hhline{----~~~} && \multicolumn{1}{|c}{b^2} &
\multicolumn{1}{c|}{0} &&&\\ && \multicolumn{1}{|c}{0} &
\multicolumn{1}{c|}{b^2} &&&\\ \hhline{~~----~} &&&&
\multicolumn{1}{|c}{c^2} & \multicolumn{1}{c|}{0} &\\ &&&&
\multicolumn{1}{|c}{0} & \multicolumn{1}{c|}{c^2} &\\
\hhline{~~~~---} &&&&&& \multicolumn{1}{|c|}{f^2}\\
\hhline{~~~~~~-}
\end{array}\,\right)\quad\text{with}\:\:a,b,c,f\in \mathbb{R}\setminus\{0\}\:.
\end{equation}

The coefficients of the $\text{Spin}(7)$-structure which we will construct contain odd powers of $a$, $b$, $c$, and $f$.
Therefore, we allow these numbers to be negative, too, although this does not change the metric. Next, we assume that $k=1$
and $l=0$. The matrix representation of $g$ with respect to the basis $(e_1,e_2,e_5,e_6,e_3,e_4,e_7)$ is

\begin{equation}
\label{MetricsN10} \left(\,\begin{array}{ccccccc} \hhline{----~~~}
\multicolumn{1}{|c}{a^2} & \multicolumn{1}{c|}{0} & \beta_{1,5} &
\multicolumn{1}{c|}{\beta_{1,6}} &&&\\ \multicolumn{1}{|c}{0} &
\multicolumn{1}{c|}{a^2} & -\beta_{1,6} &
\multicolumn{1}{c|}{\beta_{1,5}} &&&\\ \hhline{----~~~}
\multicolumn{1}{|c}{\beta_{1,5}} &
\multicolumn{1}{c|}{-\beta_{1,6}} & c^2 & \multicolumn{1}{c|}{0}
&&&\\ \multicolumn{1}{|c}{\beta_{1,6}} &
\multicolumn{1}{c|}{\beta_{1,5}} & 0 & \multicolumn{1}{c|}{c^2}
&\\ \hhline{------~} &&&& \multicolumn{1}{|c}{b^2} &
\multicolumn{1}{c|}{0} &\\ &&&& \multicolumn{1}{|c}{0} &
\multicolumn{1}{c|}{b^2} &\\ \hhline{~~~~---} &&&&&&
\multicolumn{1}{|c|}{f^2}\\ \hhline{~~~~~~-}
\end{array}\,\right)
\end{equation}

with $a,b,c,f,\beta_{1,5},\beta_{1,6}\in \mathbb{R}$, $a^2c^2\geq \beta_{1,5}^2+\beta_{1,6}^2$, $b\neq 0$, and $f\neq 0$. If
$k=l=1$, the matrix representation of $g$ with respect to $(e_1,e_2,e_7,e_3,e_4,e_5,e_6)$ is

\begin{equation}
\label{MetricsN11} \left(\,\begin{array}{ccccccc} \hhline{---~~~~}
\multicolumn{1}{|c}{a_1^2} & \beta_{1,2} &
\multicolumn{1}{c|}{\beta_{1,7}} &&&&\\
\multicolumn{1}{|c}{\beta_{1,2}} & a_2^2 &
\multicolumn{1}{c|}{\beta_{2,7}} &&&&\\
\multicolumn{1}{|c}{\beta_{1,7}} & \beta_{2,7} &
\multicolumn{1}{c|}{f^2} &&&&\\ \hhline{-------} &&&
\multicolumn{1}{|c}{b^2} & \multicolumn{1}{c|}{0} &
\multicolumn{1}{|c}{ \beta_{3,5}} &
\multicolumn{1}{c|}{\beta_{3,6}}\\ &&& \multicolumn{1}{|c}{0} &
\multicolumn{1}{c|}{b^2} & \multicolumn{1}{|c}{ -\beta_{3,6}} &
\multicolumn{1}{c|}{\beta_{3,5}}\\ \hhline{~~~----} &&&
\multicolumn{1}{|c}{\beta_{3,5}} &
\multicolumn{1}{c|}{-\beta_{3,6}} & \multicolumn{1}{|c}{c^2} &
\multicolumn{1}{c|}{0}\\ &&& \multicolumn{1}{|c}{\beta_{3,6}}&
\multicolumn{1}{c|}{\beta_{3,5}} & \multicolumn{1}{|c}{0} &
\multicolumn{1}{c|}{c^2}\\ \hhline{~~~----}
\end{array}\,\right)
\end{equation}

As in the other cases, the above matrix has to be positive definite. Throughout the article we assume that the metric on the
principal orbit is diagonal with respect to $(e_1,\ldots,e_7)$. This assumption simplifies our calculations and we
nevertheless obtain interesting results. Moreover, some of the non-diagonal metrics can be changed by the action of the
normalizer $\text{Norm}_{SU(3)} U(1)_{k,l}$ into diagonal ones.

In Eschenburg, Wang \cite{Esch}, Grove, Ziller \cite{Gro}, and
Schwachh\"ofer, Tuschmann \cite{Schw2} it is explained how the
Einstein condition $\text{Ric} = \lambda g$ for a
cohomogeneity-one manifold can be rewritten as a system of
ordinary differential equations for the coefficient functions of
$g$. The Ricci-tensor of a generic $N^{k,l}$ with an arbitrary
$SU(3)$-invariant metric was calculated by Wang \cite{Wang1}.
If we put the results of the above papers together, we see that the
Einstein condition for a generic $N^{k,l}$ is equivalent to

\begin{equation}
\label{EinsteinCondGeneric}
\begin{array}{rcl}
-\frac{a''}{a}+\frac{a'^2}{a^2}-\frac{a'}{a}\left(2\frac{a'}{a}
+2\frac{b'}{b}+2\frac{c'}{c}+\frac{f'}{f}\right)
+ \frac{6}{a^2} - \frac{1}{2}\frac{(k+l)^2}{(k^2+lk+l^2)^2}
\frac{f^2}{a^4} && \\
 + \frac{a^4-b^4-c^4}{a^2b^2c^2} & = & \lambda \\
 && \\
-\frac{b''}{b}+\frac{b'^2}{b^2}-\frac{b'}{b}\left(2\frac{a'}{a}
+2\frac{b'}{b}+2\frac{c'}{c}+\frac{f'}{f}\right)
+ \frac{6}{b^2} - \frac{1}{2}\frac{l^2}{(k^2+lk+l^2)^2}
\frac{f^2}{b^4} && \\
+ \frac{b^4-a^4-c^4}{a^2b^2c^2} & = & \lambda \\
&& \\
-\frac{c''}{c}+\frac{c'^2}{c^2}-\frac{c'}{c}\left(2\frac{a'}{a}
+2\frac{b'}{b}+2\frac{c'}{c}+\frac{f'}{f}\right)
+ \frac{6}{c^2} - \frac{1}{2}\frac{k^2}{(k^2+lk+l^2)^2}
\frac{f^2}{c^4} && \\
+ \frac{c^4-a^4-b^4}{a^2b^2c^2} & = & \lambda \\
&& \\
-\frac{f''}{f}+\frac{f'^2}{f^2}-\frac{f'}{f}\left(2\frac{a'}{a}
+2\frac{b'}{b}+2\frac{c'}{c}+\frac{f'}{f}\right)
+\frac{1}{2}\frac{(k+l)^2}{(k^2+lk+l^2)^2}\frac{f^2}{a^4}
&& \\
+\frac{1}{2}\frac{l^2}{(k^2+lk+l^2)^2}\frac{f^2}{b^4}
+\frac{1}{2}\frac{k^2}{(k^2+lk+l^2)^2}\frac{f^2}{c^4} & = &
\lambda \\
&& \\
 - 2\frac{a''}{a} - 2\frac{b''}{b} - 2\frac{c''}{c} -
\frac{f''}{f} & = & \lambda\\
\end{array}
\end{equation}

If the principal orbit is $N^{1,0}$ and carries a diagonal metric,
the Einstein condition is equivalent to
(\ref{EinsteinCondGeneric}) with $k=1$ and $l=0$. In particular, a
diagonal metric cannot be changed by the equations
(\ref{EinsteinCondGeneric}) into a non-diagonal one for a
different value of $t$. If $k=l=1$, we do not necessarily have
$g(e_1,e_1)=g(e_2,e_2)$ and $\text{Ric} = \lambda g$ becomes

\begin{equation}
\label{EinsteinCondExcept}
\begin{array}{rcl}
-\frac{a_1''}{a_1}+\frac{a_1'^2}{a_1^2}-\frac{a_1'}{a_1}\left(\frac{a_1'}{a_1}
+\frac{a_2'}{a_2}+2\frac{b'}{b}+2\frac{c'}{c}+\frac{f'}{f}\right)
+ \frac{6}{a_1^2} -\frac{2}{9}\frac{f^2}{a_1^2a_2^2} && \\
+ 18\frac{a_1^4-a_2^4}{a_1^2a_2^2f^2} +
\frac{a_1^4-b^4-c^4}{a_1^2b^2c^2} & = & \lambda\\
&& \\
-\frac{a_2''}{a_2}+\frac{a_2'^2}{a_2^2}-\frac{a_2'}{a_2}\left(\frac{a_1'}{a_1}
+\frac{a_2'}{a_2}+2\frac{b'}{b}+2\frac{c'}{c}+\frac{f'}{f}\right)
+ \frac{6}{a_2^2} -\frac{2}{9}\frac{f^2}{a_1^2a_2^2} && \\
 + 18\frac{a_2^4-a_1^4}{a_1^2 a_2^2 f^2} + \frac{a_2^4-b^4-c^4}{a_2^2
b^2 c^2} & = & \lambda\\ && \\
-\frac{b''}{b}+\frac{b'^2}{b^2}-\frac{b'}{b}\left(\frac{a_1'}{a_1}
+\frac{a_2'}{a_2}+2\frac{b'}{b}+2\frac{c'}{c}+\frac{f'}{f}\right)
+ \frac{6}{b^2} - \frac{1}{18}\frac{f^2}{b^4} && \\ +
\frac{b^4-a_1^4-c^4}{2a_1^2 b^2 c^2} + \frac{b^4-a_2^4-c^4}{2a_2^2
b^2 c^2} & = & \lambda\\ && \\
-\frac{c''}{c}+\frac{c'^2}{c^2}-\frac{c'}{c}\left(\frac{a_1'}{a_1}
+\frac{a_2'}{a_2}+2\frac{b'}{b}+2\frac{c'}{c}+\frac{f'}{f}\right)
+ \frac{6}{c^2} - \frac{1}{18}\frac{f^2}{c^4} && \\ +
\frac{c^4-a_1^4-b^4}{2a_1^2b^2c^2} +
\frac{c^4-a_2^4-b^4}{2a_2^2b^2c^2} & = & \lambda\\ && \\
-\frac{f''}{f}+\frac{f'^2}{f^2}-\frac{f'}{f}\left(\frac{a_1'}{a_1}
+\frac{a_2'}{a_2}+2\frac{b'}{b}+2\frac{c'}{c}+\frac{f'}{f}\right)
+ \frac{36}{f^2} - 18\frac{a_1^2}{a_2^2f^2}  -
18\frac{a_2^2}{a_1^2f^2} && \\ + \frac{2}{9}\frac{f^2}{a_1^2
a_2^2} + \frac{1}{18}\frac{f^2}{b^4} + \frac{1}{18}\frac{f^2}{c^4}
& = & \lambda\\ && \\ - \frac{a_1''}{a_1} - \frac{a_2''}{a_2} -
2\frac{b''}{b} - 2\frac{c''}{c} - \frac{f''}{f} & = & \lambda\\
\end{array}
\end{equation}

As in the previous case, the differential equations (\ref{EinsteinCondExcept}) preserve the space of all diagonal metrics on
$N^{k,l}$. According to Remark \ref{WEschThmRem}.\ref{ConvRem}, any formal power series which solves one of the above systems
and satisfies the smoothness conditions converges.

Our next step is to deduce a system of ordinary differential equations which is
equivalent to $d\Omega=0$. Let $M$ be a cohomogeneity-one manifold whose
principal orbit is a generic Aloff-Wallach space. The following basis of the tangent space
induces an $SU(3)$-invariant Spin($7$)-structure on $M$ whose associated metric restricted
to a principal orbit is (\ref{MetricsNkl}).

\begin{equation}
\label{CBasisNkl}
\begin{array}{llll}
f_0:=\frac{\partial}{\partial t} & f_1:=\frac{1}{f}e_7 &
f_2:=\frac{1}{a}e_1 & f_3:=\frac{1}{a}e_2\\ &&&\\
f_4:=\frac{1}{b}e_4 & f_5:=\frac{1}{b}e_3 & f_6:=\frac{1}{c}e_6 &
f_7:=\frac{1}{c}e_5\\
\end{array}
\end{equation}

The matrix representation of the isotropy action of $\mathfrak{u}(1)_{k,l}$ with respect to the basis (\ref{CBasisNkl}) can be
identified with a one-dimensional subgroup of Spin($7$). The $SU(3)$-invariant four-form $\Omega$ which is determined by
(\ref{CBasisNkl}) thus is well-defined. Since $(f_i)_{1\leq i\leq 7}$ is orthonormal with respect to (\ref{MetricsNkl}),
$(f_i)_{1\leq i\leq 7}$ defines a $G_2$-structure on $N^{k,l}$ whose associated metric is an arbitrary $SU(3)$-invariant
one. If we wrote down that $G_2$-structure explicitly, we would see that it contains odd powers of $a$, $b$, $c$, and $f$, as we have
remarked above. We calculate $d\Omega$ and see that $d\Omega=0$ is equivalent to

\begin{equation}
\label{HolRedNkl}
\begin{split}
\frac{a'}{a} &
=\frac{b^2+c^2-a^2}{abc}+\frac{-k-l}{2\Delta}\frac{f}{a^2}\\
\frac{b'}{b} &
=\frac{c^2+a^2-b^2}{abc}+\frac{l}{2\Delta}\frac{f}{b^2}\\
\frac{c'}{c} &
=\frac{a^2+b^2-c^2}{abc}+\frac{k}{2\Delta}\frac{f}{c^2}\\
\frac{f'}{f} & =-\frac{-k-l}{2\Delta}\frac{f}{a^2}
-\frac{l}{2\Delta}\frac{f}{b^2}-\frac{k}{2\Delta}\frac{f}{c^2}\\
\end{split}
\end{equation}

In the above system, $\Delta$ denotes $k^2+lk+l^2$. Furthermore, we have replaced $t$ by $-t$ for cosmetic reasons. This
convention will be maintained throughout the article. We remark that the system (\ref{HolRedNkl}) has also been deduced by
Kanno and Yasui \cite{KanI}. If $k=1$ and $l=0$, we can define $\Omega$ by (\ref{CBasisNkl}), too, and $d\Omega=0$ again is
equivalent to (\ref{HolRedNkl}). In that situation, we can choose the metric on the principal orbit as an arbitrary diagonal
one. If $k=l=1$, $f_2$ and $f_3$ have different coefficients, since we may have $g(e_1,e_1)\neq g(e_2,e_2)$. In that case
we choose the following basis $(f_i)_{0\leq i\leq 7}$ which yields an $SU(3)$-invariant Spin($7$)-structure whose
associated metric is an arbitrary diagonal one.

\begin{equation}
\label{CBasisN11}
\begin{array}{llll}
f_0:=\frac{\partial}{\partial t} & f_1:=\frac{1}{f}e_7 &
f_2:=\frac{1}{a_1}e_1 & f_3:=\frac{1}{a_2}e_2\\ &&&\\
f_4:=\frac{1}{b}e_4 & f_5:=\frac{1}{b}e_3 & f_6:=\frac{1}{c}e_6 &
f_7:=\frac{1}{c}e_5\\
\end{array}
\end{equation}

The equation $d\Omega=0$ is equivalent to the slightly more complicated system

\begin{equation}
\label{HolRedN11}
\begin{split}
\frac{a_1'}{a_1} & =
\frac{b^2+c^2-a_1^2}{a_1bc}+3\frac{a_1^2-a_2^2}{a_1a_2f}
-\frac{1}{3}\frac{f}{a_1a_2}\\ \frac{a_2'}{a_2} & =
\frac{b^2+c^2-a_2^2}{a_2bc}+3\frac{a_2^2-a_1^2}{a_1a_2f}
-\frac{1}{3}\frac{f}{a_1a_2}\\ \frac{b'}{b} & =
\frac{1}{2}\frac{a_1^2+c^2-b^2}{a_1bc}
+\frac{1}{2}\frac{a_2^2+c^2-b^2}{a_2bc}+\frac{1}{6}\frac{f}{b^2}\\
\frac{c'}{c} & = \frac{1}{2}\frac{a_1^2+b^2-c^2}{a_1bc}
+\frac{1}{2}\frac{a_2^2+b^2-c^2}{a_2bc}+\frac{1}{6}\frac{f}{c^2}\\
\frac{f'}{f} & =
-3\frac{(a_1-a_2)^2}{a_1a_2f}+\frac{1}{3}\frac{f}{a_1a_2}
-\frac{1}{6}\frac{f}{b^2}-\frac{1}{6}\frac{f}{c^2}\\
\end{split}
\end{equation}

In the above equations, we again have replaced $t$ by $-t$. The system (\ref{HolRedN11})
was also deduced by Kanno and Yasui \cite{KanII}. We want to know if there are
any $SU(3)$-invariant $G_2$-structures on the Aloff-Wallach spaces except those which
are determined by a basis of type (\ref{CBasisNkl}) or (\ref{CBasisN11}). Let $g$ be an arbitrary
homogeneous metric on $N^{k,l}$. In Lemma \ref{G2StrucSpace}, we have proven that the
space of all $SU(3)$-invariant $G_2$-structures on $N^{k,l}$ whose associated metric is
$g$ and whose orientation is fixed can be described by

\begin{equation}
\text{Norm}_{SO(7)}U(1)_{k,l}/\text{Norm}_{G_2}U(1)_{k,l}\:.
\end{equation}

In this situation, $U(1)_{k,l}$ is identified with its representation on $\mathfrak{m}$ or $\text{Im}(\mathbb{O})$
respectively. We first investigate the problem on the Lie algebra level. Let $G\in\{G_2,SO(7)\}$ and $\mathfrak{g}$ be the Lie
algebra of $G$. The tangent space of $\text{Norm}_{G}U(1)_{k,l}$ is

\begin{equation}
\text{Norm}_{\mathfrak{g}}\mathfrak{u}(1)_{k,l}:= \{ x\in\mathfrak{g} | [z,x]\in\mathfrak{u}(1)_{k,l}\quad\forall
z\in\mathfrak{u}(1)_{k,l}\}\:.
\end{equation}

The Lie algebra $\mathfrak{u}(1)_{k,l}$  is generated by a single $z\in\mathfrak{gl}(7,\mathbb{R})$. Let $\kappa$ be the
Killing form of $\mathfrak{g}$. $x$ is contained in $\text{Norm}_{\mathfrak{g}}\mathfrak{u}(1)_{k,l}$ if and only if

\begin{equation}
[z,x]=\lambda z\quad\text{for a}\quad \lambda\in\mathbb{R}\:.
\end{equation}

From this relation, it follows that

\begin{equation}
0 = \kappa(x,[z,z]) = \kappa([x,z],z) = -\lambda\:\kappa(z,z)\:.
\end{equation}

The above equation is satisfied only if $\lambda=0$ and thus we have

\begin{equation}
\text{Norm}_{\mathfrak{g}}\mathfrak{u}(1)_{k,l} =
\{x\in\mathfrak{g} | [z,x] = 0 \} =:
C_{\mathfrak{g}}\:\mathfrak{u}(1)_{k,l}\:.
\end{equation}

We are going to determine the centralizer $C_{\mathfrak{g}}\:\mathfrak{u}(1)_{k,l}$. First,
we work with the complexification of $C_{\mathfrak{g}}\:\mathfrak{u}(1)_{k,l}$, since this
will simplify some of our arguments. Any $x\in\mathfrak{g}\otimes\mathbb{C}$ has a
Cartan decomposition

\begin{equation}
x=h+\sum\limits_{\alpha\in\Phi}\mu_{\alpha}x_{\alpha}\quad\text{with}\:\mu_{\alpha}\in\mathbb{C}\:.
\end{equation}

In the above formula, $h$ is an element of a fixed Cartan subalgebra $\mathfrak{h}$ of
$\mathfrak{g}\otimes\mathbb{C}$. Furthermore, $\Phi$ is the root system of
$\mathfrak{g}\otimes\mathbb{C}$ and $x_{\alpha}$ is a suitable generator of the
root space $L_{\alpha}$ of $\alpha$. We assume without loss of generality that
$z\in\mathfrak{h}$. With this notation, the centralizer can be described as follows.

\begin{equation}
C_{\mathfrak{g}\otimes\mathbb{C}}\:(\mathfrak{u}(1)_{k,l}\otimes\mathbb{C})
= \left\{\begin{array}{l} \\ \\ \end{array}\!\!\!\!
x\in\mathfrak{g}\otimes\mathbb{C}\right.\left|[z,x]=
\sum\limits_{\alpha\in\Phi}\alpha(z)\mu_{\alpha}x_{\alpha}=0\right\}\:.
\end{equation}

Let $\Phi':=\{\alpha\in\Phi|\alpha(z)=0\}$. The above formula can be simplified to

\begin{equation}
C_{\mathfrak{g}\otimes\mathbb{C}}\:(\mathfrak{u}(1)_{k,l}\otimes\mathbb{C})=
\mathfrak{h}\oplus \bigoplus_{\alpha\in\Phi'} L_{\alpha}\:.
\end{equation}

We specialize to the case $\mathfrak{g}=\mathfrak{so}(7,\mathbb{C})$. Let $(\theta_1,\theta_2, \theta_3)$ be a basis of the
dual $\mathfrak{h}^{\ast}$ of $\mathfrak{h}$ such that

\begin{equation}
\Phi = \{\pm\theta_i|1\leq i\leq 3\}\cup\{\pm\theta_i\pm\theta_j|1\leq
i<j\leq 3\}\:.
\end{equation}

For reasons of simplicity we identify $\mathfrak{h}$ and $\mathfrak{h}^{\ast}$ by the Killing form. The action of $z$ on
$\mathfrak{m}$ is described by the weights (\ref{WeightsU1kl}). For a suitable choice of $z$ and
$(\theta_1,\theta_2,\theta_3)$ we thus have

\begin{equation}
z=(k-l)\theta_1+(2k+l)\theta_2+(k+2l)\theta_3\:.
\end{equation}

Let $\alpha=\sum_{i=1}^3 \alpha_i\theta_i\in\Phi$. The equation $\alpha(z)=0$ is equivalent to

\begin{equation}
(k-l)\alpha_1+(2k+l)\alpha_2+(k+2l)\alpha_3=0\:.
\end{equation}

$\theta_i(z)$ vanishes if and only if the $i^{th}$ of the
three coefficients $k-l,2k+l,k+2l$ equals zero. Analogously, the
root $\pm\theta_i \pm\theta_j$ is contained in $\Phi'$ if and only
if the $i^{th}$ and the $j^{th}$ coefficient coincide up to the
sign. We are now able to describe the normalizer in each of the
three cases.

\begin{itemize}
    \item Let $N^{k,l}$ be a generic Aloff-Wallach space. Since the set $\Phi'$ is empty,
    $\text{Norm}_{\mathfrak{so}(7,\mathbb{C})}(\mathfrak{u}(1)_{k,l} \otimes
    \mathbb{C}) = \mathfrak{h}$. The normalizer $\text{Norm}_{\mathfrak{so}(7)}\mathfrak{u}(1)_{k,l}$ therefore has to be
    isomorphic to $3\mathfrak{u}(1)$. More precisely, it has the following matrix representation with respect to the basis
    $(e_i')_{1\leq i\leq 7}:=(\tfrac{e_i}{\|e_i\|})_{1\leq i\leq 7}$ of $\mathfrak{m}$.

    \begin{equation}
    \label{CsaSO7}
    \left\{\left(\,\begin{array}{ccccccc}
    \hhline{--~~~~~}
    \multicolumn{1}{|c}{0} & \multicolumn{1}{c|}{a} &&&&&\\
    \multicolumn{1}{|c}{-a} & \multicolumn{1}{c|}{0} &&&&&\\
    \hhline{----~~~}
    && \multicolumn{1}{|c}{0} & \multicolumn{1}{c|}{b} &&&\\
    && \multicolumn{1}{|c}{-b} & \multicolumn{1}{c|}{0} &&&\\
    \hhline{~~----~}
    &&&& \multicolumn{1}{|c}{0} & \multicolumn{1}{c|}{c} & \\
    &&&& \multicolumn{1}{|c}{-c} & \multicolumn{1}{c|}{0} & \\
    \hhline{~~~~---}
    &&&&&& \multicolumn{1}{|c|}{0} \\
    \hhline{~~~~~~-}
    \end{array}\,\right)\right.\left|
    \begin{array}{c} \\ \\ \\ \\ \\ \\ \\ \\ \end{array}
    a,b,c\in\mathbb{R}\right\}\:.
    \end{equation}

    \item If $k=1$ and $l=0$, the set $\Phi'$ consists of the two roots $\pm (\theta_1 - \theta_3)$ and
    thus is a root system of type $A_1$. Since
    $\text{Norm}_{\mathfrak{so}(7)} \mathfrak{u}(1)_{1,0}$ is the compact real form of
    $\text{Norm}_{\mathfrak{so}(7,\mathbb{C})}(\mathfrak{u}(1)_{1,0} \otimes\mathbb{C})$, it is isomorphic to
    $\mathfrak{su}(2)\oplus 2\mathfrak{u}(1)$. The semisimple part of the normalizer acts irreducibly on $V_1\oplus V_3$ such
    that $(e_1',e_2',e_5',e_6')$ is identified with the standard basis of $\mathbb{C}^2$.

    \item If $k=l=1$, $\Phi'$ consists of the two pairs $\pm\theta_1$ and $\pm (\theta_2 - \theta_3)$. Since $A_1\times A_1$ is the
    Dynkin diagram of $\mathfrak{sl}(2,\mathbb{C}) \oplus\mathfrak{sl}(2,\mathbb{C})$, the real Lie algebra
    $\text{Norm}_{\mathfrak{so}(7)}\mathfrak{u}(1)_{1,1}$ has to be isomorphic to $2\mathfrak{su}(2) \oplus
    \mathfrak{u}(1)$. One of the two simple summands acts by its three-dimensional representation on
    $\text{span}(e_1',e_2',e_7')$. The other one acts irreducibly on $V_2\oplus V_3$ such that we can identify
    $(e_3',e_4',e_5',e_6')$ with the standard basis of $\mathbb{C}^2$.
\end{itemize}

By intersecting $\text{Norm}_{\mathfrak{so}(7)}\mathfrak{u}(1)_{k,l}$ with $\mathfrak{g}_2$ we are able to determine
$\text{Norm}_{\mathfrak{g}_2}\mathfrak{u}(1)_{k,l}$. Again, we consider the three cases separately.

\begin{itemize}
    \item Let $N^{k,l}$ be a generic Aloff-Wallach space. $\text{Norm}_{\mathfrak{g}_2}\mathfrak{u}(1)_{k,l}$
    is isomorphic to the two-dimensional subalgebra of (\ref{CsaSO7}) which is a Cartan subalgebra of
    $\mathfrak{g}_2$.

    \item Let $k=1$ and $l=0$. By an explicit calculation, we see that the Lie algebra action of the semisimple part of
    $\text{Norm}_{\mathfrak{so}(7)}\mathfrak{u}(1)_{1,0}$ on $\omega\in \bigwedge^3 \text{Im}(\mathbb{O})^{\ast}$ is trivial.
    The semisimple part therefore is a subalgebra of $\mathfrak{g}_2$. Since $\mathfrak{g}_2$ is of rank
    two, $\text{Norm}_{\mathfrak{g}_2} \mathfrak{u}(1)_{1,0}$ is isomorphic to $\mathfrak{su}(2) \oplus \mathfrak{u}(1)$.

    \item Finally, let $k=l=1$. Since $\text{Norm}_{\mathfrak{g}_2}\mathfrak{u}(1)_{1,1}$ contains the Cartan subalgebra of
    $\mathfrak{g}_2$, it is of rank two. There are four subalgebras of $2\mathfrak{su}(2)\oplus \mathfrak{u}(1)$ which have
    rank two. One of them is $2\mathfrak{su}(2)$. The other three are isomorphic to $\mathfrak{su}(2) \oplus \mathfrak{u}(1)$
    where the semisimple part is either an ideal of $2\mathfrak{su}(2)$ or diagonally embedded. By similar techniques as in the
    previous case, we see that the semisimple part is diagonally embedded. More precisely, it acts irreducibly on
    $\text{span}(e_1',e_2',e_7')$ and $\text{span}(e_3',e_4',e_5',e_6')$.
\end{itemize}

We are now able to describe both normalizers of the Lie group $U(1)_{k,l}$. For
the following considerations, we denote the identity component of
a Lie group $G$ by $G_e$. $(\text{Norm}_{SO(7)} U(1)_{k,l})_e$
acts transitively and almost freely on any connected component of
$\text{Norm}_{SO(7)} U(1)_{k,l} / \text{Norm}_{G_2} U(1)_{k,l}$.
For our purpose, it therefore suffices to describe the quotient
$(\text{Norm}_{SO(7)} U(1)_{k,l})_e /$ $(\text{Norm}_{G_2}
U(1)_{k,l})_e$, which covers any connected component of\linebreak
$\text{Norm}_{SO(7)} U(1)_{k,l} / \text{Norm}_{G_2} U(1)_{k,l}$.

\begin{itemize}
    \item If $N^{k,l}$ is generic, we have

    \begin{equation}
    (\text{Norm}_{SO(7)}U(1)_{k,l})_e/(\text{Norm}_{G_2}U(1)_{k,l})_e\cong U(1)^3/U(1)^2\cong S^1\:.
    \end{equation}

    $\!\!\!\!\!\!\!$Let $T$ be a one-dimensional connected Lie subgroup of the maximal torus of $SO(7)$ whose Lie algebra is
    (\ref{CsaSO7}). We choose $T$ in such a way that $G_2\cap T$ is discrete. The action of $T$ on a fixed homogeneous
    $G_2$-structure $\omega$ generates a connected component of the space of all $SU(3)$-invariant $G_2$-structures which have
    the same associated metric and orientation as $\omega$.

    \item If $k=1$ and $l=0$, we have

    \begin{equation}
    \begin{array}{rcl}
    (\text{Norm}_{SO(7)}U(1)_{k,l})_e/(\text{Norm}_{G_2}U(1)_{k,l})_e & \cong & (U(2)\times U(1))/U(2) \\
    & \cong & S^1\:.\\
    \end{array}
    \end{equation}

    $\!\!\!\!\!\!\!$The space of all $SU(3)$-invariant $G_2$-structures with a fixed associated metric and orientation
    can therefore be described as in the previous case.

    \item The semisimple part of $\text{Norm}_{\mathfrak{g}_2}\mathfrak{u}(1)_{1,1}$ is diagonally embedded into the
    semisimple part $2\mathfrak{su}(2)$ of $\text{Norm}_{\mathfrak{so}(7)}\mathfrak{u}(1)_{1,1}$. We denote the ideal of
    $2\mathfrak{su}(2)$ which acts non-trivially on $\text{span}(e_1',e_2',e_7')$ by $\mathfrak{su}(2)'$. The abelian part of
    $\text{Norm}_{\mathfrak{g}_2}\mathfrak{u}(1)_{1,1}$ is not a subgroup of $\mathfrak{su}(2)'$. We can conclude that
    the Lie group which corresponds to $\mathfrak{su}(2)'$ is isomorphic to $SO(3)$ and acts transitively and freely
    on $(\text{Norm}_{SO(7)}$ $U(1)_{k,l})_e/$ $(\text{Norm}_{G_2}U(1)_{k,l})_e$. The set of all $SU(3)$-invariant
    $G_2$-structures on $N^{1,1}$ whose associated metric and orientation is
    fixed can therefore be generated by an $SO(3)$-action on a single $G_2$-structure.
\end{itemize}

Until now, we have only determined the type of the connected components of
$\text{Norm}_{SO(7)}U(1)_{k,l}/\text{Norm}_{G_2}U(1)_{k,l}$, but
not their number. We therefore cannot rule out that there are
further homogeneous $G_2$-structures which cannot be obtained by
the above $U(1)$- or $SO(3)$-actions. It may be possible that the
additional $G_2$-structures could be extended to new examples of
parallel cohomogeneity-one $\text{Spin}(7)$-structures. We will
nevertheless not discuss this issue further. There are three
reasons for our decision. First, the study of the
$\text{Spin}(7)$-structures whose restriction to a principal orbit
is one of the $G_2$-structures which we have constructed so far is
already a rewarding project even if further examples existed.
Second, it may be possible that we switch to another connected
component of $\text{Norm}_{SO(7)}U(1)_{k,l}/\text{Norm}_{G_2}U(1)_{k,l}$ if we
change the sign of some of the functions $a$, $a_1$, $a_2$, $b$,
$c$, and $f$. Therefore, we may already describe more than one or
even all connected components by our ansatz for $(f_i)_{1\leq i\leq 7}$.
A third reason is that the results which we have found are sufficient to
make statements on the holonomy of our metrics.

We start with the generic Aloff-Wallach spaces $N^{k,l}$. Let $g$
be a fixed metric on $N^{k,l}$ and $\omega$ be a $G_2$-structure
whose associated metric is $g$. By an explicit calculation we see
that $\omega$ is cocalibrated only if it is induced by the basis
$(f_i)_{1\leq i\leq 7}$ of $\mathfrak{m}$ which we have defined on
page \pageref{CBasisNkl}. Let $\widetilde{g}$ be a
cohomogeneity-one metric whose holonomy is contained in Spin($7$).
We assume that there is a principal orbit such that the
restriction of $\widetilde{g}$ to that orbit is $g$. Since the
holonomy bundle is $SU(3)$-invariant, any parallel
Spin($7$)-structure $\Omega$ whose associated metric is
$\widetilde{g}$ is $SU(3)$-invariant, too. Any parallel
Spin($7$)-structure induces a cocalibrated $G_2$-structure on the
principal orbit. Since the set of all $SU(3)$-invariant cocalibrated
$G_2$-structures is discrete, the set of all invariant parallel
Spin($7$)-structures is discrete, too. According to Lemma
\ref{SU4Lemma}, the holonomy of $\widetilde{g}$ cannot be
$SU(4)$ or one of its subgroups.

If the holonomy was $G_2$, there would exist a parallel vector
field $X$ on the manifold. Moreover, the space of all parallel
vector fields would be one-dimensional. Since that space is
$SU(3)$-invariant, $X$ is invariant, too, and of type $c_1(t)
\cdot \tfrac{\partial}{\partial t} + \tfrac{c_2(t)}{f(t)}\cdot
e_7$. The dual $c_1(t)\cdot dt + c_2(t)\cdot f(t)\cdot e^7$ of $X$
has to be a closed one-form. By calculating the exterior derivative,
we see that $c_2$ has to vanish for all $t$. Since the length of
$X$ has to be constant, $c_1$ has to be constant, too. If
$\tfrac{\partial}{\partial t}$ was parallel, we would have
$a'=b'=c'=f'=0$. In that situation, the manifold would be the
product of $N^{k,l}$ with a parallel $SU(3)$-invariant
$G_2$-structure and an interval. This is impossible since
$N^{k,l}$ is not a torus and we thus have proven that the holonomy
is exactly Spin($7$).

If the principal orbit is $N^{1,0}$, it follows by the same
arguments that the holonomy is all of Spin($7$). In the case where
$k=l=1$, the situation is more complicated, since the space of all
$G_2$-structures with a fixed associated metric is
three-dimensional. Let $\omega$ be the $G_2$-structure on
$N^{1,1}$ which is induced by the basis $(f_i)_{1\leq i\leq 7}$
from page \pageref{CBasisN11}. If the holonomy is contained in
$SU(4)$, there exists a map

\begin{equation}
\widetilde{\omega}:[0,\epsilon) \rightarrow {\bigwedge}^3 \mathfrak{m}^{\ast}
\end{equation}

such that $\widetilde{\omega}(0)=\omega$, $d\ast\widetilde{\omega}(s)=0$
for all $s\in[0,\epsilon)$, and the metric which is associated to
$\widetilde{\omega}(s)$ is the same as of $\omega$. We have proven
that $\widetilde{\omega}(s)$ can be obtained by the action of an
$A(s)\in SO(3)$ on $(e_1',e_2',e_7')$. We consider the case where
$A(s)$ is of type

\begin{equation}
\label{Avont}
\left(\,
\begin{array}{ccc}
\cos{s} & -\sin{s} & 0 \\
\sin{s} & \cos{s} & 0 \\
0 & 0 & 1 \\
\end{array}
\,\right)
\end{equation}

With the help of a short MAPLE program, we can show that if
$s\notin \pi\mathbb{Z}$, $d\ast\widetilde{\omega}(s)=0$ is
equivalent to $a_1(t) + a_2(t) = 0$. Under this assumption, it
follows from (\ref{HolRedN11}) that $a_1(t)^2 = b(t)^2 + c(t)^2$.
Under these assumptions (\ref{HolRedN11}) is explicitly solvable
and we obtain the metrics of Bazaikin and Malkovich \cite{Baz2}.
In \cite{Baz2} it is proven that the holonomy is all of $SU(4)$,
except in a limiting case where we obtain Calabis \cite{Calabi}
hyperk\"ahler metric on $T^{\ast} \mathbb{CP}^2$.

If we replace the $A$ from (\ref{Avont}) by another one-parameter subgroup
of $SO(3)$, we obtain more complicated conditions on the
Spin($7$)-structures, which may or may not be satisfiable. The question if there
are further metrics whose holonomy is a proper subgroup of Spin($7$) is
a subject of future research.

At the end of this section, we classify all possible singular orbits of a
cohomogeneity-one manifold whose principal orbit is an Aloff-Wallach
space. This classification can also be found in Gambioli \cite{Gambioli}.
Our results are summarized by the following lemma.

\begin{Le} \label{NklSing}
Let $U(1)$ be embedded into $SU(3)$ as $U(1)_{k,l}$. Furthermore, let $K$ be a connected, closed group with
$U(1)_{k,l}\subsetneq K\subseteq SU(3)$. The Lie algebra of $K$ we denote by $\mathfrak{k}$. In this situation, $\mathfrak{k}$
and $K$ can be found in the table below. Moreover, $K/U(1)_{k,l}$ and $SU(3)/K$ satisfy the following topological conditions.

\begin{center}
\begin{small}
\begin{tabular}{|l|l|l|l|l|}
\hline

$\mathfrak{k}$ & $K$ & $K/U(1)_{k,l}$ & $SU(3)/K$ & Condition on
$k$ and $l$\\

\hline \hline

$2\mathfrak{u}(1)$ & $U(1)^2$ & $\cong S^1$ & $= SU(3)/U(1)^2$ &
\\

\hline

$\mathfrak{su}(2)$ & $SU(2)$ & $\cong S^2$ & $\cong S^5$ & $k\cdot
l\cdot (-k-l)=0$\\

\hline

$\mathfrak{su}(2)$ & $SO(3)$ & $\cong S^2$ & $=SU(3)/SO(3)$ &
$k\cdot l\cdot (-k-l)=0$\\

\hline

$\mathfrak{su}(2)\oplus\mathfrak{u}(1)$ & $U(2)$ & $\cong
S^3/\mathbb{Z}_{|k+l|}$ & $\cong\mathbb{CP}^2$ & $(k,l)
\neq (1,-1)$\\

\hline

$\mathfrak{su}(2)\oplus\mathfrak{u}(1)$ & $U(2)$ & $\cong
S^2\times S^1$ & $\cong\mathbb{CP}^2$ & $(k,l)=(1,-1)$\\

\hline

$\mathfrak{su}(3)$ & $SU(3)$ & $=N^{k,l}\not\cong S^7/\Gamma$ & &
\\

\hline
\end{tabular}
\end{small}
\end{center}

In the above table, $\Gamma$ denotes an arbitrary discrete
subgroup of $O(8)$ and the group $\mathbb{Z}_{|k+l|}$, by which we
divide $S^3$, is explicitly described by (\ref{Zkl}).
\end{Le}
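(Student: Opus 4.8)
The plan is to classify the connected closed subgroups $K$ with $U(1)_{k,l} \subsetneq K \subseteq SU(3)$ purely by their Lie algebras, and then for each such $\mathfrak{k}$ to compute the two coset spaces. First I would observe that $\mathfrak{k}$ is a subalgebra of $\mathfrak{su}(3)$ strictly containing the line $\mathfrak{u}(1)_{k,l}$, so $\dim \mathfrak{k} \in \{2,3,4,8\}$ — the possibilities are dictated by the classification of subalgebras of $\mathfrak{su}(3)$. Since $\mathfrak{u}(1)_{k,l}$ is a regular element of the Cartan subalgebra $2\mathfrak{u}(1)$ for generic $(k,l)$, the only $2$-dimensional option is the full Cartan $2\mathfrak{u}(1)$; the $3$-dimensional options are the $\mathfrak{su}(2)$-subalgebras containing that line; the $4$-dimensional option is $\mathfrak{su}(2)\oplus\mathfrak{u}(1)$ (a $\mathfrak{u}(2)$); and $\dim\mathfrak{k}=8$ gives $\mathfrak{su}(3)$ itself. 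The case $\mathfrak{k} = 2\mathfrak{u}(1)$ always occurs and gives $K = U(1)^2$ with $K/U(1)_{k,l} \cong S^1$ and $SU(3)/K = SU(3)/U(1)^2$, as claimed. The case $\mathfrak{k} = \mathfrak{su}(3)$ gives $K = SU(3)$ (the only connected group with that algebra, as $SU(3)$ is simply connected) and $K/U(1)_{k,l} = N^{k,l}$; that $N^{k,l}$ is not a quotient of $S^7$ I would justify via the nontrivial cohomology $H^4(N^{k,l},\mathbb{Z}) = \mathbb{Z}_{k^2+lk+l^2}$ together with $\pi_i$ of $N^{k,l}$, since a free discrete quotient of $S^7$ has the rational cohomology of $S^7$ and is $6$-connected, contradicting $\dim N^{k,l}=7$ unless $k^2+lk+l^2=1$, which does not give the sphere either.

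Next I would treat the $\mathfrak{su}(2)$ case. A subalgebra $\mathfrak{su}(2)\subseteq\mathfrak{su}(3)$ containing $\mathfrak{u}(1)_{k,l}$ as a maximal torus forces $\mathfrak{u}(1)_{k,l}$ to be (conjugate to) a coroot of $\mathfrak{su}(3)$, which up to the Weyl-group action means one of the three entries of $(k,l,-k-l)$ vanishes — this is precisely the condition $k\cdot l\cdot(-k-l)=0$ recorded in the table. Given such an $\mathfrak{su}(2)$, there are two connected groups integrating it inside $SU(3)$, namely $SU(2)$ and $SO(3) = SU(2)/\mathbb{Z}_2$, depending on whether the center $\mathbb{Z}_2$ of $SU(2)$ is contained in $U(1)_{k,l}$ or not. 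In both cases $K/U(1)_{k,l}$ is a $2$-sphere (a maximal torus of $SU(2)$ or $SO(3)$ acting, the quotient being the flag manifold $\mathbb{CP}^1$). For $K=SU(2)$ the homogeneous space $SU(3)/SU(2)$ is $S^5$ (standard), while for $K=SO(3)$ one gets the Wu manifold $SU(3)/SO(3)$, which I would simply name.

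The most delicate case is $\mathfrak{k} = \mathfrak{su}(2)\oplus\mathfrak{u}(1) = \mathfrak{u}(2)$, and this is where I expect the main obstacle. Here $K$ is (up to conjugacy) the block $U(2)\subseteq SU(3)$ stabilizing a line in $\mathbb{C}^3$, so $SU(3)/K \cong \mathbb{CP}^2$ always. The subtlety is the topology of $K/U(1)_{k,l}$: one must understand how the circle $U(1)_{k,l}$ sits inside $U(2)$, and in particular whether it meets the center of $U(2)$ nontrivially. A computation shows $K/U(1)_{k,l}$ is an $S^1$-bundle over $\mathbb{CP}^1 = S^2$ (the quotient $U(2)/(U(1)_{k,l}\cdot SU(2))$ being a point or the base), and the Euler class of this circle bundle is $\pm(k+l)$; hence for $k+l \ne 0$ one gets $S^3/\mathbb{Z}_{|k+l|}$ (a lens space, with the cyclic group described explicitly in equation~(\ref{Zkl})), while for $(k,l)=(1,-1)$ the bundle is trivial and $K/U(1)_{k,l} \cong S^2\times S^1$. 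The case $(k,l)=(1,-1)$ also has to be excluded from the generic $\mathbb{CP}^2$-row because there $U(1)_{k,l}$ already lies in $SU(2)\subseteq U(2)$ as a maximal torus, changing the embedding; keeping careful track of which conjugate of $U(2)$ contains $U(1)_{k,l}$, and of the induced embedding of the circle, is the bookkeeping that requires the most care. Assembling these cases yields exactly the six rows of the table, completing the proof.
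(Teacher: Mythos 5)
Your proposal follows the same basic strategy as the paper --- classifying $\mathfrak{k}$ by dimension and then computing the coset spaces --- but there is a genuine error in the three-dimensional case, plus several points that are handled more loosely than the paper does.

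The main gap: you assert that ``given such an $\mathfrak{su}(2)$, there are two connected groups integrating it inside $SU(3)$, namely $SU(2)$ and $SO(3)$, depending on whether the center $\mathbb{Z}_2$ of $SU(2)$ is contained in $U(1)_{k,l}$ or not.'' This is false. For any Lie group $G$ and any Lie subalgebra $\mathfrak{k}\subseteq\mathfrak{g}$, there is a \emph{unique} connected Lie subgroup of $G$ with Lie algebra $\mathfrak{k}$; it is determined by $\mathfrak{k}$ alone, not by any auxiliary circle. The dichotomy between $SU(2)$ and $SO(3)$ in the table comes from the fact that $\mathfrak{su}(3)$ contains two \emph{non-conjugate} embeddings of $\mathfrak{su}(2)$: the ``regular'' one sitting in a $2\times 2$ block (whose connected integral subgroup is $SU(2)$, since $-I_2\oplus 1$ is a nontrivial element of $SU(3)$) and the ``principal'' one consisting of real antisymmetric matrices (whose connected integral subgroup is $SO(3)$). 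The paper constructs both explicitly when $(k,l)=(1,0)$: the span of $\mathfrak{u}(1)_{1,0}$ and $V_2$ gives a block $\mathfrak{su}(2)$, whereas a subalgebra transversely embedded into $V_1\oplus V_3$ turns out to be conjugate to the standard $\mathfrak{so}(3)$. This explicit construction is also where the condition $k\cdot l\cdot(-k-l)=0$ actually gets used: the transverse embedding is only possible because $V_1$ and $V_3$ are equivalent $\mathfrak{u}(1)_{1,0}$-modules, which forces $N^{k,l}$ to be exceptional. Your coroot argument is a reasonable shortcut for the condition on $(k,l)$, but the row-by-row distinction between $SU(2)$ and $SO(3)$ cannot be recovered from it.

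Two further remarks. First, the dismissal of $\dim\mathfrak{k}\in\{5,6,7\}$ via an appeal to ``the classification of subalgebras of $\mathfrak{su}(3)$'' is hand-waved; the paper supplies a short clean argument: since roots come in pairs $\pm\alpha$, $\dim\mathfrak{k}\equiv\text{rank}\,\mathfrak{k}\ (\text{mod}\ 2)$, and together with $\text{rank}\,\mathfrak{k}\leq 2$ this rules those dimensions out. Second, your Euler-class argument for the $\mathfrak{u}(2)$ case is a legitimate alternative to what the paper does; the paper instead considers the covering map $\pi:SU(2)\to K/U(1)_{k,l}$, $\pi(h)=hU(1)_{k,l}$, and explicitly computes its kernel $SU(2)\cap U(1)_{k,l}$ to be the cyclic group (\ref{Zkl}), which more directly identifies the lens space and makes the degeneration at $(k,l)=(1,-1)$ (where the intersection becomes the whole maximal torus) transparent. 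Either route works, but the Euler-class computation should be carried out rather than asserted. Finally, the paper expends substantial effort on the exceptional Aloff-Wallach spaces $N^{1,0}$ and $N^{1,1}$, where $\mathfrak{m}$ contains equivalent or trivial isotropy modules and hence continuous families of candidate subalgebras $\mathfrak{k}$ arise; showing these collapse to the rows of the table (up to conjugacy) is a nontrivial verification that your proposal skips over.
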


\begin{proof} The fact that $\mathfrak{k}$ has to be an $\mathfrak{u}(1)_{k,l}$-module, reduces the number of subspaces of
$\mathfrak{su}(3)$ which we have to consider. We have to check for all $\mathfrak{u}(1)_{k,l}$-modules $\mathfrak{k}$ with
$\mathfrak{u}(1)_{k,l} \subsetneq \mathfrak{k}\subseteq\mathfrak{su}(3)$ if they are closed under the Lie bracket and if
$K/U(1)_{k,l}$ is covered by a sphere. If $N^{k,l}$ is an exceptional Aloff-Wallach space, $\mathfrak{m}$ contains pairs
$(U,U')$ of equivalent submodules. In that situation, there are infinitely many submodules of $\mathfrak{k}$ which are
transversely embedded into $U\oplus U'$. Therefore, we often have to distinguish between the different types of Aloff-Wallach
spaces in the course of this proof. We consider each of the possible values of $\dim{\mathfrak{k}}$ separately.

\underline{$\mathfrak{k}=\mathfrak{u}(1)_{k,l}\oplus W$, $\dim{W}=1$:} If $(k,l)\neq(1,1)$, $V_4$ is the only one-dimensional
submodule of $\mathfrak{m}$ and the statements of the lemma are satisfied. If $k=l=1$, $V_1$ is trivial. We can choose the
generator of $W$ as an arbitrary $x\in (V_1 \oplus V_4)\setminus\{0\}$ and obtain the same statements on the singular
orbit.

\label{WNkl} \underline{$\mathfrak{k}=\mathfrak{u}(1)_{k,l}\oplus W$, $\dim{W}=2$:} If $N^{k,l}$ is generic, the only
two-dimensional submodules of $\mathfrak{m}$ are $V_1$, $V_2$, and $V_3$. In any case $\mathfrak{k}$ is not closed under the
Lie bracket.

Next, we assume that $k=1$ and $l=0$. $\mathfrak{u}(1)_{1,0}\oplus V_2$ is a Lie algebra, which is isomorphic to
$\mathfrak{su}(2)$ and we have $K=SU(2)$ and $K/U(1)_{1,0} \cong S^2$. Since the modules $V_1$ and $V_3$ are
$\mathfrak{u}(1)_{1,0}$-equivariantly isomorphic, there may exist a two-dimensional space $W$ which is transversely embedded
into $V_1\oplus V_3$ such that $\mathfrak{k}$ is closed under the Lie bracket. It turns out that the possible $\mathfrak{k}$
are precisely

\begin{equation}
\mathfrak{k}=\text{span}(e_8,\:\tfrac{1}{2}\sqrt{2}\:e_1 + \tfrac{1}{2}
\sqrt{2}\:e_5,\:-\tfrac{1}{2}\sqrt{2}\:e_2 - \tfrac{1}{2}\sqrt{2}\:e_6)
\end{equation}

and its conjugates with respect to $U(1)_{1,1}$. Since $\mathfrak{k}$ is conjugate to the standard embedding of
$\mathfrak{so}(3)$ into $\mathfrak{su}(3)$, we have $K\cong SO(3)$, $K/U(1)_{1,-1}\cong S^2$ and the singular
orbit is the symmetric space $SU(3)/SO(3)$.

If $k=l=1$, $\mathfrak{k}$ again cannot be a Lie algebra.

\underline{$\mathfrak{k}=\mathfrak{u}(1)_{k,l}\oplus W$, $\dim{W}=3$:} Let $N^{k,l}$ be a generic Aloff-Wallach space. In this
situation, we have $W=V_i\oplus V_4$ for an $i\in\{1,2,3\}$. As a Lie algebra $\mathfrak{k}$ is isomorphic to
$\mathfrak{u}(2)$. We assume without loss of generality that $i=1$. The Lie group $K$ is given by
$S(U(2)\times U(1))$.

We analyze the topology of $K/U(1)_{k,l}$. Let $\pi:SU(2)\rightarrow K/U(1)_{k,l}$ be the map with $\pi(h) := hU(1)_{k,l}$,
where $SU(2)$ is embedded into $SU(3)$ such that its Lie algebra is $\mathfrak{u}(1)_{1,-1}\oplus V_1$. It is easy to see that $\pi$
is a covering map. Its kernel is $SU(2)\cap U(1)_{k,l}$. Since $k$ and $l$ are coprime, this intersection is, except for
$(k,l)=(1,-1)$,

\begin{equation}
\label{Zkl}
\left\{\left(\,\begin{array}{cc}
e^{2\pi i\tfrac{m}{k+l}} & 0 \\
0 & e^{-2\pi i\tfrac{m}{k+l}}\\
\end{array}\,\right)\right.
\left|\begin{array}{l} \\ \\ \\ \end{array}
m\in\mathbb{Z}\right\}\:.
\end{equation}

The quotient $K/U(1)_{k,l}$ thus is the lens space $L(k+l,1)$. For reasons of brevity, we will denote $K/U(1)_{k,l}$ by
$S^3/\mathbb{Z}_{|k+l|}$, since $\mathbb{Z}_{|k+l|}$ will always be the above discrete group.  The
quotient $SU(3)/K$ is diffeomorphic to $\mathbb{CP}^2$.

Next, we consider the exceptional case $k=1$, $l=0$. If we choose $W$ as $V_i\oplus V_4$ with $i\in\{1,3\}$, $K/U(1)_{1,0}$ is
the sphere $S^3$. For $W=V_2\oplus V_4$, we have $K/U(1)_{1,0} = S^2\times S^1$. In order to finish the first exceptional
case, we have to check if we can choose $W=W'\oplus V_4$ with $W'\subseteq V_1\oplus V_3$ but $W'\notin\{V_1,V_3\}$. Since the
Cartan subalgebra $\text{span}(e_7,e_8)$, which we shortly denote by $2\mathfrak{u}(1)$, is contained in $\mathfrak{k}$, $W'$
has to be a $2\mathfrak{u}(1)$-module. The $q$-orthogonal complement of $2\mathfrak{u}(1)$ in $\mathfrak{su}(3)$ decomposes
with respect to $2\mathfrak{u}(1)$ into $V_1\oplus V_2\oplus V_3$ and the three modules are pairwise inequivalent. Therefore,
the case $W'\notin\{V_1,V_3\}$ cannot happen.

We finally consider the case $k=l=1$. $\mathfrak{k}$ has to be isomorphic to the Lie algebra $\mathfrak{su}(2)
\oplus \mathfrak{u}(1)$. As above, we decompose $\mathfrak{k}$ into the direct sum $\mathfrak{h} \oplus W'$ of
$\mathfrak{u}(1)_{1,1}$-modules, where $\mathfrak{h}$ is a Cartan subalgebra of $\mathfrak{k}$ which contains
$\mathfrak{u}(1)_{1,1}$ and $W'$ is its $q$-orthogonal complement. Since $\text{span}(e_1,e_2,e_7)$ is the centralizer of
$\mathfrak{u}(1)_{1,1}$, $\mathfrak{h}$ can be any of the following spaces.

\begin{equation}
\mathfrak{h}=\mathfrak{u}(1)_{1,1}\oplus\text{span}(x)\quad\text{with}\:\:\:
x\in\text{span}(e_1,e_2,e_7)\setminus\{0\}
\end{equation}

$W'$ is either a submodule of $\text{span}(e_1,e_2,e_7)$ or of $V_2\oplus V_3$. In the first case, it has to be the complement
of $\text{span}(x)$ in $\text{span}(e_1,e_2,e_7)$ and $\mathfrak{k}$ therefore is $2\mathfrak{u}(1)\oplus V_1$, which yields
$K/U(1)_{1,1} = S^3/\mathbb{Z}_2$ and $SU(3)/K = \mathbb{CP}^2$.

We turn to the case where $W'\subseteq V_2\oplus V_3$. There exists an

\begin{equation}
A\in\left\{\left(\,
\begin{array}{cc}
\hhline{-~}
\multicolumn{1}{|c|}{A'} & \\
\hhline{--}
& \multicolumn{1}{|c|}{1} \\
\hhline{~-}
\end{array}\,\right)\right|\left.
\begin{array}{l} \\ \\ \end{array}\!\!\!\!
A'\in SU(2)\right\}\quad
\text{with}\:\:\:
A\:\mathfrak{h}\:A^{-1}= 2\mathfrak{u}(1)\:.
\end{equation}

We conjugate our decomposition of $\mathfrak{k}$ by $A$ and obtain

\begin{equation}
A\:\mathfrak{k}\:A^{-1}=2\mathfrak{u}(1)\oplus AW'A^{-1}\:.
\end{equation}

Since $AW'A^{-1}$ is a $2\mathfrak{u}(1)$-module, we can prove by the same arguments as in the case $(k,l)=(1,0)$ that
$AW'A^{-1}$ is either $V_2$ or $V_3$. In both cases we obtain $K/U(1)_{1,1}=S^3$ and $SU(3)/K = \mathbb{CP}^2$.

\underline{$\mathfrak{k}=\mathfrak{u}(1)_{k,l}\oplus W$, $\dim{W}\in \{4,6\}$:} Since for any root $\alpha$ of $\mathfrak{k}$,
$-\alpha$ is a root, too, we have $\dim{\mathfrak{k}}\equiv \text{rank}\:\mathfrak{k}\:\:\:(\text{mod}\: 2)$. Therefore, the
rank of $\mathfrak{k}$ has to be odd. Since $\text{rank}\:\mathfrak{su}(3)=2$, the only possibility for
$\text{rank}\:\mathfrak{k}$ is in fact $1$. Since the only Lie algebras of rank $1$ which belong to a compact Lie group
are $\mathfrak{u}(1)$ and $\mathfrak{su}(2)$, we can exclude these cases.

\underline{$\mathfrak{k}=\mathfrak{u}(1)_{k,l}\oplus W$, $\dim{W}=5$:} The only six-dimensional Lie algebra of rank $\leq 2$
which belongs to a compact Lie group is $\mathfrak{su}(2)\oplus\mathfrak{su}(2)$. There is no Lie subalgebra of
$\mathfrak{su}(3)$ of this type and we therefore can exclude this case.

\underline{$\mathfrak{k}=\mathfrak{u}(1)_{k,l}\oplus W$, $\dim{W}=7$:} In this case, $\mathfrak{k}$ is all of
$\mathfrak{su}(3)$. Since $H^4(N^{k,l},\mathbb{Z}) = \mathbb{Z}_{k^2 + kl + l^2}$, $N^{k,l}$ is not a quotient of
$S^7$ by a discrete subgroup $\Gamma\subseteq O(8)$.
\end{proof}

\begin{Conv}
\label{NklConv2} We want to treat the three Lie algebras $2\mathfrak{u}(1)\oplus V_i$ with $i\in\{1,2,3\}$ in a uniform
manner. Therefore, we will drop the convention $k\geq l\geq 0$ whenever we consider the case $\mathfrak{k}\cong
\mathfrak{u}(2)$. This will allow us to fix $\mathfrak{k}$ as $2\mathfrak{u}(1)\oplus V_1$. Since we can replace $(k,l)$ by
$(-k,-l)$, we can still assume that $k\geq l$. We have implicitly used this convention in the statement of the lemma when
we described the topology of $K/U(1)_{k,l}$ as $S^3/\mathbb{Z}_{|k + l|}$ instead of $S^3/\mathbb{Z}_{|k|}$ or
$S^3/\mathbb{Z}_{|l|}$, which would be the case if $i\in\{2,3\}$.
\end{Conv}

The results of this section can be summarized as follows.

\begin{Th} \label{NklThm1} Let $(M,\Omega)$ be a $\text{Spin}(7)$-orbifold with a cohomogeneity-one action of $SU(3)$ which
preserves $\Omega$ and whose principal orbit is an Aloff-Wallach space. The metric associated to $\Omega$ we denote by $g$. In
this situation, the following statements are true.

\begin{enumerate}
    \item
    \begin{enumerate}
        \item If $N^{k,l}$ is generic, the matrix representation of $g$ with respect to the basis $(e_1,\ldots,e_7)$ from page
        \pageref{qbasisAW} is of type (\ref{MetricsNkl}).
        \item If $k=1$ and $l=0$, the matrix representation of $g$ with respect to the basis $(e_1,e_2,e_5,e_6,e_3,e_4,e_7)$
        is of type (\ref{MetricsN10}).
        \item If $k=1$ and $l=1$, the matrix representation of $g$ with respect to the basis $(e_1,e_2,e_7,e_3,e_4,e_5,e_6)$
        is of type (\ref{MetricsN11}).
    \end{enumerate}
    \item We assume that $N^{k,l}$ is generic or $N^{1,0}$ and that $g$ is diagonal with respect to the basis from page
    \pageref{qbasisAW}. Let $\mathcal{S}'$ be a connected component of the space $\mathcal{S}$ of all $SU(3)$-invariant
    Spin($7$)-structures on $M$ with the same associated metric as $\Omega$. We assume that $\mathcal{S}'$ contains
    a four-form $\widetilde{\Omega}$ which is obtained from a basis of type (\ref{CBasisNkl}). In this situation, $\Omega$
    can be parallel only if it coincides with $\widetilde{\Omega}$. In other words, (\ref{CBasisNkl}) is up to other connected
    components of $\mathcal{S}$ the only possible basis for a diagonal metric with holonomy contained in Spin($7$).
    \item
    \begin{enumerate}
        \item If $N^{k,l}$ is generic or $N^{1,0}$ and $\Omega$ is determined by a basis of type (\ref{CBasisNkl}), $\Omega$
        is parallel if and only if the system (\ref{HolRedNkl}) is satisfied. In that case, the holonomy of $g$ is all of
        $\text{Spin}(7)$.
        \item If $k=l=1$ and $\Omega$ is determined by a basis of type (\ref{CBasisN11}), $\Omega$ is parallel if and only if
        the system (\ref{HolRedN11}) is satisfied. In this case, the holonomy of $g$ is a subgroup of Spin($7$). If we also
        have $a_1(t)+a_2(t)=0$ for all $t$, the holonomy of $g$ is $SU(4)$ except in the case where we obtain the Calabi
        metric which has holonomy $Sp(2)$.
    \end{enumerate}
\end{enumerate}

If $M$ has a singular orbit, which has to be the case if $(M,\Omega)$ is parallel and complete, it can be found in the
table of Lemma \ref{NklSing}. For any choice of the principal and the singular orbit, the orbifold $M$ is a manifold,
except in the case where the singular orbit is $\mathbb{CP}^2$ and $|k+l|\neq 1$. In that case, $M$ is a
$D^4/\mathbb{Z}_{|k+l|}$-bundle over $\mathbb{CP}^2$.
\end{Th}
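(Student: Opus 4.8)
The plan is to assemble the statement from material already established in this section; essentially no new computation is needed, only careful bookkeeping of which case is which. For part (1) I would argue as follows. An $SU(3)$-invariant symmetric bilinear form on $N^{k,l}$ is the same as an $\text{Ad}_{U(1)_{k,l}}$-invariant symmetric bilinear form on $\mathfrak{m}$, and by Schur's lemma such a form is determined on each isotypic component of the decomposition $\mathfrak{m}=V_1\oplus V_2\oplus V_3\oplus V_4$ from (\ref{V1V2V3}). For generic $N^{k,l}$ the weights (\ref{WeightsU1kl}) are distinct and nonzero, so $V_1,V_2,V_3$ are pairwise inequivalent two-dimensional irreducibles and $V_4$ is trivial; the form is block-diagonal with each two-dimensional block a multiple of the identity, which is (\ref{MetricsNkl}). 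For $N^{1,0}$ one has $V_1\cong V_3$, so there is in addition a two-dimensional space of intertwiners $V_1\to V_3$, producing the off-diagonal pattern of (\ref{MetricsN10}). For $N^{1,1}$ the weight $k-l$ vanishes, so $V_1$ is trivial and combines with $V_4$ into a three-dimensional trivial summand $\text{span}(e_1,e_2,e_7)$ carrying an arbitrary symmetric form, while $V_2\cong V_3$ contributes an intertwiner block; this is (\ref{MetricsN11}). In all three cases the remaining content is just the requirement that the matrix be positive definite, i.e.\ that it define a metric.

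For part (2): by Theorem \ref{HitchinThm} a parallel $\text{Spin}(7)$-structure restricts on each principal orbit to a cocalibrated invariant $G_2$-structure, and since $g$ is diagonal its associated $SO(7)$-structure is fixed. It was shown earlier in this section, by a direct computation of $d\ast\omega$, that within the connected component of the space of invariant $G_2$-structures reached by the $U(1)$-action the only cocalibrated representative with a given diagonal associated metric is the one induced by the basis (\ref{CBasisNkl}); hence, up to passing to another connected component of $\mathcal{S}$, a parallel $\Omega$ must equal $\widetilde{\Omega}$. Part (3) then follows from the normal-form computations: for generic $N^{k,l}$ and for $N^{1,0}$ the equation $d\Omega=0$ for the $\text{Spin}(7)$-structure built from (\ref{CBasisNkl}) was shown to be equivalent to the system (\ref{HolRedNkl}), and the holonomy is all of $\text{Spin}(7)$ because $SU(4)$ and its subgroups are excluded by the discreteness of the set of invariant cocalibrated $G_2$-structures together with Lemma \ref{SU4Lemma}, while $G_2$ is excluded by the parallel-vector-field argument (a parallel invariant vector field must be a constant multiple of $\tfrac{\partial}{\partial t}$, forcing $a'=b'=c'=f'=0$ and hence a Riemannian product with the non-flat $N^{k,l}$, a contradiction). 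For $N^{1,1}$, the equation $d\Omega=0$ for the structure built from (\ref{CBasisN11}) is equivalent to (\ref{HolRedN11}); the holonomy is then automatically a subgroup of $\text{Spin}(7)$, and under the extra hypothesis $a_1+a_2\equiv 0$ the resulting system integrates explicitly to the Bazaikin--Malkovich metrics, whose holonomy is $SU(4)$ except in the Calabi limit where it is $Sp(2)$.

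For the orbit structure: if $(M,\Omega)$ is parallel, complete and not a product, the discussion of Section \ref{CohomOneSection} shows it has exactly one singular orbit $SU(3)/K$ with $U(1)_{k,l}\subsetneq K\subseteq SU(3)$, and Lemma \ref{NklSing} enumerates the possible $K$. Whether $M$ is a manifold or an orbifold is governed by the collapsing sphere $K/U(1)_{k,l}$: $M$ is a genuine orbifold precisely when $K/U(1)_{k,l}$ is a proper quotient of a sphere, and reading the table this occurs only for $K\cong U(2)$ with $SU(3)/K\cong\mathbb{CP}^2$ and $|k+l|\neq 1$, where $K/U(1)_{k,l}\cong S^3/\mathbb{Z}_{|k+l|}$, so that the tubular neighbourhood of the singular orbit is a $D^4/\mathbb{Z}_{|k+l|}$-bundle over $\mathbb{CP}^2$; in all other cases the collapsing sphere is an honest sphere and $M$ is a manifold.

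The main obstacle is not analytic --- everything rests on computations already carried out in the section --- but organizational: one must keep honest the ``up to other connected components of $\mathcal{S}$'' caveat, since cocalibration and parallelism have only been checked on the component reached by our explicit basis, and one must handle the exceptional Aloff-Wallach spaces $N^{1,0}$ and $N^{1,1}$ with care, because the extra intertwiners between the equivalent isotropy submodules are precisely what makes their metric normal forms, and their holonomy analysis, different from the generic case.
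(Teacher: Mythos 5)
Your proposal is correct and follows the same route as the paper: Theorem \ref{NklThm1} is stated as a summary of the section, and its justification is exactly the sequence of arguments you recount — Schur's lemma applied to the isotypic decomposition of $\mathfrak{m}$ for part (1), cocalibration plus Theorem \ref{HitchinThm} for part (2), the explicit computation of $d\Omega=0$ together with Lemma \ref{SU4Lemma} and the parallel-vector-field argument for part (3), and Lemma \ref{NklSing} with the collapsing-sphere analysis for the orbit structure. Your remarks about the ``up to other connected components'' caveat and the extra intertwiners in the exceptional cases accurately reflect the cautions the paper itself raises.
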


In the following three sections, we search for cohomogeneity-one metrics with special holonomy on the spaces of
the above theorem. We have to treat each possible combination of a principal and a singular orbit as a separate case,
since the equations for the holonomy reduction or their initial conditions will change.

\section{$SU(3)/U(1)^2$ as singular orbit}

\subsection{The generic Aloff-Wallach spaces as principal orbit}

Throughout this and the following sections we will assume that the
singular orbit is at $t=0$. In the situation of this subsection,
the Lie algebra of the isotropy group at the singular orbit is
given by $\mathfrak{u}(1)_{k,l} \oplus V_4$. We therefore have
$f(0)=0$ and $a(0),b(0),c(0)\neq 0$ as initial conditions. We take
a look at the equation

\begin{equation}
f' = -\frac{-k-l}{2\Delta}\frac{f^2}{a^2}
-\frac{l}{2\Delta}\frac{f^2}{b^2}-\frac{k}{2\Delta}\frac{f^2}{c^2}
\end{equation}

from the system (\ref{HolRedNkl}). We see that $f'(0)=0$ and by a complete induction we can prove that all higher derivatives
of $f$ at $t=0$ vanish, too. Since the metric has to be analytic, it follows that $f$ vanishes identically. If we insert
$f\equiv 0$ into (\ref{HolRedNkl}), we obtain the equations for a cohomogeneity-one metric with principal orbit $SU(3)/U(1)^2$
and holonomy $G_2$. Since those equations were investigated by Cleyton and Swann \cite{Cley}, we will not discuss this
issue further.

Our next aim is to classify all cohomogeneity-one Einstein metrics
with our fixed orbit structure. In order to apply Theorem
\ref{EinsteinCohom1Solution}, we have to decompose certain
$2\mathfrak{u}(1)$-modules. The fact that we need these
decompositions in the later sections anyway is a further
motivation for our task. As in Section \ref{CohomOneSection}, we
denote the tangent space of the singular orbit by $\mathfrak{p}$
and its normal space by $\mathfrak{p}^{\perp}$. We need to
describe the spaces $W_m^h$ and $W_2^v$ of all
$2\mathfrak{u}(1)$-equivariant maps from
$S^m(\mathfrak{p}^{\perp})$ and $S^2(\mathfrak{p}^{\perp})$ into
$S^2(\mathfrak{p})$ and $S^2(\mathfrak{p})^{\perp}$. Therefore, we
decompose $S^2(\mathfrak{p})$ into irreducible
$2\mathfrak{u}(1)$-submodules. $\mathfrak{p}$ is the direct sum of
the modules $V_1$, $V_2$, and $V_3$, whose weights are given by

\begin{equation}
V_1 = \mathbbm{V}_{3k+3l,k-l}\:, \quad V_2 = \mathbbm{V}_{3l,2k+l}\:, \quad V_3 =
\mathbbm{V}_{-3k,k+2l}\:.
\end{equation}

In the above formula $\mathbbm{V}_{r,s}$ denotes the two-dimensional real $2\mathfrak{u}(1)$-module on which $e_7$ acts with
weight $r$ and $e_8$ acts with weight $s$. With the help of the relations

\begin{equation}
\begin{array}{rcl}
S^2(\mathbbm{V}_{r,s}) & = & \mathbbm{V}_{2r,2s} \oplus \mathbb{R}
\\ \mathbbm{V}_{r_1,s_1}\otimes \mathbbm{V}_{r_2,s_2} & = &
\mathbbm{V}_{r_1+r_2, s_1+s_2} \oplus
\mathbbm{V}_{r_1-r_2,s_1-s_2} \\
\mathbbm{V}_{r,s} & = & \overline{\mathbbm{V}_{-r,-s}} \cong \mathbbm{V}_{-r,-s}
\end{array}
\end{equation}

we see that

\begin{equation}
\begin{aligned}
S^2(\mathfrak{p}) & = \mathbbm{V}_{6k+6l,2k-2l} \oplus
\mathbbm{V}_{6l,4k+2l} \oplus \mathbbm{V}_{-6k,2k+4l} \\
& \quad \oplus \mathbbm{V}_{3k+6l,3k} \oplus
\mathbbm{V}_{3k,-k-2l} \oplus \mathbbm{V}_{3l,2k+l} \oplus
\mathbbm{V}_{6k+3l,-3l} \\ & \quad \oplus
\mathbbm{V}_{-3k+3l,3k+3l} \oplus \mathbbm{V}_{3k+3l,k-l}
\oplus 3\mathbb{R}\:.\\
\end{aligned}
\end{equation}

Next, we have to describe the action of $2\mathfrak{u}(1)$ on
$\mathfrak{p}^{\perp}$. The tangent vector
$\tfrac{\partial}{\partial t}\in\mathfrak{p}^{\perp}$ is fixed by
the action of $U(1)_{k,l}$. $e_8$ therefore acts trivially on
$\mathfrak{p}^{\perp}$. $e_7\in 2\mathfrak{u}(1)$ generates the
group $U(1)_{2l+k,-2k-l} \subseteq SU(3)$, which we will shortly
denote by $U(1)'$. The circle $U(1)^2/U(1)_{k,l}$, where $U(1)^2$
as usual denotes the subgroup of all diagonal matrices in $SU(3)$,
can be considered as a subset of $\mathfrak{p}^{\perp}$. Moreover,
the orbit of the $U(1)'$-action on a point $p\in
\mathfrak{p}^{\perp}\setminus\{0\}$ can be identified with a loop
in the space $U(1)^2/U(1)_{k,l}$. The weight of the $U(1)'$-action
on $\mathfrak{p}^{\perp}$ coincides with the homotopy class of
that loop, which is an integer. That number is the same as the
number of all $t\in [0,2\pi)$ such that $\exp{(t\cdot e_7)} \in
U(1)_{k,l}$. After a short calculation we can conclude that

\begin{equation}
\mathfrak{p}^\perp = \mathbbm{V}_{2(k^2+lk+l^2),0} \:.
\end{equation}

We are now able to decompose $S^m(\mathfrak{p}^{\perp})$ into irreducible $2\mathfrak{u}(1)$-submodules and obtain

\begin{equation}
S^m(\mathfrak{p}^\perp) = \left\{
\begin{array}{lll}
\mathbbm{V}_{2m(k^2+lk+l^2),0} & \oplus
\mathbbm{V}_{2(m-2)(k^2+lk+l^2),0} & \\
& \oplus \ldots\oplus
\mathbbm{V}_{4(k^2+lk+l^2),0} \oplus \mathbb{R} & \text{if $m$ is
even} \\ \mathbbm{V}_{2m(k^2+lk+l^2),0} & \oplus
\mathbbm{V}_{2(m-2)(k^2+lk+l^2),0} & \\
& \oplus \ldots \oplus
\mathbbm{V}_{2(k^2+lk+l^2),0} & \text{if $m$ is odd} \\
\end{array}
\right.
\end{equation}

With the help of the decompositions of $S^2(\mathfrak{p})$ and
$S^m(\mathfrak{p}^{\perp})$ as well as Schur's lemma it follows
that

\begin{equation}
\dim{W_m^h}=\left\{
\begin{array}{ll}
3 & \text{if $m$ is even} \\ 0 & \text{if $m$ is odd} \\
\end{array}
\right.\quad\text{and}\quad\dim{W_2^v}=3\:.
\end{equation}

Since $\mathfrak{p}^{\perp}$ is trivial with respect to
$U(1)_{k,l}$ and $N^{k,l}$ is not exceptional, $\mathfrak{p}$ and
$\mathfrak{p}^{\perp}$ have no non-trivial $U(1)_{k,l}$-submodule
in common and Assumption \ref{WEschAssumpt} is satisfied. We can
therefore apply Theorem \ref{EinsteinCohom1Solution} and Remark
\ref{WEschThmRem} to our situation and thus have proven the
following theorem.

\begin{Th}
\label{TheoremNklS1} In the situation of Theorem \ref{NklThm1}, let $N^{k,l}$ be generic and let $SU(3)/U(1)^2$ be a singular
orbit at $t=0$.

\begin{enumerate}
    \item There exists no $SU(3)$-invariant metric on $M$ which has holonomy Spin($7$).
    \item For any choice of $a_0,b_0,c_0\in\mathbb{R}\setminus\{0\}$ and
    $f_3,\lambda\in\mathbb{R}$, there exists a unique $SU(3)$-invariant
    Einstein metric on a tubular parameterize of $SU(3)/U(1)^2$ such that
    \begin{enumerate}
        \item $a(0)^2=a_0^2$, $b(0)^2=b_0^2$, $c(0)^2=c_0^2$,
        \item $f'''(0)=f_3$, and
        \item the Einstein constant is $\lambda$.
    \end{enumerate}
\end{enumerate}
\end{Th}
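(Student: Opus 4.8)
The plan is to deduce both parts of Theorem \ref{TheoremNklS1} from the structural results that precede it, namely the analysis of the system (\ref{HolRedNkl}) at $t=0$ together with Theorem \ref{EinsteinCohom1Solution} and Remark \ref{WEschThmRem}. I would organize the proof around the two assertions.

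For part (1), I would argue exactly as in the paragraph preceding the representation-theoretic computation: the initial condition at the singular orbit $SU(3)/U(1)^2$ forces $f(0)=0$ (since the Lie algebra of the isotropy group is $\mathfrak{u}(1)_{k,l}\oplus V_4$ and $f$ is the length of the collapsing circle $V_4$). I would then invoke the equation $f'=-\tfrac{-k-l}{2\Delta}\tfrac{f^2}{a^2}-\tfrac{l}{2\Delta}\tfrac{f^2}{b^2}-\tfrac{k}{2\Delta}\tfrac{f^2}{c^2}$ from (\ref{HolRedNkl}) and show by induction on $m$ that $f^{(m)}(0)=0$ for all $m$: each derivative of the right-hand side is a sum of terms each carrying at least one factor of $f$ (by the Leibniz rule applied to $f^2$ times a smooth function of $a,b,c$ near $t=0$), so all of them vanish at $t=0$. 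Since the metric, being Einstein and hence analytic by DeTurck--Kazdan \cite{DeKa}, is determined by its Taylor coefficients, $f\equiv 0$. But a Spin($7$)-holonomy metric with principal orbit $N^{k,l}$ must have $f\not\equiv 0$ (otherwise, as noted, one gets the $G_2$-holonomy equations of Cleyton--Swann \cite{Cley} on $SU(3)/U(1)^2\times I$, whose holonomy is contained in $G_2$, not Spin($7$)); hence no such metric exists.

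For part (2), I would verify the hypotheses of Theorem \ref{EinsteinCohom1Solution} and then translate its conclusion into the stated parameter count. Assumption \ref{WEschAssumpt} holds because $\mathfrak{p}^\perp=\mathbbm{V}_{2\Delta,0}$ is $U(1)_{k,l}$-trivial while $\mathfrak{p}=V_1\oplus V_2\oplus V_3$ has only nontrivial $U(1)_{k,l}$-weights (here one uses that $N^{k,l}$ is generic, so none of $k-l$, $2k+l$, $k+2l$ vanishes). Theorem \ref{EinsteinCohom1Solution} then gives, for any $G$-invariant metric $g_0$ on $SU(3)/U(1)^2$, any $K$-equivariant $g_0':\mathfrak{p}^\perp\to S^2(\mathfrak{p})$, and any $\lambda$, an Einstein metric on a tubular neighborhood, with the remaining freedom measured by $W_m^h/W_{m-2}^h$ (horizontal, $m\geq 2$) and $W_2^v/W_0^v$ (vertical). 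I would now feed in the dimension counts already computed above: $\dim W_0^v=1$ (Remark \ref{WEschThmRem}(1)), $\dim W_2^v=3$, and $\dim W_m^h=3$ for even $m$, $=0$ for odd $m$. Since the diagonal-metric restriction kills the off-diagonal directions and we are imposing $g(\tfrac{\partial}{\partial t},v)=0$, I would trace through Remark \ref{SmoothnessRemark} and Remark \ref{WEschThmRem}(8): $g_0$ contributes the three parameters $a_0^2,b_0^2,c_0^2$; the first-order normal data $g_0'$ lives in $W_1^h$ which has dimension $0$, so there is no first-order freedom; for $m\geq 2$ even, $W_m^h/W_{m-2}^h$ has dimension $3-3=0$, so no higher horizontal freedom; the vertical direction contributes $W_2^v/W_0^v$, dimension $3-1=2$, but two of these correspond to the forbidden mixed terms $g(\tfrac{\partial}{\partial t},v)$ and must be discarded — leaving the single free parameter, which by l'Hôpital (Remark \ref{SmoothnessRemark}(3), \ref{WEschThmRem}(7)) is a third derivative, namely $f'''(0)$. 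Combined with the free Einstein constant $\lambda$, this yields exactly the five-parameter family $(a_0,b_0,c_0,f_3,\lambda)$ claimed, and uniqueness is the uniqueness clause of Theorem \ref{EinsteinCohom1Solution}.

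The main obstacle I anticipate is the bookkeeping in the last step: matching the abstract freedom $W_2^v/W_0^v$ against the concrete statement ``$f'''(0)=f_3$''. One has to be careful that (i) of the three dimensions of $S^2(\mathfrak{p}^\perp)$-valued vertical maps, $\mathfrak{p}^\perp$ being one-dimensional means $S^2(\mathfrak{p}^\perp)$ is one-dimensional, so actually $\dim W_2^v$ should be re-examined — in fact $\mathfrak{p}^\perp=\mathbbm{V}_{2\Delta,0}$ is two-real-dimensional (the $U(1)'$-orbit is a circle), so $S^2(\mathfrak{p}^\perp)=\mathbbm{V}_{4\Delta,0}\oplus\mathbb{R}$ and the relevant counts go through as stated; (ii) one genuinely only retains the component that deforms $g_t|_{\mathfrak{p}^\perp}$, i.e. the function $f$, after discarding the two components that would make the metric non-diagonal or introduce a $dt$-cross term. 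I would make this explicit by writing out which irreducible summand of $S^2(\mathfrak{p}^\perp)$ each parameter sits in and citing Remark \ref{WEschThmRem}(7)--(8) for why only $f'''(0)$ survives. Everything else is a direct appeal to the cited theorems.
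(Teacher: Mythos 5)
Your proposal takes essentially the same route as the paper: part (1) by the induction showing $f^{(m)}(0)=0$ for all $m$ (hence $f\equiv 0$ by analyticity, reducing to the Cleyton--Swann $G_2$-system on $SU(3)/U(1)^2$), and part (2) by verifying Assumption \ref{WEschAssumpt}, applying Theorem \ref{EinsteinCohom1Solution}, and translating the module-theoretic dimension counts $\dim W_m^h$ and $\dim W_2^v$ into the concrete parameters of the statement. Your identification of $\mathfrak{p}^\perp=\mathbbm{V}_{2\Delta,0}$ as two-real-dimensional (so $S^2(\mathfrak{p}^\perp)\cong\mathbbm{V}_{4\Delta,0}\oplus\mathbb{R}$ and $\dim W_2^v=3$) is correct, as is the check that $\mathfrak{p}=V_1\oplus V_2\oplus V_3$ has only nonzero $U(1)_{k,l}$-weights $k-l,2k+l,k+2l$.

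The one place where the bookkeeping is off is your treatment of the vertical quotient. You write that $W_2^v/W_0^v$ has dimension $3-1=2$ and that ``two of these correspond to the forbidden mixed terms $g(\tfrac{\partial}{\partial t},v)$ and must be discarded --- leaving the single free parameter.'' That arithmetic does not close ($2-2\neq 1$), and in fact only \emph{one} of the two dimensions of $W_2^v/W_0^v$ lies in the direction of the mixed component $g(\tfrac{\partial}{\partial t},e_7)$, which is what Remark \ref{WEschThmRem}(7) tells you to discard because the polar-coordinate normalization forces $g(\tfrac{\partial}{\partial t},v)=0$. The remaining dimension, after the l'H\^{o}pital degree shift of Remark \ref{SmoothnessRemark}(3), is precisely the third derivative of the single surviving vertical coefficient $f$. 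Concretely, $S^2(\mathfrak{p}^\perp)=\mathbbm{V}_{4\Delta,0}\oplus\mathbb{R}$ is spanned (in the basis $\tfrac{\partial}{\partial t},e_7$) by $(dt)^2+(e^7)^2$, $(dt)^2-(e^7)^2$, and $dt\cdot e^7$; the trivial summand is absorbed into $W_0^v$, the $dt\cdot e^7$ direction is the forbidden mixed term, and the $(dt)^2-(e^7)^2$ direction, once $g(\tfrac{\partial}{\partial t},\tfrac{\partial}{\partial t})=1$ is imposed, is the freedom in $g_{77}=f^2$, i.e.\ in $f'''(0)$. So the correct count is $2-1=1$. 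This does not change your conclusion, but the explanation as written contradicts itself.
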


\subsection{$N^{1,0}$ as principal orbit}

In this situation, the only trivial $U(1)_{1,0}$-submodule of
$\mathfrak{m}$ again is $V_4$ and we have $f(0)=0$ as initial
condition. Since we restrict ourselves to diagonal metrics, we can
work as before with the system $(\ref{HolRedNkl})$. Therefore, we
can prove by the same arguments as in the previous case that $f$
vanishes and the metric is degenerate.

We apply the methods of \cite{Esch} in order to describe the
cohomogeneity-one Einstein metrics with the orbit structure of
this subsection. Since none of the $U(1)_{1,0}$-modules $V_1$,
$V_2$, and $V_3$ is trivial, Assumption \ref{WEschAssumpt} is
satisfied and we are in the situation of Theorem
\ref{EinsteinCohom1Solution}. If $k=1$ and $l=0$, the
decompositions of $S^2(\mathfrak{p})$ and
$S^m(\mathfrak{p}^\perp)$ which we have found specialize to

\begin{equation}
\begin{aligned}
S^2(\mathfrak{p}) & = \mathbbm{V}_{6,2} \oplus \mathbbm{V}_{0,4}
\oplus \mathbbm{V}_{-6,2} \\ & \quad \oplus \mathbbm{V}_{3,3}
\oplus \mathbbm{V}_{3,-1} \oplus \mathbbm{V}_{0,2} \oplus
\mathbbm{V}_{6,0} \\ & \quad \oplus \mathbbm{V}_{-3,3} \oplus
\mathbbm{V}_{3,1} \oplus 3\mathbb{R} \\
\end{aligned}
\end{equation}

and

\begin{equation}
S^m(\mathfrak{p}^\perp) = \left\{
\begin{array}{ll}
\mathbbm{V}_{2m,0} \oplus \mathbbm{V}_{2(m-2),0} \oplus \ldots
\oplus \mathbbm{V}_{4,0} \oplus \mathbb{R} & \text{if $m$ is even}
\\ \mathbbm{V}_{2m,0} \oplus \mathbbm{V}_{2(m-2),0} \oplus \ldots
\oplus \mathbbm{V}_{2,0} & \text{if $m$ is odd} \\
\end{array}
\right.
\end{equation}

$S^2(\mathfrak{p})$ contains a summand which is isomorphic to
$\mathbbm{V}_{6,0}$. Therefore, we have $\dim{W_m^h}=2$ if $m$ is
odd and $\geq 3$. We calculate $\dim{W_m^h}$ for all
$m\in\mathbb{N}_0$ and obtain

\begin{equation}
\dim{W_m^h} = \left\{\begin{array}{ll} 3 & \text{if $m$ is even}
\\ 0 & \text{if $m=1$} \\ 2 & \text{if $m\geq 3$ is odd} \\
\end{array}
\right.
\end{equation}

There are two initial conditions of $3^{rd}$ order which we can choose
freely. Since the summand $\mathbbm{V}_{6,0}$ is contained in
$V_1\otimes V_3$, the two parameters of $3^{rd}$ order describe how
the metric on the singular orbit, which is diagonal, changes into a
non-diagonal one. Analogously to the previous subsection we have proven
the following theorem.

\begin{Th}
\label{TheoremN10S1} In the situation of Theorem \ref{NklThm1},
let $N^{1,0}$ be the principal orbit and let $SU(3)/U(1)^2$ be a
singular orbit at $t=0$.

\begin{enumerate}
    \item There exists no $SU(3)$-invariant metric on $M$ which is diagonal
    with respect to the basis from page \pageref{qbasisAW} and has
    holonomy Spin($7$).
    \item For any choice of $a_0,b_0,c_0\in\mathbb{R} \setminus\{0\}$
    and $\beta,\widetilde{\beta},f_3,\lambda
    \in\mathbb{R}$, there exists a unique $SU(3)$-invariant
    Einstein metric on a tubular neighborhood of $SU(3)/U(1)^2$
    such that

    \begin{enumerate}
        \item $a(0)^2=a_0^2$, $b(0)^2=b_0^2$, $c(0)^2=c_0^2$,
        \item $\beta_{1,5}'''(0)=\beta$,
        $\beta_{1,6}'''(0)=\widetilde{\beta}$,
        \item $f'''(0)=f_3$, and
        \item the Einstein constant is $\lambda$.
    \end{enumerate}
\end{enumerate}
\end{Th}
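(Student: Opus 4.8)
The plan is to mirror the structure of the proof of Theorem~\ref{TheoremNklS1} from the previous subsection, since the singular orbit $SU(3)/U(1)^2$ and hence the isotropy algebra $\mathfrak{k}=\mathfrak{u}(1)_{1,0}\oplus V_4$ are formally the same; only the principal orbit has changed to the exceptional space $N^{1,0}$. First I would establish part~(1): the initial condition at the singular orbit forces $f(0)=0$ because $V_4$ is the unique trivial $U(1)_{1,0}$-submodule of $\mathfrak{m}$, and then the fourth equation of the holonomy system (\ref{HolRedNkl}), namely $f'=-\tfrac{-k-l}{2\Delta}\tfrac{f^2}{a^2}-\tfrac{l}{2\Delta}\tfrac{f^2}{b^2}-\tfrac{k}{2\Delta}\tfrac{f^2}{c^2}$, shows by induction on the order of differentiation at $t=0$ that every Taylor coefficient of $f$ vanishes. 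Since an Einstein (in particular Ricci-flat) metric is analytic by DeTurck--Kazdan \cite{DeKa}, $f\equiv 0$, so the would-be $\mathrm{Spin}(7)$-metric degenerates into a $G_2$-metric on an $SU(3)/U(1)^2$-bundle; hence no $SU(3)$-invariant diagonal metric on $M$ has full holonomy $\mathrm{Spin}(7)$. This is exactly the argument used in the generic and $N^{1,0}$-to-$SU(3)/U(1)^2$ cases above, and it is essentially routine.

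For part~(2) the strategy is to verify the hypotheses of Theorem~\ref{EinsteinCohom1Solution} and then read off the free-parameter count from Remark~\ref{WEschThmRem}. One checks Assumption~\ref{WEschAssumpt}: since $\mathfrak{p}^\perp$ is the trivial $U(1)_{1,0}$-module $\mathbbm{V}_{2,0}$ (trivial under $U(1)_{1,0}$ but carrying weight $2$ under the larger group $U(1)'$ generated by $e_7$) and none of $V_1,V_2,V_3$ is $U(1)_{1,0}$-trivial, $\mathfrak{p}$ and $\mathfrak{p}^\perp$ share no positive-dimensional $H$-submodule. Theorem~\ref{EinsteinCohom1Solution} then gives, for any prescribed $g_0$ on $SU(3)/U(1)^2$, any $K$-equivariant $g'_0:\mathfrak{p}^\perp\to S^2(\mathfrak{p})$, and any $\lambda$, a unique (up to higher-order data) $SU(3)$-invariant Einstein metric on a tubular neighborhood; the restriction to diagonal metrics is legitimate because, by the same remark, the Einstein equations preserve diagonality and $S^2(\mathfrak{p})\oplus S^2(\mathfrak{p}^\perp)$ is invariant under the Picard iteration, so the power series converges.

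The technical heart is the parameter bookkeeping, and this is where I expect the only real work. Using the already-computed decomposition of $S^2(\mathfrak{p})$ for $(k,l)=(1,0)$ — which contains the summand $\mathbbm{V}_{6,0}$ — together with the decomposition of $S^m(\mathfrak{p}^\perp)$ into $\mathbbm{V}_{2m,0}\oplus\mathbbm{V}_{2(m-2),0}\oplus\cdots$, Schur's lemma yields $\dim W_m^h=3$ for $m$ even, $\dim W_1^h=0$, and $\dim W_m^h=2$ for odd $m\ge 3$. The chains $W_m^h/W_{m-2}^h$ therefore contribute one free parameter in each even degree $m\ge 2$ and two free parameters in degree $m=3$ (and none thereafter, since the odd chain stabilizes at dimension $2$ immediately after $m=3$); the even-degree parameters are of the standard type already accounted for by $g_0$ and $g'_0$ in the statement of Theorem~\ref{EinsteinCohom1Solution} (plus $\lambda$), while the vertical freedom $W_2^v/W_0^v$ has dimension $3-1=2$, of which — by Remark~\ref{SmoothnessRemark} and Remark~\ref{WEschThmRem}(7) — the piece describing $g(\tfrac{\partial}{\partial t},v)$ is discarded by the coordinate normalization, leaving a parameter that manifests as a third $t$-derivative, i.e.\ $f'''(0)$. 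The two degree-$3$ horizontal parameters live in the copy of $\mathbbm{V}_{6,0}\subseteq V_1\otimes V_3$, so they encode precisely how the diagonal metric on the singular orbit is perturbed into a non-diagonal one as $t$ grows; these are recorded as $\beta_{1,5}'''(0)=\beta$ and $\beta_{1,6}'''(0)=\widetilde\beta$. Assembling $a_0,b_0,c_0$ (from $g_0$, using that the diagonal metric on $SU(3)/U(1)^2$ has three parameters), $\beta,\widetilde\beta$ (the degree-$3$ horizontal freedom), $f_3$ (the vertical degree-$2$ freedom), and $\lambda$ gives exactly the list in the theorem, and the uniqueness clause of Theorem~\ref{EinsteinCohom1Solution} closes the argument. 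The one subtlety to be careful about is confirming that $g'_0$ contributes no independent new parameter beyond what is already listed — i.e.\ that $W_1^h=0$ forces $g'_0=0$ — which is immediate from $\dim W_1^h=0$.
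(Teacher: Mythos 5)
Your proposal follows the paper's proof essentially line by line: part~(1) is the same $f(0)=0$, induction on the fourth equation of (\ref{HolRedNkl}), and analyticity argument used for the generic case, and part~(2) verifies Assumption~\ref{WEschAssumpt} and reads off the parameter count from the same $2\mathfrak{u}(1)$-module decompositions of $S^2(\mathfrak{p})$ and $S^m(\mathfrak{p}^\perp)$ (specialized to $k=1$, $l=0$), correctly locating the two extra third-order parameters in the $\mathbbm{V}_{6,0}\subseteq V_1\otimes V_3$ summand and $f_3$ in the surviving part of $W_2^v/W_0^v$. One bookkeeping slip: since $\dim W_m^h=3$ for \emph{every} even $m\geq 0$, the quotients $W_m^h/W_{m-2}^h$ vanish for all even $m\geq 2$, so there is in fact no ``one free parameter in each even degree $m\geq 2$'' as you state --- the even chain is already stable at $m=0$; your final parameter list $(a_0,b_0,c_0,\beta,\widetilde\beta,f_3,\lambda)$ is nevertheless correct and matches the paper.
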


\subsection{$N^{1,1}$ as principal orbit}
\label{N11S1Sec}

We make the same assumptions as in Theorem \ref{NklThm1} and thus
can work with the system (\ref{HolRedN11}). The isotropy algebra
$\mathfrak{k}$ of the $SU(3)$-action on the singular orbit is spanned
by $e_8$ and an arbitrary $x\in V_1\oplus V_4\setminus \{0\}$.
We assume that $x=e_7$ or equivalently that $f(0)=0$. Since the length
of the collapsing circle shall be $2\pi t + O(t^2)$ for small $t$, we need
$f'(0)\neq 0$. The differential equation for $f'$ contains the additional term
$-3\tfrac{(a_1 - a_2)^2}{a_1a_2}$, which is not there if the principal orbit
is generic. We therefore have $f'(0)\neq 0$ if and only if
$a_1(0)=a_0=-a_2(0)$ for an $a_0\in\mathbb{R}\setminus\{0\}$. The
values $b_0$ of $b(0)$ and $c_0$ of $c(0)$ can be chosen arbitrarily.
We thus have formulated an initial value problem for the first order
system (\ref{HolRedN11}).

We take a short look at the case where $x = \alpha e_1 + \beta e_2
+ \gamma e_7$ for some coefficients $\alpha$, $\beta$,
$\gamma\in\mathbb{R}$. Since we want $\lim_{t\rightarrow 0}
g_t(x,x)=0$, it follows that $x\in\{e_1, e_2, e_7\}$. There exists
a $\tau\in \text{Norm}_{SU(3)} U(1)_{1,1}$ which maps $e_1$ or
$e_2$ to $e_7$. Metrics which are described by solutions of
(\ref{HolRedN11}) with $a_1(0)=0$ or $a_2(0)=0$ can be therefore
be mapped by $\tau$ to metrics with $f(0)=0$. Diagonal metrics
with $a_1(0)=0$ or $a_2(0)=0$ can be mapped to non-diagonal
metrics, which we will not consider in the article. Nevertheless,
the above observation is a motivation to restrict ourselves to the
case $f(0)=0$.

We make a power series ansatz for (\ref{HolRedN11}) with the initial
values $a_0$, $b_0$, and $c_0$ and obtain

\begin{equation}
\label{N11S1PowSer}
\begin{array}{rclrrrrrrl}
a_1(t)\!\! & =\!\! & a_0 & - &
\tfrac{1}{2}\tfrac{a_0^2-b_0^2-c_0^2}{b_0c_0} t & + & \tfrac{1}{8}
\tfrac{3a_0^4-2a_0^2b_0^2-2a_0^2c_0^2-b_0^4+14b_0^2c_0^2-c_0^4}{
a_0b_0^2c_0^2}t^2 & +\!\! & \ldots \\ &&&&&&&& \\ a_2(t)\!\! &
=\!\! & -a_0 & - & \tfrac{1}{2}\tfrac{a_0^2-b_0^2-c_0^2}{b_0c_0}t
& - & \tfrac{1}{8}
\tfrac{3a_0^4-2a_0^2b_0^2-2a_0^2c_0^2-b_0^4+14b_0^2c_0^2-c_0^4}{
a_0b_0^2c_0^2} t^2 & +\!\! & \ldots \\ &&&&&&&& \\ b(t)\!\! &
=\!\! & b_0 & + & 0\cdot t & - &
\tfrac{1}{4}\tfrac{a_0^4-6a_0^2c_0^2-b_0^4+c_0^4}{a_0^2b_0c_0^2}t^2
& +\!\! & \ldots \\ &&&&&&&& \\ c(t)\!\! & =\!\! & c_0 & + &
0\cdot t & - &
\tfrac{1}{4}\tfrac{a_0^4-6a_0^2b_0^2+b_0^4-c_0^4}{a_0^2b_0^2c_0}t^2
& +\!\! & \ldots \\ &&&&&&&& \\ f(t)\!\! & =\!\! & 0 & + & 12t & +
& 0\cdot t^2 & +\!\! & \ldots \\
\end{array}
\end{equation}

The next issue which we will discuss is how the smoothness
conditions from Theorem \ref{SmoothExtension} translate into
conditions on the coefficients of the above power series. The
modules
$\mathfrak{p}=\mathbbm{V}_{6,0}\oplus\mathbbm{V}_{3,3}\oplus
\mathbbm{V}_{-3,3}$ and $\mathfrak{p}^\perp=\mathbbm{V}_{6,0}$
have one submodule in common and Assumption \ref{WEschAssumpt} is
thus not satisfied. Since we assume that the metric is diagonal,
it is nevertheless an element of $S^2(\mathfrak{p}) \oplus
S^2(\mathfrak{p}^\perp)$. We can therefore still check the
smoothness conditions by describing $W_m^h$ and $W_m^v$.
$S^2(\mathfrak{p})$ decomposes as

\begin{equation}
\begin{aligned}
S^2(\mathfrak{p}) & = \mathbbm{V}_{12,0} \oplus \mathbbm{V}_{6,6}
\oplus \mathbbm{V}_{-6,6} \\ & \quad\oplus \mathbbm{V}_{9,3}
\oplus \mathbbm{V}_{3,-3} \oplus \mathbbm{V}_{3,3} \oplus
\mathbbm{V}_{9,-3} \\ & \quad\oplus \mathbbm{V}_{0,6} \oplus
\mathbbm{V}_{6,0} \oplus 3\mathbb{R}\\
\end{aligned}
\end{equation}

For $S^m(\mathfrak{p}^\perp)$, we obtain

\begin{equation}
S^m(\mathfrak{p}^\perp) = \left\{
\begin{array}{ll}
\mathbbm{V}_{6m,0} \oplus \mathbbm{V}_{6(m-2),0} \oplus \ldots
\oplus \mathbbm{V}_{12,0} \oplus \mathbb{R} & \text{if $m$ is
even} \\ \mathbbm{V}_{6m,0} \oplus \mathbbm{V}_{6(m-2),0} \oplus
\ldots \oplus \mathbbm{V}_{6,0} & \text{if $m$ is odd} \\
\end{array}
\right.
\end{equation}

The dimensions of $W_m^h$ and $W_m^v$ can be calculated with the help
of Schur's lemma.

\begin{equation}
\dim{W_m^h} = \left\{\begin{array}{ll} 3 & \text{if $m=0$} \\ 5 &
\text{if $m\geq 2$ and even} \\ 2 & \text{if $m$ odd} \\
\end{array}
\right. \quad\quad \dim{W_m^v} = \left\{\begin{array}{ll} 1 &
\text{if $m=0$} \\ 3 & \text{if $m\geq 2$ and even} \\ 0 &
\text{if $m$ odd} \\
\end{array}
\right.
\end{equation}

The three dimensions of $W_0^h$ correspond to the coefficients of
$e^1\otimes e^1 + e^2\otimes e^2$, $e^3\otimes e^3 + e^4\otimes e^4$,
and $e^5\otimes e^5 + e^6\otimes e^6$. The two additional dimensions of
$W_m^h$ where $m$ is even and $\geq 2$ are caused by the submodule
$\mathbbm{V}_{12,0}$ of $S^2(V_1)$. They therefore describe the coefficients
of $e^1\otimes e^1 - e^2\otimes e^2$ and $e^1\otimes e^2 + e^2\otimes e^1$.
Since we consider only diagonal metrics, we can ignore the freedom which
we have for the second coefficient. The two dimensions of $W_m^h$ where
$m$ is odd describe $K$-equivariant maps $S^m(\mathfrak{p}^{\perp})
\rightarrow S^2(\mathfrak{p})$ which are non-zero only on
$V_2\otimes V_3$ and can be ignored for the same reasons as above.

The two additional dimensions of $W_m^v$ where $m$ is even and
$\geq 2$ describe the freedom of $g(\tfrac{\partial}{\partial t},
\tfrac{\partial}{\partial t})$ and $g(\tfrac{\partial}{\partial t},e_7)$ and
can be ignored, too (see Remark \ref{SmoothnessRemark}). As we have
remarked at the same place, the value of $|f'(0)|$ has to be chosen in
such a way that the length of the collapsing circle is $2\pi t + O(t^2)$
for small $t$. Since $\exp{(e_7 t)}$ intersects $U(1)_{1,1}$ at
$t=\tfrac{\pi}{3}$ for the first time, the length of the collapsing circle
is

\begin{equation*}
\int_0^{\tfrac{\pi}{3}} \sqrt{g(e_7,e_7)} ds = \tfrac{\pi}{3}|f(t)|\:.
\end{equation*}

We therefore obtain $|f'(0)|=6$ as a smoothness condition. By
translating our statements on $W_m^h$ and $W_m^v$ into conditions
on power series expansion of the metric we finally see that an
analytic diagonal metric of type (\ref{MetricsN11}) is smooth if
and only if

\begin{itemize}
    \item $a_1^2(0) = a_2^2(0)$,
    \item $a_1^2$, $a_2^2$, $b^2$, and $c^2$ are even functions,
    \item $f(0)=0$, $|f'(0)|=6$, and
    \item $f$ is odd.
\end{itemize}

The power series (\ref{N11S1PowSer}) does not satisfy these
conditions. Nevertheless, there is a method to obtain smooth
cohomogeneity-one metrics whose holonomy is contained in
$\text{Spin}(7)$ from (\ref{N11S1PowSer}). Let

\begin{equation}
\label{hZ2}
h:=\left(\,\begin{array}{ccc} i & 0 & 0 \\ 0 & -i & 0 \\ 0 & 0 & 1
\\
\end{array}\,\right)\in SU(3)\:.
\end{equation}

$h$ acts by $h.gU(1)_{1,1}:=hgh^{-1}U(1)_{1,1}$ on $N^{1,1}$ and
stabilizes the $G_2$-struc\-tures which are determined by a basis
of type (\ref{CBasisN11}). Since $h^2\in U(1)_{1,1}$, $N^{1,1}$ is
a double cover of the quotient of $SU(3)$ by the group which is
ge\-nerated by $U(1)_{1,1}$ and $h$. We denote this quotient
simply by $N^{1,1}/\mathbb{Z}_2$. On $SU(3)/U(1)^2$, $h$ acts
trivially. We divide the cohomogeneity-one manifold with principal
orbit $N^{1,1}$ and singular orbit $SU(3)/U(1)^2$ by the group
which is generated by the simultaneous action of $h$ on all
orbits. Any (smooth or non-smooth) Spin($7$)-structure on the old
manifold which is induced by (\ref{CBasisN11}) is mapped by the
quotient map to a new one.

It is easy to see that any circle which was wrapped $r$ times
around the origin of the old normal space is wrapped $2r$ times
around the origin of the new normal space. We reconsider the
arguments which we have made and see that in this new situation we
have to require $|f'(0)|=12$ instead of $|f'(0)|=6$ in order to
make the metric smooth at the singular orbit. Since $U(1)^2$ now
acts on $\mathfrak{p}^\perp$ as $\mathbbm{V}_{12,0}$ rather than
$\mathbbm{V}_{6,0}$, we have

\begin{equation}
\dim{W_m^h} = \left\{\begin{array}{ll} 3 & \text{if $m$ even} \\ 2
& \text{if $m$ odd} \\
\end{array}
\right.
\end{equation}

Since the values of $\dim{W_m^h}$ have changed, the smoothness
conditions on the functions $a_1$, $a_2$, $b$, and $c$ have
changed, too. The meaning of the three dimensions in the even case
is the same as before and the two dimensions in the odd case now
correspond to the components of the metric in $S^2(V_1)$. The
dimensions of $W_m^v$ stay the same and we have found the
following new smoothness conditions.

\begin{itemize}
    \item $a_1^2(t)=a_2^2(-t)$,
    \item $b^2$ and $c^2$ are even,
    \item $f(0)=0$, $|f'(0)|=12$, and
    \item $f$ is odd.
\end{itemize}

In particular, these conditions are satisfied if
$a_1(t)=-a_2(-t)$, $b(t)=b(-t)$ and $c(t)=c(-t)$. The first
coefficients of (\ref{N11S1PowSer}) obviously satisfy this new set
of conditions.

By an explicit calculation we can prove that any power series
solution of (\ref{HolRedN11}) with the same initial conditions as
in this subsection is uniquely determined by $a_0$, $b_0$, and
$c_0$. The system (\ref{HolRedN11}) is preserved if we replace
$(a_1(t),a_2(t),b(t),c(t),f(t))$ by
$(-a_2(-t),-a_1(-t),b(-t),c(-t), -f(-t))$.\linebreak Therefore,
both sets of functions are the unique solutions of the same
initial value problem and the power series thus satisfies the
smoothness conditions. $h$ acts trivially on
$\mathfrak{p}^{\perp}$ and non-trivially on $V_1$, $V_2$, and
$V_3$. For this reason, Assumption \ref{WEschAssumpt} is satisfied
and we can show with the help of Theorem
\ref{EinsteinCohom1Solution} that the series converges near the
singular orbit. Moreover, we can use the dimensions of $W_m^h$ and
$W_m^v$ to calculate the number of cohomogeneity-one Einstein
metrics near the singular orbit.

We take another look at the power series (\ref{N11S1PowSer}). The
condition $a_1(t) + a_2(t) = 0$ can only be satisfied if
$a_0^2=b_0^2 + c_0^2$. In that situation, we obtain the metrics of
Bazaikin and Malkovich \cite{Baz2}, which have holonomy $SU(4)$.
We finally have proven the following theorem.

\begin{Th}
Let $M$ be a cohomogeneity-one manifold whose principal orbit is $N^{1,1}/
\mathbb{Z}_2$ where $\mathbb{Z}_2$ is generated by (\ref{hZ2}). We assume
that $M$ has exactly one singular orbit of type $SU(3)/U(1)^2$, which is at
$t=0$.

\begin{enumerate}
    \item For any $a_0,b_0,c_0\in\mathbb{R}\setminus\{0\}$ there exists a
    unique $SU(3)$-invariant parallel Spin($7$)-structure $\Omega$ on a
    tubular simultaneous of $SU(3)/U(1)^2$ which is determined by a
    basis of type (\ref{CBasisN11}) and satisfies $f(0)=0$,
    $a_1(0)=-a_2(0)=a_0$, $b(0)=b_0$, and $c(0)=c_0$.

    \item The metric associated to $\Omega$ is a diagonal metric of type
    (\ref{MetricsN11}). If $a_0^2 = b_0^2 + c_0^2$, its holonomy is
    $SU(4)$.

    \item For any choice of $a_0,b_0,c_0\in\mathbb{R}\setminus\{0\}$
    and $a_1,\beta,f_3,\lambda\in\mathbb{R}$, there
    exists a unique $SU(3)$-invariant Einstein metric on a tubular
    neighborhood of $SU(3)/U(1)^2$ such that

    \begin{enumerate}
        \item $f(0)=0$, $a_1(0)^2=a_2(0)^2=a_0^2$, $b(0)^2=b_0^2$, $c(0)^2=c_0^2$,
        \item $(a_1-a_2)'(0)=a_1$, $\beta_{1,2}'(0)=\beta$,
        \item $f'''(0)=f_3$, and
        \item the Einstein constant is $\lambda$.
    \end{enumerate}
\end{enumerate}
\end{Th}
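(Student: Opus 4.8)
The plan is to read parts (1) and (2) off the analysis of the initial value problem for (\ref{HolRedN11}) carried out in this subsection, and to obtain part (3) by inserting the decompositions of $S^2(\mathfrak{p})$ and $S^m(\mathfrak{p}^\perp)$ computed above into Theorem \ref{EinsteinCohom1Solution}. For (1), I would first recall that by part (3)(b) of Theorem \ref{NklThm1} a four-form built from a basis of type (\ref{CBasisN11}) is parallel if and only if $(a_1,a_2,b,c,f)$ solves (\ref{HolRedN11}); for the singular orbit to be $SU(3)/U(1)^2$ one needs $f(0)=0$, and one needs $a_1(0)=-a_2(0)=a_0\neq 0$ so that $f'(0)\neq 0$ (as the length of the collapsing circle has to be $2\pi t+O(t^2)$), while $b(0)=b_0$ and $c(0)=c_0$ stay free. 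As established above, the formal power series (\ref{N11S1PowSer}) is the unique formal solution of (\ref{HolRedN11}) with these initial data. Next I would invoke the symmetry noted above: $(a_1(t),a_2(t),b(t),c(t),f(t))\mapsto(-a_2(-t),-a_1(-t),b(-t),c(-t),-f(-t))$ maps solutions of (\ref{HolRedN11}) to solutions with the same initial data, so by uniqueness the solution satisfies $a_1(t)=-a_2(-t)$, $b(-t)=b(t)$, $c(-t)=c(t)$, $f(-t)=-f(t)$, and $|f'(0)|=12$ since $f(t)=12t+O(t^2)$ --- precisely the smoothness conditions for $N^{1,1}/\mathbb{Z}_2$ obtained above. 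Since after passing to $N^{1,1}/\mathbb{Z}_2$ the element $h$ of (\ref{hZ2}) acts trivially on $\mathfrak{p}^\perp$ but non-trivially on $V_1,V_2,V_3$, Assumption \ref{WEschAssumpt} holds, so Remark \ref{WEschThmRem}.\ref{ConvRem} gives convergence of the series on some interval $(0,\varepsilon)$; Theorem \ref{SmoothExtension}, as adapted in Remark \ref{SmoothnessRemark}, then shows that $\Omega$ extends smoothly across the singular orbit, so it is an $SU(3)$-invariant parallel $\text{Spin}(7)$-structure on a tubular neighborhood of $SU(3)/U(1)^2$, unique among those built from (\ref{CBasisN11}) because the solution of (\ref{HolRedN11}) with those initial data is.

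For (2), the associated metric is, by construction from (\ref{CBasisN11}), the diagonal metric of type (\ref{MetricsN11}) with coefficients $a_1,a_2,b,c,f$. If in addition $a_0^2=b_0^2+c_0^2$, then the constraint $a_1(t)+a_2(t)=0$ is compatible with the initial data and is preserved by (\ref{HolRedN11}), so by uniqueness the solution has $a_1\equiv-a_2$ and hence $a_1^2=b^2+c^2$; then (\ref{HolRedN11}) is explicitly solvable and yields the metrics of Bazaikin and Malkovich \cite{Baz2}, whose holonomy, by \cite{Baz2} and part (3)(b) of Theorem \ref{NklThm1}, is $SU(4)$ (with the Calabi \cite{Calabi} hyperk\"ahler metric on $T^{\ast}\mathbb{CP}^2$, of holonomy $Sp(2)$, occurring only in a limiting case).

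For (3), I would apply Theorem \ref{EinsteinCohom1Solution} directly; convergence of the resulting formal power series is guaranteed by Remark \ref{WEschThmRem}.\ref{ConvRem}, Assumption \ref{WEschAssumpt} being available after the $\mathbb{Z}_2$-quotient. The free metric $g_0$ on $SU(3)/U(1)^2$ amounts to the data $a_0,b_0,c_0$ together with $f(0)=0$; the first normal derivative $g_0'\in W_1^h$ is two-dimensional and, by the identification of the summand $\mathbbm{V}_{12,0}\subseteq S^2(V_1)$ made above, corresponds precisely to the parameters $(a_1-a_2)'(0)=a_1$ and $\beta_{1,2}'(0)=\beta$; $\lambda$ is the Einstein constant; and the remaining freedom is governed by $W_m^h/W_{m-2}^h$ and $W_2^v/W_0^v$. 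Using $\dim W_m^h=3$ for even $m$ and $\dim W_m^h=2$ for odd $m$ one gets $W_m^h/W_{m-2}^h=0$ for all $m\geq 2$, so there are no further horizontal parameters; and of the two directions in $W_2^v/W_0^v$ the ones describing $g(\tfrac{\partial}{\partial t},\tfrac{\partial}{\partial t})$ and $g(\tfrac{\partial}{\partial t},e_7)$ are fixed by the choice of coordinates (see Remark \ref{WEschThmRem}), so only the single parameter $f'''(0)=f_3$ remains. Hence Theorem \ref{EinsteinCohom1Solution} produces, for every $(a_0,b_0,c_0,a_1,\beta,f_3,\lambda)$, a unique $SU(3)$-invariant Einstein metric with the stated initial conditions.

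The step I expect to be the main obstacle is the bookkeeping that legitimizes invoking Assumption \ref{WEschAssumpt} after the $\mathbb{Z}_2$-quotient --- verifying that the enlarged principal isotropy $\langle U(1)_{1,1},h\rangle$ genuinely separates $\mathfrak{p}$ from $\mathfrak{p}^\perp$, on which both the convergence in (1) and the parameter count in (3) rest --- together with the careful matching of the abstract data $g_0$, $g_0'$, $W_2^v/W_0^v$ of Theorem \ref{EinsteinCohom1Solution} with the concrete parameters $a_1$, $\beta$, $f_3$; the symmetry argument for the smoothness conditions and the appeal to \cite{Baz2} for the holonomy are then routine.
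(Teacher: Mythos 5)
Your proposal matches the paper's own proof very closely in structure: the same initial conditions $f(0)=0$, $a_1(0)=-a_2(0)=a_0$; the same symmetry $(a_1(t),a_2(t),b(t),c(t),f(t))\mapsto(-a_2(-t),-a_1(-t),b(-t),c(-t),-f(-t))$ to verify the smoothness conditions of the $N^{1,1}/\mathbb{Z}_2$-quotient; the same observation that $h$ acts trivially on $\mathfrak{p}^\perp$ and non-trivially on $V_1,V_2,V_3$ so that Assumption \ref{WEschAssumpt} holds after the quotient; the appeal to Remark \ref{WEschThmRem}.\ref{ConvRem} for convergence; and the same $W^h_m$, $W^v_2$ bookkeeping for part (3).

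There is, however, a genuine gap in your argument for part (2). You assert that the constraint $a_1(t)+a_2(t)=0$ \emph{is preserved by} (\ref{HolRedN11}), and then deduce $a_1^2=b^2+c^2$ as a consequence. This is circular, and the preservation claim would fail if verified directly. On the locus $\{a_1+a_2=0\}$, clearing denominators in the first two equations of (\ref{HolRedN11}) and adding gives
\begin{equation*}
a_1'+a_2'=\frac{2\,(b^2+c^2-a_1^2)}{bc}\,,
\end{equation*}
which does not vanish unless $a_1^2=b^2+c^2$. So the single constraint $a_1+a_2=0$ is not flow-invariant; the locus that \emph{is} flow-invariant is $\{a_1+a_2=0,\ a_1^2=b^2+c^2\}$ (one checks with the third and fourth equations that $\tfrac{d}{dt}(a_1^2-b^2-c^2)=0$ there as well), and this larger locus contains the initial data precisely when $a_0^2=b_0^2+c_0^2$. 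The condition $a_1^2=b^2+c^2$ must therefore be put in \emph{before} you can invoke preservation, not extracted afterwards. Alternatively, and more in the spirit of the paper's terse remark, you can observe that the explicit Bazaikin--Malkovich solutions already satisfy $a_1\equiv-a_2$ with exactly these initial data, and then conclude by the uniqueness of the formal power series solution that you have already established. With that repair, the rest of part (2) and the remaining parts of your argument are sound and coincide with the paper's.
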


\begin{Rem}
A power series ansatz for the metrics with special holonomy from
the above theorem has also been made in Kanno, Yasui \cite{KanI}.
These metrics were also investigated by Bazaikin and Malkovich
\cite{Baz}, \cite{Baz2}. The\linebreak authors study the
smoothness and completeness of the metrics and describe the family
of metrics with holonomy $SU(4)$ explicitly. They also prove that
the holonomy of their metrics is always $SU(4)$ except in the
limiting case where $b_0\rightarrow 0$ and the metric degenerates
into the hyperk\"ahler metric of Calabi \cite{Calabi} on
$T^{\ast}\mathbb{CP}^2$. Our methods for the calculation of the
smoothness conditions and for the proof of the holonomy reduction
in the case $a_1(t) + a_2(t) = 0$ are new contributions of the
author.
\end{Rem}

\section{$S^5$ as singular orbit}
\label{S5SingSection}

$S^5$ is a possible singular orbit only if $k\cdot l\cdot (-k-l)=0$. We
assume in this section that $(k,l)=(1,-1)$, since this implies
the initial conditions $a(0)=0$ and $b(0),c(0),f(0)\neq 0$. The only
difference to the case $(k,l)=(1,0)$ is that the off-diagonal
coefficients of the metric are contained in $V_2\otimes V_3\oplus V_3\otimes
V_2$ instead of $V_1\otimes V_3\oplus V_3\otimes V_1$ and
hence will be denoted by $\beta_{3,5}$ and $\beta_{3,6}$ instead of
$\beta_{1,5}$ and $\beta_{1,6}$. We want the right-hand side of the
second and third equation of (\ref{HolRedNkl}) to converge to a
real number for $t\rightarrow 0$. This is only possible if
$b(0)^2 = c(0)^2$. Since we may replace $c$ by $-c$, $f$ by $-f$, and
$t$ by $-t$ without changing the system (\ref{HolRedNkl}), we can
assume that $b(0)=c(0)$. We shortly denote $b(0)$ by $b_0$
and $f(0)$ by $f_0$. By a power series ansatz, we obtain the following
solution of the system (\ref{HolRedNkl}).

\begin{equation}
\label{N10S2PowSer}
\begin{array}{rclrrrrrrrrl}
a(t)\!\! & =\!\! & 0 & + & 2t & + & 0\cdot t^2 & - &
\tfrac{1}{27}\tfrac{36b_0^2-f_0^2}{b_0^4} t^3 & +\!\! & \ldots \\
&&&&&&&&&& \\ b(t)\!\! & =\!\! & b_0 & - & \tfrac{1}{6}\tfrac{f_0}{b_0}t &
+ & \tfrac{1}{72}\tfrac{72b_0^2-5f_0^2}{b_0^3}t^2 & + &
\tfrac{1}{6480}\tfrac{f_0(504b_0^2-167f_0^2)}{b_0^5} t^3 & +\!\! &
\ldots \\ &&&&&&&&&& \\ c(t)\!\! & =\!\! & b_0 & + &
\tfrac{1}{6}\tfrac{f_0}{b_0}t & + &
\tfrac{1}{72}\tfrac{72b_0^2-5f_0^2}{b_0^3}t^2 & - &
\tfrac{1}{6480}\tfrac{f_0(504b_0^2-167f_0^2)}{b_0^5} t^3 & +\!\! &
\ldots \\ &&&&&&&&&& \\ f(t)\!\! & =\!\! & f_0 & + & 0\cdot t & + &
\tfrac{1}{6}\tfrac{f_0^3}{b_0^4}t^2 & + & 0\cdot t^3 & +\!\! & \ldots
\\
\end{array}
\end{equation}

Our next step is to decompose $S^m(\mathfrak{p}^{\perp})$ and
$S^2(\mathfrak{p})$ into $SU(2)$-submodules. This is necessary
in order to deduce the smoothness conditions. We denote the
complex irreducible representation of $SU(2)$ with weight $r$ by
$\mathbbm{V}_r^{\mathbb{C}}$. The real irreducible representation
with the same weight we denote by $\mathbbm{V}_r^{\mathbb{R}}$.
We recall that $\dim{\mathbbm{V}_r^{\mathbb{R}}}=r+1$ if $r$
is even and $\dim{\mathbbm{V}_r^{\mathbb{R}}}=2(r+1)$ if $r$
is odd.

The orbits of the $SU(2)$-action on the three-dimensional space
$\mathfrak{p}^{\perp}$ are spheres. Therefore, it is isomorphic
to $\mathbbm{V}_2^{\mathbb{R}}$. In order to decompose
$S^m(\mathfrak{p}^{\perp})$, we first consider the space
$S^m_{\mathbb{R}}(\mathbb{C}^2)$ of all homogeneous
polynomials of $m^{th}$ order with real coefficients
on $\mathbb{C}^2$. The subscript $\mathbb{R}$ means
that we consider $\mathbb{C}^2$ as a four-dimensional
real vector space and we have $\dim{S^2_{\mathbb{R}}
(\mathbb{C}^2)} = 10$, $\dim{S^3_{\mathbb{R}}
(\mathbb{C}^2)} = 20$, etc. The complexification
$S^m_{\mathbb{R}}(\mathbb{C}^2) \otimes \mathbb{C}$
consists of all polynomials depending on
$z_1,z_2,\overline{z_1},\overline{z_2}$. Let
$\mathbbm{V}_{s,\overline{m-s}}$ be the subspace of all
trace-free polynomials depending on $s$ complex and
$m-s$ conjugate complex variables. $S^m_{\mathbb{R}}(\mathbb{C}^2)
\otimes\mathbb{C}$ and $\mathbbm{V}_{s,\overline{m-s}}$
are obviously $SU(2)$-modules and we have the
following decomposition

\begin{equation}
S^m_{\mathbb{R}}(\mathbb{C}^2)\otimes \mathbb{C}=
\bigoplus_{p=0}^{\lfloor \tfrac{m}{2}
\rfloor}\:\bigoplus_{s=0}^{m-2p} \mathbbm{V}_{s,
\overline{m-2p-s}}\:.
\end{equation}

The submodules $\mathbbm{V}_{s,\overline{m-2p-s}}$ are
irreducible and by calculating their dimension we see
that

\begin{equation}
S^m_{\mathbb{R}}(\mathbb{C}^2)\otimes \mathbb{C}=
\bigoplus_{p=0}^{\lfloor \tfrac{m}{2} \rfloor} (m- 2p
+1)\mathbbm{V}_{m-2p}^{\mathbb{C}}\:,
\end{equation}

where $(m- 2p +1)\mathbbm{V}_{m-2p}^{\mathbb{C}}$ denotes the
direct sum of $m- 2p +1$ copies of
$\mathbbm{V}_{m-2p}^{\mathbb{C}}$. $S^m_{\mathbb{R}}(
\mathbb{C}^2)$ consists of exactly those elements of
$S^m_{\mathbb{R}}(\mathbb{C}^2) \otimes\mathbb{C}$ which are
invariant with respect to the conjugation map $\tau$. $\tau$ maps
$\mathbbm{V}_{s,\overline{m-2p-s}}$ into
$\mathbbm{V}_{m-2p-s,\overline{s}}$ and vice versa. If $m$ is even
and $s\neq \tfrac{m-2p}{2}$, the subspace of
$\mathbbm{V}_{s,\overline{m-2p-s}}
\oplus\mathbbm{V}_{m-2p-s,\overline{s}}$ is invariant under
$\tau$ and decomposes into two real submodules of the same dimension.
Both submodules are irreducible and equivalent to
$\mathbbm{V}_{m-2p}^{\mathbb{R}}$.
$\mathbbm{V}_{\tfrac{m-2p}{2},\overline{\tfrac{m-2p}{2}}}$ is a
real module which is also isomorphic to
$\mathbbm{V}_{m-2p}^{\mathbb{R}}$. If $m$ is odd, the subspace of
$\mathbbm{V}_{s,\overline{m-2p-s}}\oplus
\mathbbm{V}_{m-2p-s,\overline{s}}$ is invariant under $\tau$,
irreducible, and equivalent to
$\mathbbm{V}_{m-2p}^{\mathbb{R}}$. All in all, we have

\begin{equation}
\label{N10S2Decomp0}
S^m_{\mathbb{R}}(\mathbb{C}^2) = \left\{\begin{array}{ll}
\bigoplus\limits_{p=0}^{\tfrac{m}{2}} (2p +1)
\mathbbm{V}_{2p}^{\mathbb{R}} & \text{if $m$ is even}\\ & \\
\bigoplus\limits_{p=0}^{\tfrac{m-1}{2}} (p+1)
\mathbbm{V}_{2p+1}^{\mathbb{R}} & \text{if $m$ is odd}\\
\end{array}\right.
\end{equation}

$\mathbbm{V}_2^{ \mathbb{R}}$ can be identified with
$\mathbbm{V}_{1,\overline{1}}$. $S^m( \mathbbm{V}_2^{\mathbb{R}})$
therefore is a submodule of $\mathbbm{V}_{m,\overline{m}}$.
With the help of the above considerations we obtain

\begin{equation}
\label{N10S2Decomp}
S^m(\mathbbm{V}_2^{\mathbb{R}}) = \bigoplus_{p=0}^{\lfloor
\tfrac{m}{2}\rfloor} \mathbbm{V}_{m-2p,\overline{m-2p}} =
\bigoplus_{p=0}^{\lfloor \tfrac{m}{2}\rfloor}
\mathbbm{V}_{2m-4p}^{\mathbb{R}}\:.
\end{equation}

We are now able to calculate the dimension of $W_m^h$ and
$W_m^v$. By a short calculation we see that

\begin{equation}
\mathfrak{p} = \mathbbm{V}_1^{\mathbb{C}} \oplus
\mathbbm{V}_0^{\mathbbm{R}}\:.
\end{equation}

With the help of (\ref{N10S2Decomp0}) we conclude that

\begin{equation}
S^2(\mathfrak{p}) = S^2(\mathbbm{V}_1^{\mathbb{C}}) \oplus
\left(\mathbbm{V}_1^{\mathbb{C}}\otimes
\mathbbm{V}_0^{\mathbb{R}}\right)
\oplus S^2(\mathbbm{V}_0^{\mathbb{R}}) =
3\mathbbm{V}_2^{\mathbb{R}} \oplus \mathbbm{V}_1^{\mathbb{C}}
\oplus 2\mathbbm{V}_0^{\mathbb{R}}
\end{equation}

and finally obtain

\begin{equation}
\dim{W_m^h} = \left\{\begin{array}{ll} 2 & \text{if $m$ is even}
\\ 3 & \text{if $m$ is odd} \\
\end{array}
\right.
\end{equation}

We interpret these numbers and start with the case $m=0$. Any
$SU(2)$-invariant metric on the singular orbit is diagonal and
satisfies $b(0)^2=c(0)^2$. $\dim{W_0^h}=2$ therefore simply
means that we can choose the initial values $b_0$ and $f_0$
freely. Let $b_m$, $c_m$, and $f_m$ denote the $m^{th}$
coefficients of the power series for $b$, $c$, and $f$. Analogously
to the case $m=0$, $\dim{W_m^h} = 2$ for even $m$ means
that $b_m = c_m$.

There is a suitable $h\in SU(2)$ which acts on
$\text{span}(e_3,e_4,e_5,e_6) \subseteq \mathfrak{p}$ in
the same way as $j\in Sp(1)$ by right-multiplication on
$\mathbb{H}$. Since $\mathfrak{p}^{\perp}$ is
three-dimensional, $j$ acts as an rotation around an angle of $\pi$.
Moreover, it is a rotation around an axis perpendicular to
$\tfrac{\partial}{\partial t}$ and thus turns
$\tfrac{\partial}{\partial t}$ into $-\tfrac{\partial}{\partial t}$.
Since the metric $g$ in invariant under $h$, it follows that

\begin{equation*}
\begin{aligned}
g_t(e_3,e_3) & = g_{-t}(e_5,e_5)\\
g_t(e_3,e_5) & = - g_{-t}(e_5,e_3)\\
g_t(e_3,e_6) & = - g_{-t}(e_5,e_4) = -g_{-t}(e_3,e_6)\\
g_t(e_7,e_7) & = g_{-t}(e_7,e_7)\\
\end{aligned}
\end{equation*}

This translates into $b_m=-c_m$ and $f_m=0$ if $m$ is odd. The
space of all $U(1)_{1,-1}$-invariant elements of $S^2(\mathfrak{p})$
satisfying these conditions has dimension $3$ which equals $\dim{W_m^h}$.
Therefore, there are no further conditions on the horizontal part and
the horizontal part of an analytic metric is smooth if and only if

\begin{equation}
\label{S2Smooth}
b(t)  =  c(-t)\quad \text{and} \quad f, \beta_{3,5},\beta_{3,6}\:\:\text{are even.}
\end{equation}

We turn to the vertical component of the metric, which is determined by $a$.
It follows from (\ref{N10S2Decomp}) that

\begin{equation}
S^2(\mathfrak{p}^\perp) = \mathbbm{V}_4^{\mathbb{R}} \oplus
\mathbbm{V}_0^{\mathbbm{R}}\:.
\end{equation}

Since $\dim{\text{Hom}_{SU(2)}(\mathbbm{V}_4^{\mathbb{R}},
\mathbbm{V}_4^{\mathbb{R}})} = 1$, we have

\begin{equation}
\dim{W_m^v} = \left\{\begin{array}{ll} 1 & \text{if $m=0$} \\ 0 &
\text{if $m$ is odd} \\ 2 & \text{if $m\geq 2$ and even} \\
\end{array}
\right.
\end{equation}

For similar reasons as in Subsection \ref{N11S1Sec}, this means that
$a$ has to be an odd function. The only missing smoothness
condition is that the length $\ell(t)$ of any great circle of the collapsing
sphere $SU(2)/U(1)_{1,-1}$ has to be $2\pi t + O(t^2)$ for small $t$.
The Lie group which is generated by $e_1$ intersects $U(1)_{1,-1}$
twice. Therefore, $\ell(t)=\sqrt{g_t(e_1,e_1)}\:\pi = |a(t)|\pi$. By
the same argument as in Subsection \ref{N11S1Sec}, it follows that
$|a'(0)|$ has to be $2$, which is indeed the case.

As in the previous case, we make an explicit calculation and prove
that for any choice of $b_0,f_0\in\mathbb{R}\setminus\{0\}$ there
exists a unique power series solution of (\ref{HolRedNkl}).
$(a(t),b(t),c(t),f(t)) \mapsto (-a(-t),c(-t), b(-t),f(-t))$ is a symmetry
of (\ref{HolRedNkl}) if $(k,l)=(1,-1)$. As in the previous subsection,
it follows that the power series satisfies the smoothness conditions.

Unfortunately, Assumption \ref{WEschAssumpt} is not satisfied,
since $\mathfrak{p}$ and $\mathfrak{p}^{\perp}$ both contain
a trivial submodule, namely $\text{span}(e_7)$ or
$\text{span}(\tfrac{\partial}{\partial t})$ respectively. Nevertheless,
we always have due to the choice of our coordinates $g(e_7,
\tfrac{\partial}{\partial t})=0$. Moreover, we have $\text{Ric}(e_7,
\tfrac{\partial}{\partial t})=0$ (see Grove, Ziller \cite{Gro}). Therefore,
$g$, $\text{Ric}$, and the steps of the Picard iteration in
Eschenburg, Wang \cite{Esch} do not leave the space $S^2(\mathfrak{p}) \oplus
S^2(\mathfrak{p}^{\perp})$. The arguments of the proof in
\cite{Esch} therefore remain unchanged and we can conclude that the
power series converges and are able to determine the number of the
Einstein metrics.

\begin{Th}
Let $M$ be a cohomogeneity-one manifold whose principal orbit is
$N^{1,-1}$. We assume that $M$ has exactly one singular orbit of
type $S^5$, which is at $t=0$.

\begin{enumerate}
    \item In this situation, any cohomogeneity-one metric satisfies
    $a(0)=0$.

    \item For any $b_0,f_0\in\mathbb{R}\setminus\{0\}$ there exists a
    unique $SU(3)$-invariant parallel Spin($7$)-structure $\Omega$ on a
    tubular simultaneous of $S^5$ which is determined by a
    basis of type (\ref{CBasisNkl}) and satisfies $b(0)=c(0)=b_0$
    and $f(0)=f_0$. The holonomy of the associated metric is all
    of Spin($7$).

    \item For any choice of $b_0,f_0\in\mathbb{R}\setminus\{0\}$
    and $b_1,\beta, \widetilde{\beta},$ $a_3,\lambda\in\mathbb{R}$, there
    exists a unique $SU(3)$-invariant Einstein metric on a tubular
    neighborhood of $S^5$ such that

    \begin{enumerate}
        \item $b(0)^2=c(0)^2=b_0^2$, $f(0)^2=f_0^2$,
        \item $(b-c)'(0)=b_1$, $\beta_{3,5}'(0)=\beta$,
        $\beta_{3,6}'(0)=\widetilde{\beta}$,
        \item $a'''(0)=a_3$, and
        \item the Einstein constant is $\lambda$.
    \end{enumerate}
\end{enumerate}
\end{Th}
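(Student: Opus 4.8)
The plan is to assemble the pieces that have been prepared in this section. For part \textbf{(1)} I would observe that the isotropy algebra of the $SU(3)$-action on the singular orbit $S^5\cong SU(3)/SU(2)$ is $\mathfrak{u}(1)_{1,-1}\oplus V_1$, so that $V_1=\text{span}(e_1,e_2)$ lies in the normal space $\mathfrak{p}^\perp$ and spans the directions tangent to the collapsing sphere $SU(2)/U(1)_{1,-1}\cong S^2$. The $0^{th}$-order smoothness condition in the vertical direction (Remark \ref{SmoothnessRemark}) then forces $\lim_{t\to 0}g_t(e_1,e_1)=0$, i.e.\ $a(0)=0$; this is a statement about the metric only, so it holds for every cohomogeneity-one metric with this orbit structure, whether or not its holonomy is special.

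For part \textbf{(2)} I would first note that $a(0)=0$ forces $b(0)^2=c(0)^2$, since otherwise the terms $\tfrac{c^2+a^2-b^2}{abc}$ and $\tfrac{a^2+b^2-c^2}{abc}$ in (\ref{HolRedNkl}) blow up as $t\to 0$; using the symmetry $c\mapsto -c$, $f\mapsto -f$, $t\mapsto -t$ of the system I may assume $b(0)=c(0)=:b_0$. Then I make the power series ansatz for (\ref{HolRedNkl}) with $(k,l)=(1,-1)$; the key computational step is to check that the resulting recursion determines all higher coefficients uniquely from $(b_0,f_0)$, giving the formal solution (\ref{N10S2PowSer}). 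To verify the smoothness conditions collected above I use the symmetry $(a,b,c,f)(t)\mapsto(-a(-t),c(-t),b(-t),f(-t))$ of (\ref{HolRedNkl}): it sends the power series solution to a solution with the same initial data, so by uniqueness $a$ is odd, $f$ is even, and $b(t)=c(-t)$, while the coefficients of (\ref{N10S2PowSer}) give $|a'(0)|=2$; the off-diagonal functions $\beta_{3,5},\beta_{3,6}$ are identically zero (the associated metric is diagonal) and hence trivially even. Convergence is the one point where Assumption \ref{WEschAssumpt} fails, because $\text{span}(e_7)\subseteq\mathfrak{p}$ and $\text{span}(\tfrac{\partial}{\partial t})\subseteq\mathfrak{p}^\perp$ are both trivial $H$-modules; but $g(e_7,\tfrac{\partial}{\partial t})=0$ and $\text{Ric}(e_7,\tfrac{\partial}{\partial t})=0$ (Grove, Ziller \cite{Gro}), so the Picard iteration of Eschenburg and Wang \cite{Esch} stays inside $S^2(\mathfrak{p})\oplus S^2(\mathfrak{p}^\perp)$ and their convergence proof applies unchanged. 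Theorem \ref{HitchinThm} then upgrades the convergent $\omega_t$ to a parallel $SU(3)$-invariant $\text{Spin}(7)$-structure $\Omega$ on a tubular neighborhood of $S^5$, and uniqueness among those coming from a basis of type (\ref{CBasisNkl}) with the prescribed initial data is immediate from uniqueness of the power series together with analyticity. Finally, since $N^{1,-1}$ is $SU(3)$-equivariantly diffeomorphic to $N^{1,0}$ and $\Omega$ comes from a basis of type (\ref{CBasisNkl}), Theorem \ref{NklThm1} gives that the holonomy is all of $\text{Spin}(7)$.

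For part \textbf{(3)}, with $a(0)=0$ we are in the setting of Theorem \ref{EinsteinCohom1Solution}, once more modulo the failure of Assumption \ref{WEschAssumpt}, which is handled exactly as in part (2): for diagonal metrics the Einstein equations preserve $S^2(\mathfrak{p})\oplus S^2(\mathfrak{p}^\perp)$, so Remark \ref{WEschThmRem}.\ref{ConvRem} applies. It then remains to read off the free parameters from the dimension counts already obtained — $\dim W_0^h=2$, $\dim W_m^h=2$ for even $m$, $\dim W_m^h=3$ for odd $m$, $\dim W_0^v=1$, $\dim W_2^v=2$, $\dim W_m^v=0$ for odd $m$. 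The two dimensions of $W_0^h$ are $b_0$ and $f_0$ (with $b_0=c_0$ forced by $SU(2)$-invariance); $W_m^h/W_{m-2}^h=0$ for all $m\ge 2$, so the only higher horizontal freedom is $W_1^h$, of dimension $3$, which after imposing the smoothness relations $b_m=-c_m$, $f_m=0$ (odd $m$) is realized by $(b-c)'(0)$, $\beta_{3,5}'(0)$, $\beta_{3,6}'(0)$; $W_2^v/W_0^v$ is one-dimensional, and by the polar-coordinate shift (Remarks \ref{SmoothnessRemark} and \ref{WEschThmRem}) this order-$2$ vertical parameter is in fact the third-order coefficient $a'''(0)$; and $\lambda$ is the Einstein constant. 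Feeding this list into Theorem \ref{EinsteinCohom1Solution} yields existence and uniqueness of the Einstein metric for each $(b_0,f_0,b_1,\beta,\widetilde{\beta},a_3,\lambda)$.

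I expect the main difficulty to be bookkeeping rather than any isolated deep step: making the recursion for (\ref{HolRedNkl}) rigorous enough to conclude unique formal solvability, and matching each summand of $W_m^h$ and $W_m^v$ to the correct Taylor coefficient of $a,b,c,f,\beta_{3,5},\beta_{3,6}$ while discarding the coordinate directions $g(\tfrac{\partial}{\partial t},\cdot)$. The one genuinely non-routine point — that the Eschenburg and Wang Picard iteration still converges despite the common trivial submodule — has essentially been reduced to the identity $\text{Ric}(e_7,\tfrac{\partial}{\partial t})=0$, which is standard, so I do not anticipate a real obstruction there.
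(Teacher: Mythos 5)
Your proposal follows the paper's argument essentially step by step: the identification $\mathfrak{k}=\mathfrak{u}(1)_{1,-1}\oplus V_1$ forcing $a(0)=0$, the necessity of $b(0)^2=c(0)^2$ from the $\tfrac{1}{a}$-singularities in (\ref{HolRedNkl}), the power series (\ref{N10S2PowSer}), the $SU(2)$-module decompositions and the resulting dimensions $\dim W_m^h$ and $\dim W_m^v$, the smoothness via the symmetry $(a,b,c,f)(t)\mapsto(-a(-t),c(-t),b(-t),f(-t))$, the workaround for the failure of Assumption \ref{WEschAssumpt} via $g(e_7,\tfrac{\partial}{\partial t})=\text{Ric}(e_7,\tfrac{\partial}{\partial t})=0$, the appeal to Theorem \ref{NklThm1} for full $\text{Spin}(7)$-holonomy, and the parameter count $2+3+1+1=7$ from $W_0^h$, $W_1^h$, $W_2^v/W_0^v$, and $\lambda$. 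This is the same approach as the paper, and the bookkeeping is correct.
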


\begin{Rem}
\begin{enumerate}
    \item The metrics with singular orbit $S^5$ and holonomy\linebreak
    Spin($7$) were also considered by Cveti\v{c} et al.
    \cite{Cve}. The discussion of the smoothness conditions,
    the convergence of the power series and the existence of
    the Einstein metrics are new results.
    \item If the principal orbit is $N^{1,-1}$, the singular orbit
    can also be a space of type $SU(3)/SO(3)$. However, it is
    impossible that the metric on $SU(3)/SO(3)$ is positive and
    the volume of $SO(3)/U(1)_{1,-1}$ shrinks to zero if
    the metric is diagonal with respect to $(e_i)_{i=1\leq i\leq 7}$.
    Although it is possible to construct spaces with singular
    orbit $SU(3)/$ $SO(3)$ by considering non-diagonal metrics,
    we will not investigate this case further.
\end{enumerate}
\end{Rem}

\section{$\mathbb{CP}^2$ as singular orbit}

We assume that the principal orbit is a generic Aloff-Wallach space
$N^{k,l}$. In this situation, the isotropy algebra $\mathfrak{k}$ of
the $SU(3)$-action on $\mathbb{CP}^2$ is either
$\mathfrak{u}(1)_{k,l}\oplus V_1 \oplus V_4$,
$\mathfrak{u}(1)_{k,l}\oplus V_2 \oplus V_4$, or
$\mathfrak{u}(1)_{k,l} \oplus V_3 \oplus V_4$. As we have remarked
in Convention \ref{NklConv2}, it suffices to work with the case
$\mathfrak{k}=\mathfrak{u}(1)_{k,l}\oplus V_1\oplus V_4$ if we
consider all pairs $(k,l)$ of coprime integers with $k\geq l$.

If $(k,l)=(1,-1)$, $\mathfrak{u}(1)_{k,l}\oplus
V_1\oplus V_4$ is not a possible choice of $\mathfrak{k}$
since $K/U(1)_{1,-1}$ is diffeomorphic to $S^2\times S^1$.
Nevertheless, the two related cases $(k,l)\in\{(1,0),(0,-1)\}$ are
still possible.

If $k=l=1$ and the metric is diagonal,
$\mathfrak{u}(1)_{1,1}\oplus V_1 \oplus V_4$,
$\mathfrak{u}(1)_{1,1}\oplus V_2 \oplus V_4$, and
$\mathfrak{u}(1)_{1,1} \oplus V_3 \oplus V_4$ are still the
only possibilities for $\mathfrak{k}$. If the complement of
$\mathfrak{u}(1)_{1,1}$ was transversely embedded into
$V_1\oplus V_2\oplus V_3$, the null space of the
degenerate metric $g_0\in S^2(\mathfrak{m})$ could not be
$\mathfrak{k}$. Moreover,
$\mathfrak{u}(1)_{1,1}\oplus V_2 \oplus V_4$ and
$\mathfrak{u}(1)_{1,1} \oplus V_3 \oplus V_4$ can be
obtained by the action of an element of
$\text{Norm}_{SU(3)}U(1)_{1,1}$ from each other. We therefore
have to consider only one of these cases. All in all, we have to
consider the following three initial values problems.

\begin{enumerate}
    \item The system (\ref{HolRedNkl}) with $a(0)=f(0)=0$ and $(k,l)\not\in\{(1,-1),(1,1),$ $(1,-2),(2,-1)\}$,
    \item The system (\ref{HolRedN11}) with $a_1(0)=a_2(0)=f(0)=0$,
    \item The system (\ref{HolRedN11}) with $b(0)=f(0)=0$.
\end{enumerate}

We make a power series ansatz for all of the above initial value problems and
start with the first one. As in Section \ref{S5SingSection}, we can assume
without loss of generality that $b(0)=c(0)=:b_0$. The Taylor expansion of
any solution of our initial value problem begins with

\begin{equation}
\label{PowSerS3Zkl}
\begin{array}{l}
\begin{array}{rclrrrrrrrrl}
a(t) & = & 0  & + & t & + & 0\cdot t^2 & - &
\tfrac{1}{24b_0^2}\tfrac{12\Delta+q (k+l)}{\Delta} t^3 & + &
0\cdot t^4 & +\ldots \\
\end{array} \\ \\
\begin{array}{rclrrrrrr}
b(t) & = & b_0 & + & 0\cdot t & + &
\tfrac{1}{6b_0}\tfrac{4k+5l}{k+l} t^2 & + & 0\cdot t^3 \\
\end{array} \\ \\
\begin{array}{rrrl}
\quad\quad\quad\quad\quad & + & \tfrac{1}{288b_0^3}
\tfrac{(-104k^2 - 224kl - 140l^2)\Delta + q(-k^3 - k^2l + kl^2 +
l^3)}{\Delta^2} t^4 & + \ldots\\
\end{array} \\ \\
\begin{array}{rclrrrrrr}
c(t) & = & b_0 & + & 0\cdot t & + &
\tfrac{1}{6b_0}\tfrac{5k+4l}{k+l} t^2 & + & 0\cdot t^3 \\
\end{array} \\ \\
\begin{array}{rrrl}
\quad\quad\quad\quad\quad & + & \tfrac{1}{288b_0^3}
\tfrac{(-140k^2 - 224kl - 104l^2)\Delta + q(k^3 + k^2l - kl^2 -
l^3)}{\Delta^2} t^4 & + \ldots\\
\end{array} \\ \\
\begin{array}{rclrrrrrrrrl}
f(t) & = & 0 & + & \tfrac{2\Delta}{k+l}t & + & 0\cdot t^2 & + &
\tfrac{q}{6b_0^2} t^3 & + & 0\cdot t^4 & + \ldots\\
\end{array} \\
\end{array}
\end{equation}

The parameter $q$ of third order can be chosen freely. Any
solution of (\ref{HolRedN11}) with $a_1(0)=a_2(0)=f(0)$ also has
to satisfy $b(0)^2=c(0)^2$. We again can assume that
$b(0)=c(0)=:b_0$, since $(c(t),f(t))\mapsto (-c(-t),-f(-t))$ is a
symmetry of the system (\ref{HolRedN11}), and obtain the following
Taylor expansion.

\begin{equation}
\label{PowSerN11S3}
\begin{array}{l}
\begin{array}{rclrrrrrrrl}
a_1(t) & = & 0 & + & t & + & 0\cdot t^2 & + &
\tfrac{q_1}{6b_0^2}t^3 & + & 0\cdot t^4 \\
\end{array} \\ \\
\begin{array}{rrrl}
\quad\quad\quad\quad\quad\: & + & \tfrac{2q_2^1 -
3q_1q_2 - 3q_2^2 - 3q_1 - 18q_2}{60 b_0^4}t^5 & + \ldots \\
\end{array} \\ \\
\begin{array}{rclrrrrrrrl}
a_2(t) & = & 0 & + & t & + & 0\cdot t^2 & + &
\tfrac{q_2}{6b_0^2}t^3 & + & 0\cdot t^4 \\
\end{array} \\ \\
\begin{array}{rrrl}
\quad\quad\quad\quad\quad\: & + & \tfrac{-3q_1^2 -
3q_1q_2 + 2q_2^2 -18q_1 -3q_2}{60 b_0^4}t^5 & + \ldots \\
\end{array} \\ \\
\begin{array}{rclrrrrrrrrrrl}
b(t)\:\:\: & = & b_0\!\!\! & + & 0\cdot t & + & \tfrac{3}{4b_0}t^2
& + & 0\cdot t^3 & - & \tfrac{39}{96 b_0^3}t^4 & + & 0\cdot t^5 &
+ \ldots \\
\end{array} \\ \\
\begin{array}{rclrrrrrrrrrrl}
c(t)\:\:\: & = & b_0\!\!\! & + & 0\cdot t & + & \tfrac{3}{4b_0}t^2
& + & 0\cdot t^3 & - & \tfrac{39}{96 b_0^3}t^4 & + & 0\cdot t^5 &
+\ldots \\
\end{array} \\ \\
\begin{array}{rclrrrrrrrl}
f(t)\:\:\: & = & 0\!\! & + & 3t & + & 0\cdot t^2 & - &
\tfrac{6+q_1+q_2}{2b_0^2} t^3 & + & 0\cdot t^4 \\
\end{array} \\ \\
\begin{array}{rrrl}
\quad\quad\quad\quad\quad\: & + &\tfrac{2q_1^2 + 7q_1q_2 + 2q_2^2
+ 27q_1 + 27q_2 +90}{20b_0^4} t^5 & +\ldots \\
\end{array}
\end{array}
\end{equation}

Analogously to the previous case, there are two parameters $q_1$ and
$q_2$ which can be chosen freely. The functions $b$ and $c$ coincide
up to fifth order. Later on, we will prove that actually $b(t)=c(t)$ for all
values of $t$. Next we consider the equations (\ref{HolRedN11}) under
the assumption that $b(0)=f(0)=0$. We necessarily have
$c(0)^2=a_1(0)^2=a_2(0)^2$. Let $a_0:=a_1(0)$. Since there are four
possibilities for the signs of $a_2(0)$ and $c(0)$, there are four kinds of
initial value problems. Because of the symmetry of (\ref{HolRedN11})
which we have used in the previous case, we can assume that the sign
of $a_1(0)$ and $c(0)$ is the same. Therefore, the only two subcases which we have
to consider are $a_1(0)=a_2(0)$ and $a_1(0)=-a_2(0)$. The initial condition
$a_1(0)=a_2(0)$ yields the following power series.

\begin{equation}
\label{PowSerN11S3B}
\begin{array}{l}
\begin{array}{rclrrrrrrrrrrl}
a_1(t) & = & a_0 & + & 0\cdot t & + & \tfrac{1}{a_0}t^2 & + &
0\cdot t^3 & - & \tfrac{q+21}{24a_0^3} t^4 & + & 0\cdot t^5 & +
\ldots \\
\end{array} \\ \\
\begin{array}{rclrrrrrrrrrrl}
a_2(t) & = & a_0 & + & 0\cdot t & + &
\tfrac{1}{a_0}t^2 & + & 0\cdot t^3 & - & \tfrac{q+21}{24a_0^3} t^4
& + & 0\cdot t^5 & + \ldots \\
\end{array} \\ \\
\begin{array}{rclrrrrrrrl}
b(t)\:\: & = & 0\:\:\: & + & t & + & 0\cdot t^2 & + &
\tfrac{q}{6a_0^2} t^3 & + & 0\cdot t^4 \\
\end{array} \\ \\
\begin{array}{rrrl}
\quad\quad\quad\quad\quad\:\:\: & - & \tfrac{8q^2 + 42q + 9}{120
a_0^4}t^5 & + \ldots \\
\end{array} \\ \\
\begin{array}{rclrrrrrrrrrrl}
c(t)\:\: & = & a_0 & + & 0\cdot t & + &
\tfrac{1}{2a_0}t^2 & + & 0\cdot t^3 & + & \tfrac{q-6}{24a_0^3}t^4
& + & 0\cdot t^5 & +\ldots \\
\end{array} \\ \\
\begin{array}{rclrrrrrrrl}
f(t)\:\: & = & 0\:\: & - & 6t & + & 0\cdot t^2 & + &
\tfrac{2(q+3)}{a_0^2}t^3 & + & 0\cdot t^4 \\
\end{array} \\ \\
\begin{array}{rrrl}
\quad\quad\quad\quad\quad\:\:\: & - &\tfrac{11q^2 + 54q
+123}{10a_0^4} t^5 & +\ldots \\
\end{array}
\end{array}
\end{equation}

where $q$ is a free parameter. Later on it will be proven that $a_1(t)=a_2(t)$
for all $t$. Next, we study (\ref{HolRedN11})
under the assumption that $b(0)=f(0)=0$ and $a_1(0)=-a_2(0)$. We
obtain a system of quadratic equations for the first derivatives
$(a_1'(0),\ldots,f'(0))$. The only two meaningful solutions
of that system are $(0,0,1,0,6)$ and $(0,0,-1,0,6)$. $(a_1(t),a_2(t),b(t),c(t),f(t))
\mapsto (-a_2(-t),-a_1(-t),$ $b(-t),c(-t),-f(-t))$ is another symmetry
of (\ref{HolRedN11}). Since it maps a solution with $b'(0)=1$
into a solution with $b'(0)=-1$, we only need to consider the case
$b'(0)=1$ and obtain

\begin{equation}
\label{PowSerN11S3C}
\begin{array}{l}
\begin{array}{rclrrrrrrrl}
a_1(t) & = & a_0 & + & 0\cdot t & + & \tfrac{1}{a_0}t^2 & + &
0\cdot t^3 & + & \tfrac{3q-23}{24a_0^3} t^4 \\
\end{array} \\ \\
\begin{array}{rrrl}
\quad\quad\quad\quad\quad\quad\!\! & + & 0\cdot t^5 & + \ldots
\end{array}\\ \\
\begin{array}{rclrrrrrrrl}
a_2(t) & = & -a_0 & + & 0\cdot t & + & \tfrac{q-2}{a_0}t^2 & + &
0\cdot t^3 & - & \tfrac{12q^2 - 25q - 7}{24a_0^3} t^4 \\
\end{array} \\ \\
\begin{array}{rrrl}
\quad\quad\quad\quad\quad\quad\quad\!\!\! & + & 0\cdot t^5 & +
\ldots
\end{array}\\ \\
\begin{array}{rclrrrrrrrl}
b(t)\:\: & = & 0 & + & t & + & 0\cdot t^2 & - & \tfrac{1}{6a_0^2}
t^3 & + & 0\cdot t^4 \\
\end{array} \\ \\
\begin{array}{rrrl}
\quad\quad\quad\quad\quad
& - & \tfrac{39q^2 - 114q +25}{240a_0^4} t^5 & + \ldots \\
\end{array} \\ \\
\begin{array}{rclrrrrrrrrrrl}
c(t)\:\: & = & a_0 & + & 0\cdot t & + & \tfrac{q}{2a_0}t^2 & + &
0\cdot t^3 & + & \tfrac{3q^2 - 13q +3}{24a_0^3}t^4 \\
\end{array} \\ \\
\begin{array}{rrrl}
\quad\quad\quad\quad\quad\quad\!\! & + & 0\cdot t^5 & + \ldots
\end{array}\\ \\
\begin{array}{rclrrrrrrrl}
f(t)\:\: & = & 0 & + & 6t & + & 0\cdot t^2 & - &
\tfrac{4}{a_0^2}t^3 & + & 0\cdot t^4 \\
\end{array} \\ \\
\begin{array}{rrrl}
\quad\quad\quad\quad\quad\: & + & \tfrac{3q^2 - 18q +175}{20a_0^4}
t^5 & + \ldots \\
\end{array}
\end{array}
\end{equation}

As usual, $q$ can be chosen freely. Our aim is to prove that the three initial value
problems have a unique smooth solution for any choice of the parameters $a_0$, $b_0$,
$q$, $q_1$, and $q_2$. We therefore have to check the smoothness conditions for
the above power series. In \cite{Rei}, we have proven that an analytic diagonal metric $g=g_t
+ dt^2$ of cohomogeneity one has a smooth extension to a singular orbit at $t=0$ if

\begin{enumerate}
    \item $g_t$ converges for $t\rightarrow 0$ to a degenerate bilinear form which
    is invariant with respect to the cohomogeneity-one action.
    \item The sectional curvature of the collapsing sphere behaves as $\tfrac{1}{t}
    + O(1)$ for $t\rightarrow 0$.
    \item The coefficient functions of the horizontal part are even.
    \item The coefficient functions of the vertical part are odd.
\end{enumerate}

We remark that this result can also be applied to orbifold metrics. The metric
on the singular orbit $\mathbb{CP}^2$ is in all three cases the Fubini study
metric and thus $SU(3)$-invariant.

In order to check the second smoothness condition, which we have
also mentioned in Remark \ref{SmoothnessRemark}, we have to search
for the metric $h$ with constant sectional curvature $1$ on
$K/U(1)_{k,l}$. Let $\widetilde{h}$ with
$\widetilde{h}(X,Y)=-\tfrac{1}{2} \text{tr}(XY)$ for all
$X,Y\in\mathfrak{su}(2)$ be the metric with sectional curvature
$1$ on $SU(2)$. We embed $SU(2)$ into $SU(3)$ such that its Lie
algebra becomes $\mathfrak{u}(1)_{1,-1}\oplus V_1$. The map
$\pi:SU(2)\rightarrow K/U(1)_{k,l}$ with $\pi(k):=k U(1)_{k,l}$ is
a covering map. $h$ therefore has to satisfy

\begin{equation}
\|d\pi(X)\|_h = \|X\|_{\widetilde{h}}\:.
\end{equation}

With the help of this formula, we see that

\begin{enumerate}
    \item in the case where $a(0)=f(0)=0$, $\|e_1\|_q=\|e_2\|_q=1$ and
    $\|e_7\|_q=|\tfrac{2\Delta}{k+l}|$. We therefore obtain $|a'(0)|=1$ and
    $|f'(0)|=|\tfrac{2\Delta}{k+l}|$.
    \item in the case where $a_1(0)=a_2(0)=f(0)=0$, $\|e_1\|_q=\|e_2\|_q=1$ and
    $\|e_7\|_q=3$. We therefore obtain $|a_1'(0)|=|a_2'(0)|=1$ and
    $|f'(0)|=3$.
    \item in the case where $b(0)=f(0)=0$, $\|e_3\|_q=\|e_4\|_q=1$ and
    $\|e_7\|_q=6$. We therefore obtain $|b'(0)|=1$ and $|f'(0)|=6$.
\end{enumerate}

Here, $q$ denotes the biinvariant metric on $\mathfrak{su}(3)$
with $q(X,Y)=-\tfrac{1}{2} \text{tr}(XY)$  which we have
introduced earlier.

The power series (\ref{PowSerS3Zkl}), (\ref{PowSerN11S3}),
(\ref{PowSerN11S3B}), and (\ref{PowSerN11S3C}) obviously satisfy
the first two smoothness conditions. As in the previous two
sections, we can prove that the initial value problems have a
unique power series solution for any choice of $a_0$, $b_0$, $q$,
$q_1$, and $q_2$. We are now able to prove the remaining
smoothness conditions by means of symmetry arguments.

\begin{enumerate}
    \item $(a(t),b(t),c(t),f(t))\mapsto (-a(-t),b(-t),c(-t),-f(-t))$ is
    a symmetry of (\ref{HolRedNkl}). Any solution of that system
    with $a(0)=f(0)=0$, $b(0)=c(0)=b_0$, and
    $f'''(0)=\tfrac{q}{b_0^2}$ is mapped by the symmetry to
    another solution with the same initial values. $a$ and $f$
    therefore are odd functions and $b$ and $c$ are even. The power series
    (\ref{PowSerS3Zkl}) thus satisfies all smoothness conditions.
    \item The smoothness of (\ref{PowSerN11S3}) can be proven
    with the help of the symmetry $(a_1(t),a_2(t),b(t),c(t),f(t)) \mapsto
    (-a_1(-t),-a_2(-t),b(-t),c(-t),$ $-f(-t))$ of (\ref{HolRedN11}).
    The relation $b(t)=c(t)$ follows with the help of the symmetry
    $(b,c)\mapsto(c,b)$.
    \item The smoothness of (\ref{PowSerN11S3B}) can be proven
    with the help of the symmetry $(a_1(t),a_2(t),b(t),c(t),f(t)) \mapsto
    (a_1(-t),a_2(-t),-b(-t),c(-t),$ $-f(-t))$ of (\ref{HolRedN11}) and
    the relation $a_1(t)=a_2(t)$ follows with the help of the symmetry
    $(a_1,a_2)\mapsto(a_2,a_1)$.
    \item The smoothness of (\ref{PowSerN11S3C}) can be proven
    with the help of the symmetry $(a_1(t),a_2(t),b(t),c(t),f(t)) \mapsto
    (a_2(-t),-a_1(-t),-b(-t),c(-t),$ $-f(-t))$ of (\ref{HolRedN11}).
\end{enumerate}

We finally have to prove that the power series converge. In the
setting of this section, Assumption \ref{WEschAssumpt} is not
always satisfied. If $k=l=1$ and $b(0)=f(0)=0$, $\mathfrak{p}$ and
$\mathfrak{p}^{\perp}$ both contain a trivial
$U(1)_{1,1}$-submodule. The spaces $\mathfrak{p}$ and
$\mathfrak{p}^{\perp}$ also contain a common submodule if $k=1$,
$l=0$, and $a(0)=f(0)=0$. We can nevertheless prove the
convergence with the help of the arguments which we have made in
Remark \ref{WEschThmRem}.\ref{ConvRem}.

Since \ref{WEschAssumpt} is in some cases not satisfied and we
have not described the spaces $W_m^h$ and $W_2^v$ explicitly, we
will not study the existence of Einstein metrics on a tubular
simultaneous of $\mathbb{CP}^2$. We conclude this section by
summarizing our results on metrics with special holonomy.

\begin{Th}
Let $M$ be a cohomogeneity-one orbifold whose principal orbit is $N^{k,l}$.
We assume that $M$ has exactly one singular orbit of type $\mathbb{CP}^2$,
which is at $t=0$.

\begin{enumerate}
    \item Let $N^{k,l}$ be not $SU(3)$-equivariantly diffeomorphic to
    $N^{1,1}$ and let $k+l\neq 0$. For any $b_0\in\mathbb{R}
    \setminus\{0\}$ and $q\in\mathbb{R}$ there exists a unique
    $SU(3)$-invariant parallel Spin($7$)-structure $\Omega$ on a
    tubular simultaneous of $\mathbb{CP}^2$ which is determined
    by a basis of type (\ref{CBasisNkl}) and satisfies $a(0)=f(0)=0$,
    $b(0)=c(0)=b_0$, and $f'''(0)=\tfrac{q}{b_0^2}$. The holonomy
    of the associated metric is all of Spin($7$).

    \item Let $k=l=1$. For any $b_0\in\mathbb{R}\setminus\{0\}$
    and $q_1,q_2\in\mathbb{R}$ there exists a unique $SU(3)$-invariant
    parallel Spin($7$)-structure $\Omega$ on a tubular simultaneous of
    $\mathbb{CP}^2$ which is determined by a basis of type
    (\ref{CBasisN11}) and satisfies $a_1(0)=a_2(0)=f(0)=0$,
    $b(0)=c(0)=b_0$, $a_1'''(0)=\tfrac{q_1}{b_0^2}$ and
    $a_2'''(0)=\tfrac{q_2}{b_0^2}$. Moreover, we have $b(t) = c(t)$
    for all values of $t$.

    \item Let $k=l=1$. For any $a_0\in\mathbb{R}\setminus\{0\}$
    and $q\in\mathbb{R}$ there exists a unique $SU(3)$-invariant
    parallel Spin($7$)-structure $\Omega$ on a tubular simultaneous of
    $\mathbb{CP}^2$ which is determined by a basis of type
    (\ref{CBasisN11}) and satisfies $b(0)=f(0)=0$,
    $a_1(0)=a_2(0)=c(0)=a_0$, and $b'''(0)=\tfrac{q}{a_0^2}$.
    Moreover, we have $a_1(t)=a_2(t)$ for all values of $t$.

    \item Let $k=l=1$. For any $a_0\in\mathbb{R}\setminus\{0\}$
    and $q\in\mathbb{R}$ there exists a unique $SU(3)$-invariant
    parallel Spin($7$)-structure $\Omega$ on a tubular simultaneous of
    $\mathbb{CP}^2$ which is determined by a basis of type
    (\ref{CBasisN11}) and satisfies $b(0)=f(0)=0$,
    $a_1(0)=-a_2(0)=c(0)=a_0$, $b'(0)=1$, and $c''(0)=\tfrac{q}{a_0}$.
\end{enumerate}
\end{Th}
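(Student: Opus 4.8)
The plan is to handle all four items in parallel by the power-series method already used in the two preceding sections, since the mechanism is identical; only the relevant system of ODEs, the way it degenerates at the singular orbit, and the set of free parameters change. First I would record that a cohomogeneity-one $\mathrm{Spin}(7)$-structure of the stated type is governed by (\ref{HolRedNkl}) in item 1 and by (\ref{HolRedN11}) in items 2--4, and that the topology of the singular orbit $\mathbb{CP}^2$ forces exactly the initial data isolated before the theorem: $a(0)=f(0)=0$ together with $b(0)^2=c(0)^2$ in case 1; $a_1(0)=a_2(0)=f(0)=0$ and $b(0)^2=c(0)^2$ in case 2; $b(0)=f(0)=0$ with $c(0)^2=a_1(0)^2=a_2(0)^2$ in cases 3 and 4, the two subcases being $a_1(0)=a_2(0)$ and $a_1(0)=-a_2(0)$. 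Using the symmetries $(c,f)\mapsto(-c,-f)$ of (\ref{HolRedN11}), the analogous symmetry of (\ref{HolRedNkl}), and the sign symmetry interchanging $a_1(0)$ and $a_2(0)$, I would normalize to the initial data appearing in (\ref{PowSerS3Zkl})--(\ref{PowSerN11S3C}).

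The core step is the recursion. Substituting the ansatz $\sum_m g_m t^m$ into the relevant first-order system and clearing the denominators that vanish at $t=0$ yields, order by order, a linear equation $\mathcal{L}_m(g_m)=P_m(g_0,\dots,g_{m-1})$ on the space of $\mathrm{Ad}_{U(1)_{k,l}}$-invariant diagonal symmetric forms. Because the system degenerates at $\mathbb{CP}^2$, $\mathcal{L}_m$ fails to be invertible for finitely many $m$; the computation to carry out is to show that its kernel is nonzero precisely at the single order producing the advertised free parameter — third order for $f$ in item 1, for $a_1,a_2$ in item 2, for $b$ in item 3, and second order for $c$ in item 4 — and that $\mathcal{L}_m$ is invertible for every other $m$, so that $g_m$ is then uniquely determined by $g_0,\dots,g_{m-1}$. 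This gives, for each choice of $b_0$ (resp.\ $a_0$) and the free parameter(s) $q$, $q_1$, $q_2$, a unique formal solution whose first terms are exactly (\ref{PowSerS3Zkl})--(\ref{PowSerN11S3C}). I expect this bookkeeping — pinning down the orders at which free parameters enter and showing there are no others — to be the main obstacle, since it is precisely where the degeneracy of the holonomy equations at $\mathbb{CP}^2$ has its only real effect, and a miscount would alter the parameter counts in the statement.

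Next I would verify the four smoothness conditions of \cite{Rei} quoted above the theorem. The first two are immediate from the leading coefficients: $g_t$ converges as $t\to 0$ to the degenerate, $SU(3)$-invariant Fubini--Study form on $\mathbb{CP}^2$, and the normalizations $|a'(0)|=1$, $|f'(0)|=|2\Delta/(k+l)|$ (case 1), $|a_1'(0)|=|a_2'(0)|=1$, $|f'(0)|=3$ (case 2), $|b'(0)|=1$, $|f'(0)|=6$ (cases 3--4) are exactly what the collapsing-sphere curvature condition $\tfrac1t+O(1)$ demands, which one checks by comparing with the round metric through the covering $SU(2)\to K/U(1)_{k,l}$. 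The remaining two conditions — horizontal coefficients even, vertical coefficients odd — follow from uniqueness together with the $t\mapsto -t$ symmetries of the systems listed in the enumerated display preceding this sketch: each such symmetry carries the normalized initial value problem to itself, hence fixes its unique power-series solution, forcing the required parity of $a_1,a_2,b,c,f$. The same argument with the symmetry $(b,c)\mapsto(c,b)$ of (\ref{HolRedN11}) (resp.\ $(a_1,a_2)\mapsto(a_2,a_1)$) and uniqueness yields $b(t)=c(t)$ in item 2 and $a_1(t)=a_2(t)$ in item 3.

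Finally I would treat convergence and holonomy. Convergence near the singular orbit follows from Theorem \ref{EinsteinCohom1Solution} whenever Assumption \ref{WEschAssumpt} holds; in the two exceptional situations — $k=l=1$ with $b(0)=f(0)=0$, and $(k,l)=(1,0)$ with $a(0)=f(0)=0$, where $\mathfrak{p}$ and $\mathfrak{p}^\perp$ share a trivial $U(1)_{k,l}$-submodule — I would instead invoke Remark \ref{WEschThmRem}.\ref{ConvRem}: restricting to diagonal metrics keeps the Picard iteration of \cite{Esch} inside $S^2(\mathfrak{p})\oplus S^2(\mathfrak{p}^\perp)$, using $g(\tfrac{\partial}{\partial t},v)=0$ and $\mathrm{Ric}(\tfrac{\partial}{\partial t},v)=0$, so that the convergence argument of \cite{Esch} applies verbatim. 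For the holonomy claim in item 1, where $N^{k,l}$ is generic, I would reuse the argument of Section 4: the set of $SU(3)$-invariant cocalibrated $G_2$-structures with a given associated metric is discrete, hence there is no nontrivial one-parameter family of invariant parallel $\mathrm{Spin}(7)$-structures, so by Lemma \ref{SU4Lemma} the holonomy cannot reduce to $SU(4)$ or a smaller group; and a parallel vector field would have to be a constant multiple of $\tfrac{\partial}{\partial t}$, forcing $a'=b'=c'=f'\equiv 0$ and exhibiting $M$ as a Riemannian product with $N^{k,l}$, which is impossible since $N^{k,l}$ is not a torus. Hence the holonomy is exactly $\mathrm{Spin}(7)$.
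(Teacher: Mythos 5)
Your proposal matches the paper's proof closely: the same four initial value problems are isolated from the singular-orbit topology, the same power series ansatz identifies the free parameters ($q$ at third order in $f$, $q_1,q_2$ in $a_1,a_2$, $q$ in $b$, $q$ at second order in $c$), the same four smoothness conditions from \cite{Rei} are verified via the $t\mapsto -t$ and $(b,c)\mapsto(c,b)$ (resp.\ $(a_1,a_2)\mapsto(a_2,a_1)$) symmetries together with uniqueness, and convergence is handled by Theorem \ref{EinsteinCohom1Solution} supplemented by the diagonal-metric argument of Remark \ref{WEschThmRem} in the two cases where Assumption \ref{WEschAssumpt} fails. The only cosmetic difference is that for the holonomy claim in item 1 you re-derive the argument whereas the paper simply cites the result already established as part of Theorem \ref{NklThm1}(3a); the content is the same.
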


\begin{Rem}
\begin{enumerate}
    \item The first class of metrics from the above theorem was considered
    by Cveti\v{c} \cite{Cve} and by Kanno, Yasui \cite{KanI}. The second
    class of metrics was discovered independently of the
    author by Bazaikin \cite{Baz1}. We have interpreted the
    parameters on which these metrics depend in terms of
    the two initial conditions $q_1$ and $q_2$ of
    third order. Moreover, we have proven that no further metrics
    of this kind with $b(t)\neq c(t)$ exist. In the second
    paper of Kanno, Yasui \cite{KanII}, a power series ansatz for the
    third and the fourth class of metrics was made. However,
    our proofs of the smoothness and the convergence of the power series
    are new.
    \item In the cases where $N^{k,l}$ is generic or $k=l=1$ and $a_1(0)
    =a_2(0)=f(0)=0$ Assumption \ref{WEschAssumpt} is satisfied. By calculating
    $W_2^v/W_0^v$ we see that the free parameters $q$, $q_1$, $q_2$ are
    indeed a subset of the free parameters from Theorem
    \ref{EinsteinCohom1Solution}. Although we do not know if
    Assumption \ref{WEschAssumpt} is necessary for Theorem
    \ref{EinsteinCohom1Solution} to be true, we can make a similar
    observation in the other two cases from the above theorem. All in
    all, we hope to have shed some light on the origin of these
    parameters.
\end{enumerate}
\end{Rem}

\end{document}